\documentclass[11pt,a4paper]{amsart}

\usepackage[left=3cm, top=3cm,bottom=3cm,right=3cm]{geometry}

\usepackage{mathtools,amssymb,amsthm,mathrsfs,calc,graphicx,stmaryrd,xcolor,dsfont,pgfplots,bbm,float,array,epsfig,color}
\usepackage[numbers,square]{natbib}
\usepackage[british]{babel}
\usepackage{amsfonts,amsmath,amsthm,amssymb}
\usepackage{verbatim}
\usepackage{multirow}
\usepackage{tikz}
\usepackage{bbm}

\pdfminorversion=4

\numberwithin{equation}{section}
\numberwithin{figure}{section}

\newtheorem{thm}{Theorem}[section]
\newtheorem{lem}[thm]{Lemma}

\newtheorem{prop}[thm]{Proposition}

\theoremstyle{remark}

\theoremstyle{definition}

\numberwithin{equation}{section}

\def\P{{\mathbb{P}}}
\def\Z{{\mathbb{Z}}}
\def\mz{{\mathbb{Z}}}
\def\R{{\mathbb{R}}}

\def\F{{\mathcal{F}}}

\newcommand{\od}{\overset{{\rm d}}{=}}
\newcommand{\dod}{\overset{{\rm d}}{\longrightarrow}}
\newcommand{\topr}{\overset{\mathbb{P}}{\longrightarrow}}
\newcommand{\tov}{\overset{{\rm v}}{\longrightarrow}}

\newcommand{\PP}{\mathbb{P}}
\newcommand{\E}{\mathbb{E}}
\newcommand{\me}{\mathbb{E}}

\newcommand{\G}{\mathcal{G}}
\newcommand{\X}{\mathcal{X}}
\renewcommand{\P}{\mathbb{P}}
\newcommand{\NN}{\mathbb{N}}
\newcommand{\N}{\mathbb{N}}

\newcommand{\ZZ}{\mathbb{Z}}
\newcommand{\W}{\mathbb{W}}
\newcommand{\Zc}{Z^{\rm crit}}
\newcommand{\Wc}{W^{\rm crit}}

\newcommand{\8}{\infty}

\renewcommand{\a}{\alpha}

\renewcommand{\l}{\lambda}

\begin{document}

\title[Random walks in a strongly sparse random environment]{Random walks in a strongly sparse random environment}

\author[D. Buraczewski, P. Dyszewski, A. Iksanov and A. Marynych]{Dariusz Buraczewski, Piotr Dyszewski, Alexander Iksanov and~Alexander~Marynych}

\date{}

\begin{abstract}
The integer points (sites) of the real line are marked by the
positions of a standard random walk. We say that the set of marked
sites is weakly, moderately or strongly sparse depending on
whether the jumps of the standard random walk are supported by a
bounded set, have finite or infinite mean, respectively. Focussing
on the case of strong sparsity we consider a nearest neighbor
random walk on the set of integers having jumps $\pm 1$ with
probability $1/2$ at every nonmarked site, whereas a random drift
is imposed at every marked site. We prove new distributional limit
theorems for the so defined random walk in a strongly sparse
random environment, thereby complementing results obtained
recently in Buraczewski et al. (2018+) for the case of moderate
sparsity and in Matzavinos et al. (2016) for the case of weak
sparsity. While the random walk in a strongly sparse random
environment exhibits either the diffusive scaling inherent to a
simple symmetric random walk or a wide range of subdiffusive
scalings, the corresponding limit distributions are non-stable.
\end{abstract}

\keywords{Branching process in a random environment with
immigration, convergence in distribution, random walk in a random
environment, sparse random environment} \subjclass[2010]{Primary:
60K37; Secondary: 60F05, 60F15, 60J80.}

\maketitle

\section{Introduction}\label{intro}

A simple random walk (SRW) on the set $\Z$ of integers is one of
the most fundamental objects in both classical and modern
probability. We consider a slightly perturbed version of SRW
obtained by imposing a random drift at some randomly chosen
(marked) sites. Allowance is made for occasional huge gaps between
the marked sites which thus form a rather sparse subset of $\Z$.
Our main purpose is to reveal new effects generated by this
extreme sparsity which are absent in \cite{kesten1975limit} and
\cite{BurDysIksMarRoi:2018+,matzavinos2016random}. While the
former article treats the nonsparse case in which random drifts
are imposed at each site, the latter two papers are concerned with
a sparse situation like here, the only difference being that
enormous gaps between the marked sites are prohibited. Now we
define the model in focus more precisely, as a particular random
walk in a random environment (RWRE). Also, we discuss its relation
to a multi-skewed Brownian motion and a one-dimensional trap
model.

We first recall the definition of a general RWRE.
Let $\Omega=(0,1)^\Z$ be the set of all possible environments
equipped with the corresponding Borel $\sigma$-algebra $\F$ and a
probability measure $P$. A random element
$\omega=(\omega_n)_{n\in\Z}$ defined on $(\Omega, \F, P)$ is
called {\it random environment}. A random walk in a random
environment $\omega$ is a nearest neighbor random walk $X =
(X_n)_{n \in \N_0}$ (here and hereafter, $\N_0 = \N \cup \{ 0 \}$)
on $\Z$. To define its transition probabilities we first set
$\mathcal{X} = \Z^{\N_0}$ and equip it with a Borel
$\sigma$-algebra $\mathcal{G}$. Plainly, $\mathcal{X}$ can be
thought of as the set of trajectories of $X$. Now, given $\omega$,
let $\P_\omega$ be a~ (quenched) probability measure on
$\mathcal{X}$ such that $X_0=0$ $\P_\omega$- almost surely (a.s.)
and
$$\P_\omega \{X_{n+1}=j| X_n = i\} = \left\{
\begin{array}{cl}
\omega_i, & \mbox{if } j=i+1,\\
1- \omega_i,\ & \mbox{if } j=i-1,\\
0, & \mbox{otherwise.}
\end{array}\right.$$
Clearly, under $\P_\omega$, $X$ is a time-homogeneous Markov chain
on $\Z$. The randomness of the environment $\omega$ influences
significantly various properties of $X$. In view of this, it is
quite natural to investigate the behavior of $X$ under the
annealed measure $\P$ which is defined as a unique probability
measure on $(\Omega\times \X, \F\otimes \G)$ satisfying
$$\P\{F\times G\} = \int_F \P_{\omega}\{G\} P({\rm d}\omega),\quad F\in \F,\quad G\in \G.$$

After these preparations we are ready to define the object of our
interest in the present paper. Denote by
$((\xi_k,\lambda_k))_{k\in \Z}$ a sequence of independent copies
of a random vector $(\xi,\lambda)$, where $\lambda \in (0,1)$ and
$\xi \in \N$ $\P$-a.s. Setting
$$S_n = \left\{ \begin{array}{cl}
\sum_{k=1}^n \xi_k, & \mbox{if } n>0, \\
0, & \mbox{if } n=0, \\
-\sum_{k=n+1}^{ 0} \xi_k, & \mbox{if } n<0.
\end{array} \right.$$
we define a specific random environment $\omega = (\omega_n)_{n \in \Z} \in \Omega$ by
\begin{equation}\label{eq:sparse}
\omega_n = \left\{
\begin{array}{ll}
\l_{k+1}, \ & \mbox{if $n=S_k$ for some }k\in\Z,\\
1/2, \ & \mbox{otherwise.}
\end{array}\right.
\end{equation}
Thus, the sequence $(S_n)_{n \in \Z}$ determines the marked sites in which the random drifts $\lambda_{k+1}$ are placed. Since for the nonmarked sites $n$ (that is, for most of sites) $\omega_n=1/2$, it is natural to call $\omega$ a {\it sparse random environment}. Following \cite{matzavinos2016random} we use the term {\it random walk in sparse random environment} (RWSRE) for $X$ as defined above with $\omega$ being a sparse random environment. We call the environment $\omega$ {\it moderately sparse} or {\it strongly sparse} depending on whether $\E\xi<\infty$ or
\begin{equation}\label{eq:infi}
\E\xi=\infty.
\end{equation}
While the case of moderate sparsity was analyzed in the recent
article \cite{BurDysIksMarRoi:2018+}, the case of strong sparsity
is investigated in the present work. In particular,
\eqref{eq:infi} is our main standing assumption.

The behavior of any RWRE is affected by both randomness of the
environment and randomness of the walk given the environment. The
earlier works \cite{BurDysIksMarRoi:2018+, kesten1975limit,
matzavinos2016random} demonstrate that in the nonsparse case
randomness of the environment has dominating effect. A remarkable
feature of the sparse case is that randomness of the environment
and randomness of the walk may contribute to a comparable extent.
Another source of the new effects arising in the sparse case are
the properties of the environment alone which are essentially
different from those in the nonsparse case.

Since the early work \cite{solomon1975random} a general RWRE has
been attracting a~fair amount of attention among probabilistic
community. We refer to \cite{zeitouni2004} for a classical
introduction to the topic. In the literature one usually treats
the cases where the environment $\omega$ forms a~stationary
ergodic sequence or just a collection of independent identically
distributed (iid) random variables (note that in the present
article we go beyond these settings). There are numerous articles
which prove, under these assumptions, quenched and annealed
distributional limit
theorems~\cite{bouchet2016quenched,dolgopyat2012quenched,enriquez2009limit,kesten1986limit,kesten1975limit,sinai1982limit,sznitman1999law}
and investigate large
deviations~\cite{buraczewski2017precise,comets2000quenched,dembo1996tail,
gantert1998quenched,greven1994large,pisztora1999large,pisztora1999precise,varadhan2003large,zerner1998lyapounov}.
The list above is far from being complete.

Further, we discuss a relation of RWSRE to two
other models. Following \cite{ramirez2011multi} we recall that a
multi-skewed Brownian motion evolves like a standard Brownian
motion with the exception of some deterministic sites (interfaces)
$(s_k)_{k \in \ZZ}$ in which some additional skewness
(perturbation) is imposed. Thus, the RWSRE can be seen as a
discrete analogue of a multi-skewed Brownian  motion with random
interfaces and random perturbations. To demonstrate a connection
to the other model we only observe the walker at the marked sites.
To be more precise, set
\begin{equation}\label{eq:trap}
\widehat{X}_0:=0,\quad \widehat{X}_n:=
\begin{cases}
\widehat{X}_{n-1},& \text{if } X_n\notin \{S_k:k\in\Z\},\\
k,& \text{if } X_n=S_k \text{ for some }k\in\Z;
\end{cases}
\quad n\in\N.
\end{equation}
and note that $\widehat{X}_n$ is the index of the last marked site
visited by the walker up to time $n$. The sequence $\widehat{X}=
(\widehat{X}_k)_{k\in\N_0}$ is a nearest-neighbor random walk on
$\Z$ in a random environment which has a positive probability to
stay immobile at any time. Thus, $\widehat{X}$ is a discrete
variant of a one-dimensional trap model
~\cite{Arous+Cherny:2005,Fontes+Isopi+Newman:2002,Zindy:2009}. The
setting of \cite{Zindy:2009} is closely related to that of the
present article. To justify the claim, assume that the
distribution of $\xi$ is heavy-tailed in the sense that
$\P\{\xi>t\}\sim t^{-\beta}$ as $t\to\infty$ for some $\beta\in
(0,1)$. Using the solution to gambler's ruin problem enables us to
calculate the transition probabilities of $\widehat X$ explicitly
and then conclude that $\P\{\tau>t\}$ is proportional to
$\P\{\xi^2>t\}$, where $\tau$ is the time of the first jump from a
given state (trapping time). One-dimensional trap models with
heavy-tailed trapping times are analyzed in \cite{Zindy:2009}. In
particular, it is shown that the corresponding nearest-neighbor
continuous-time Markov process, properly scaled, converges weakly
in the Skorokhod space to an inverse $\beta$- stable subordinator.
On the other hand, let us note right away that the assertions of
the present article cannot be derived from those in
\cite{Zindy:2009}. The explanation is simple: the evolution of $X$
between the marked sites is extremely important; thus, restricting
attention to the marked sites only leads to an essential loss of
information.

The article is organized as follows. In Section~\ref{sec:main} we
present our main results and review some earlier results which are
particularly relevant to ours. In Section~\ref{sec:branching} we
recall the construction of a branching process associated with
$X$. In Section \ref{sec:strategy} we explain our proof strategy.
In Section~\ref{sec:tails} several important auxiliary facts are
established. Finally, Section \ref{sec:proof} contains the proofs
of our main results.

\section{Main results}\label{sec:main}

\subsection{Preliminaries}\label{prelim}

In the paper \cite{matzavinos2016random} the authors address the
question of transience and recurrence of RWSRE and prove a strong
law of large numbers and some distributional limit theorems for
$X$. Put
\begin{equation*}
\rho = \frac{1-\lambda}{\lambda}.
\end{equation*}
According to Theorem 3.1 in \cite{matzavinos2016random}, $X$ is
$\P$-a.s.~transient to $+\infty$ whenever
\begin{equation}\label{eq:right_transience}
\E \log \rho \in [-\infty, 0)
\quad \text{and}\quad \E\log \xi<\infty
\end{equation}
(note that the first inequality in \eqref{eq:right_transience} excludes the degenerate case $\rho = 1$ a.s.\ in which $X$ becomes a simple random walk).
Under \eqref{eq:right_transience}, the RWSRE also satisfies a strong law of large numbers, that is,
\begin{equation}\label{eq:LLN}
\frac{X_n}{n} \to v \quad \PP-a.s.
\end{equation}
where
\begin{equation*}
v = \left\{ \begin{array}{cl}   \frac{(1 - \E \rho) \E \xi}{ (1-\E \rho) \E\xi^2 + 2 \E \rho \xi\E \xi} & \mbox{if  $\E \rho <1$,
$\E \rho\xi<\infty$ and $\E \xi^2<\infty$} \\
0 & \mbox{otherwise}     \end{array} \right. ,
\end{equation*}
see Theorem 3.3  in \cite{matzavinos2016random} and Proposition 2.1 in~\cite{BurDysIksMarRoi:2018+}.

We note right away that conditions \eqref{eq:infi}
and \eqref{eq:right_transience} are satisfied under the conditions
of our main results. Thus, the random walks in a sparse random
environment that we treat here are transient to the right with
zero asymptotic speed ($v=0$).

\subsection{Notation}

To state our main results we need more notation. Set
\begin{equation*}
T_n =\inf\{k\geq 0: X_k = n \},\quad n\in\mz.
\end{equation*}
We are going to derive limit theorems for $X_n$ from those for
$T_n$ via a standard inversion technique. It will become clear in
Section~\ref{sec:branching} that the stopping times $T_n$ are
easier to deal with, for, unlike $X_n$, these can be analyzed with
the help of an auxiliary branching process.

Now we formulate an assumption concerning the distribution of $\xi$:
\begin{itemize}\itemsep3mm
\item[($\xi$)] there exists $\beta\in (0,1]$ and a slowly varying function $\ell$ such that
\begin{equation}\label{eq:reg_var_xi}
\P\{ \xi > t \} \sim t^{-\beta} \ell (t) , \quad t \to \infty.
\end{equation}
and $\E \xi=\infty$ when $\beta=1$ ($\E\xi=\infty$ holds automatically when $\beta\in (0,1)$).
\end{itemize}
Further, we point out two sets of assumptions regarding the distribution of $\rho$:
\begin{itemize}\itemsep3mm
        \item[($\rho 1$)] $\E \rho^{\alpha}=1$ for some
        $\alpha>0$, $\E \rho^{\gamma}<\infty$ for some $\gamma>\alpha$ and the distribution $\log \rho$ is nonarithmetic;
        \item[($\rho 2$)] there exists an open interval $\mathcal{I}\subset (0,\infty)$ such that $\E \rho^x<1$ for all $x\in \mathcal{I}$.
\end{itemize}
Note that ($\rho 1$) and ($\rho 2$) are not disjoint because
($\rho 1$) implies ($\rho 2$) with $\mathcal{I}\subset
(0,\alpha)$.

To ease the presentation we shall state separately our results for $\beta\in (0,1)$ and $\beta =1$ because the latter case is technically more involved.

\subsection{Results for $\beta\in (0,1)$}\label{beta<1}

We shall need two assumptions concerning the joint distribution of $(\xi,\rho)$:
\begin{itemize}\itemsep3mm
\item[($\xi\rho 1$)] $\lim_{t\to\infty}\frac{\P\{\xi>t^{1/2},\, \rho>c_1(t)\}}{\P\{\xi>t\}}=0$,
where $c_1(t)=t$;

\item[($\xi\rho 2$)] $\lim_{t\to\infty}\frac{\P\{\xi>t^{\alpha/\beta},\, \rho>c_2(t)\}}{\P\{\xi>t\}}=0$, where
$c_2(t):=t^{-1}\P\{\xi>t\}^{-1/\alpha}$. For the
most part, $\alpha$ is supposed to be the same as in ($\rho 1$).
But occasionally we allow $\alpha$ to be any positive number
satisfying $\alpha\leq \beta/2$.
\end{itemize}
An application of Markov's inequality reveals that under $(\rho1)$
with $\alpha=\beta/2$ or $(\rho2)$ with $\beta/2\in\mathcal{I}$
condition ($\xi\rho1$) holds whenever $\xi$ and $\rho$ are
independent. Similarly, under $(\rho1)$ with $\alpha\in
(0,\beta/2]$ condition ($\xi\rho2$) holds provided that $\xi$ and
$\rho$ are independent. Further, it is clear that, for $i=1,2$,
$\P\{\rho>c_i(t)\} = o(\P\{\xi>t\})$ is a sufficient condition for
($\xi\rho\,{\rm i}$) which is far from being necessary.

Denote by $\vartheta$ a positive random variable with Laplace
transform
\begin{equation}\label{varthe}
\E\exp(-s\vartheta)=\frac{1}{\cosh\sqrt{s}},\quad s\geq 0.
\end{equation}
Define the measure $\mu$ on $\mathbb{K}:=[0,\infty]^2\setminus  \{
(0,0) \}$ by
\begin{equation}\label{mu}
\mu\{(u,v)\in \mathbb{K}: u>x_1~\text{or}~v>x_2\}=x_1^{-\beta}+\mathcal{C}_\mu x_2^{-\beta/2}-\E\min(x_1^{-\beta}, x_2^{-\beta/2}\vartheta^{\beta/2})
\end{equation}
for $ x_1, x_2>0$, where $\mathcal{C}_\mu>0$ is a constant to be
specified later. Let $N:= \sum_k \delta_{(t_k,\, {\bf j}_k)}$ be a
Poisson random measure on $[0,\infty)\times \mathbb{K}$ with
intensity measure ${\rm LEB}\otimes \mu$. Here, $\delta_{(t,{\bf
x})}$ is the probability measure concentrated at $(t, {\bf x})\in
[0,\infty)\times \mathbb{K}$, and ${\rm LEB}$ is the Lebesgue
measure on $[0,\infty)$. Set
\begin{equation}\label{lt}
{\bf L}(t):=(L_1(t), L_2(t))=\sum_{ k \: : \: t_k\leq t}{\bf
j}_k,\quad t\geq 0.
\end{equation}
Lemma~\ref{levymeasure} given in Section~\ref{sec:proof} secures
\begin{equation}\label{ineq}
\int_{|x|\neq 0}(|{\bf x}|\wedge 1) \: \mu({\rm d}{\bf x})<\infty,
\end{equation}
where $|{\bf x}|=\sqrt{x_1^2+x_2^2}$ for $x=(x_1, x_2)\in \R^2$.
This ensures that the series on the right-hand side of \eqref{lt}
converges a.s. Furthermore, $({\bf L}(t))_{t\geq 0}$ is a
two-dimensional (non-stable) L\'{e}vy process with the L\'{e}vy
measure $\mu$. Its components $L_1$ and $L_2$ are dependent
drift-free stable subordinators with parameters $\beta$ and
$\beta/2$, respectively. Put
$$L_1^\leftarrow(t):=\inf\{s\geq 0: L_1(s)>t\},\quad t\geq 0.$$ The process $(L_1^\leftarrow(t))_{t\geq 0}$ is known in the literature as the inverse
$\beta$-stable subordinator. Set
$$\chi:=L_2(L_1^\leftarrow(1)-)+\vartheta
(1-L_1(L_1^\leftarrow(1)-))^2,$$ where $\vartheta$ is assumed to
be independent of ${\bf L}$.

Our first result, Theorem \ref{thm:main11T}, provides a distributional limit theorem for $X$ in the situation where the distribution of $\xi$ plays a dominant role, a contribution of the distribution of $\rho$ being small.
\begin{thm}\label{thm:main11T}
Let $\beta\in (0,1)$ and assume that one of the
following sets of  conditions is satisfied:
\begin{itemize}
\item (A) ($\xi$) and ($\rho 2$) with $\beta/2 \in {\mathcal I}$ hold;
\item (B1) ($\xi$) holds with $\ell$ such that $\lim_{t\to\infty}\ell (t)=\infty$, and ($\rho 1$) holds with $\alpha = \beta/2$;
\item (B2) ($\xi$) holds with $\ell$ such that $\mathcal{C}_\ell:=\lim_{t\to\infty}\ell (t)\in (0,\infty)$,
and ($\rho 1$) holds with $\alpha = \beta/2$.
\end{itemize}
Further, assume that condition $(\xi\rho 1)$ holds and that
$\E(\rho\xi)^{\beta/2+\varepsilon}<\infty$ for some
$\varepsilon>0$. Then
\begin{equation}\label{Tn}
\frac{T_n}{n^2}~\dod~2\chi,\quad n\to\infty
\end{equation}
and
\begin{equation}\label{Xn}
\frac{X_n}{n^{1/2}}~\dod~(2\chi)^{-1/2},\quad n\to\infty.
\end{equation}
The constant $\mathcal{C}_\mu$ in \eqref{mu} is given as follows:
\begin{equation*}
\mathcal{C}_\mu = \left\{ \begin{array}{cl} \E\vartheta^{\beta/2} & \mbox{in the cases (A) and (B1)} \\
\mathcal{C}_\ell \E\vartheta^{\beta/2}+\mathcal{C}_Z(\beta/2) &
\mbox{in the case (B2)}
\end{array} \right. ,
\end{equation*}
where the constant $\mathcal{C}_Z(\beta/2)$ is
specified in Lemma~\ref{prop:main2}.
\end{thm}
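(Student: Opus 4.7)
The plan is to analyze $T_n$ via the branching-process representation recalled in Section \ref{sec:branching}, combined with a two-dimensional point-process convergence argument that builds $\mathbf{L}$ out of the sparse environment. The hitting time $T_n$ is decomposed as a sum of block contributions, one for each marked-site gap $(S_{k-1}, S_k]$ traversed before the first visit to level $n$. A gap of length $\xi_k$ contributes on two scales: a \emph{bulk} diffusive time of order $\xi_k^2$, arising from SRW fluctuations inside the gap (where the environment is $1/2$), and a \emph{drift} time of order $\xi_k$, controlled by the local perturbation $\lambda_k$, equivalently by $\rho_k$. These two scales correspond to the two coordinates of $\mathbb{K}$, with stable indices $\beta$ (from the tail of $\xi$) and $\beta/2$ (from the tail of $\xi^2$).

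First, I would show that the exit time of a simple symmetric random walk from a segment of length $N$, rescaled by $N^2$, converges in distribution to $\vartheta$; this is a standard Brownian scaling identity that explains the appearance of $\vartheta$ in both $\chi$ and $\mathcal{C}_\mu$. Combined with the branching representation, this reduces $T_n$, up to a negligible remainder, to $2\sum_{k=1}^{K(n)}(\xi_k^2\vartheta_k + \xi_k\eta_k) + R_n$, where $K(n):=\max\{k\geq 0: S_k\leq n\}$, the $\vartheta_k$ are i.i.d.\ copies of $\vartheta$ (asymptotically independent of $\xi_k$), $\eta_k$ is a $\rho_k$-dependent drift coefficient quantifying the cost of crossing the $k$-th marked site, and $R_n$ captures the last, partially traversed gap plus lower-order terms. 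The moment hypothesis $\E(\rho\xi)^{\beta/2+\eps}<\infty$ together with $(\rho 1)$ or $(\rho 2)$ controls the tail of $\xi_k\eta_k$: it is subordinate to that of $\xi_k^2\vartheta_k$ in cases (A) and (B1), and of the same order in case (B2), which is exactly the source of the additional constant $\mathcal{C}_Z(\beta/2)$ in $\mathcal{C}_\mu$.

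Next, I would establish joint point-process convergence of the appropriately normalised point measures to the Poisson random measure $N$ on $[0,\infty)\times \mathbb{K}$ with intensity $\mathrm{LEB}\otimes \mu$; integrability \eqref{ineq} is supplied by Lemma \ref{levymeasure}, and hypotheses $(\xi\rho 1)$, $(\xi\rho 2)$ control the coupling between the two coordinates so that the limiting L\'evy measure is exactly $\mu$. This upgrades to functional convergence of the bivariate partial-sum process to $\mathbf{L}$. Reading off $T_n/n^2$ then amounts to inverting the first coordinate: the event $S_{K(n)}\leq n<S_{K(n)+1}$ corresponds in the scaling limit to the random time $L_1^\leftarrow(1)$, so the fully traversed blocks contribute $2L_2(L_1^\leftarrow(1)-)$, while the residual last gap, of limiting normalised length $1-L_1(L_1^\leftarrow(1)-)$, yields an additional $2\vartheta(1-L_1(L_1^\leftarrow(1)-))^2$; altogether this gives \eqref{Tn}. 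Finally, \eqref{Xn} follows by the standard inversion $\{X_n\geq m\}=\{T_m\leq n\}$ together with the a.s.\ strict positivity and continuity of $\chi$ at the relevant argument.

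The main obstacle is the joint point-process step: one must verify that the normalised vector $(\xi_k, \xi_k^2\vartheta_k+\xi_k\eta_k)$ has L\'evy measure exactly $\mu$, which is delicate because $\xi$ and $\rho$ are allowed to be dependent. Hypothesis $(\xi\rho 1)$ kills the dominant source of correlation at the coarse scale $c_1$, $(\xi\rho 2)$ does the same at the finer scale $c_2$ where $\rho$-driven atoms compete with the $\xi^2$-driven ones, and the trichotomy (A)/(B1)/(B2) precisely reflects how the drift-term tail $\P\{\xi\eta>t\}$ compares with $\P\{\xi^2\vartheta>t\}$. A secondary technical difficulty is the treatment of the overshoot term: one needs to verify that $1-L_1(L_1^\leftarrow(1)-)$ is a.s.\ strictly positive (hence a continuity point of $L_1$) and that, conditionally on the limiting point process, the rescaled SRW exit time inside the last block delivers a fresh copy of $\vartheta$ independent of $\mathbf{L}$.
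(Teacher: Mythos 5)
Your overall strategy is the right one -- branching-process representation, regular variation of the block statistics, bivariate point-process convergence to a L\'evy process, and inversion via the inverse $\beta$-stable subordinator -- and your reading of where the two scales $\beta$ and $\beta/2$ and the random variable $\vartheta$ come from is accurate. However, there is a genuine gap in the core of your argument, and it stems from the level at which you decompose $T_n$.

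You decompose $T_n$ \emph{per gap} $(S_{k-1},S_k]$ and claim it reduces, up to negligible terms, to $2\sum_{k\le K(n)}(\xi_k^2\vartheta_k+\xi_k\eta_k)+R_n$ with $\vartheta_k$ i.i.d.\ copies of $\vartheta$, feeding this into a point-process limit theorem. But the per-gap contributions are \emph{not} i.i.d., nor even asymptotically so: the branching population $\mathbb{Z}_{k-1}=Z_{S_{k-1}}$ surviving into gap $k$ creates carryover dependence, and the contribution of this carryover (captured in the paper by $\W_k^\downarrow$ and $\mathbb{Z}_k$) is not a lower-order error -- it supplies precisely the additive constant $\mathcal{C}_Z(\beta/2)$ in $\mathcal{C}_\mu$ in case (B2). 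So any attempt to write the $k$-th gap's contribution as a function of $(\xi_k,\rho_k,\vartheta_k)$ alone already discards the very mechanism that differentiates cases (B1) and (B2). The paper's workaround is essential and absent from your sketch: one regenerates the branching process at the extinction times $\tau_k$ of $(\mathbb{Z}_j)$ defined in \eqref{eq:tau_def}, so that the vectors $\bigl(S_{\tau_k}-S_{\tau_{k-1}},\,\overline{\W}_{\tau_k}\bigr)$ \emph{are} genuinely i.i.d.\ under $\P$, and the joint tail asymptotics of $(S_{\tau_1},\overline{\W}_{\tau_1})$ (established in Lemma \ref{future}, Lemma \ref{prop:tail_main} and Proposition \ref{prop:wp1}) is what identifies the L\'evy measure $\mu$. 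Without this regeneration the point-process convergence (via Resnick--Greenwood type results, which require i.i.d.\ summands) cannot be invoked.

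A second, smaller, misreading concerns the role of the hypothesis $(\xi\rho 1)$. You interpret it as controlling the coupling between coordinates in the limiting point process, and you also invoke $(\xi\rho 2)$, which is not assumed in this theorem. In fact, $(\xi\rho 1)$ enters through Lemma \ref{lem:rhonu}(i) and Lemma \ref{negligible(01)}(i), where it is used to show $\rho_{\nu(n)}/n\to 0$ in probability and hence that the cross-term $Y_n=\sum_{j\le S_{\nu(n)-1}}(U_j^{(n)}-U_j^{(S_{\nu(n)-1})})$ -- the time the walk, while traversing the last partial gap, leaks back into the earlier blocks -- is $o_\P(n^2)$. This backward-leakage term is part of what you lumped into $R_n$ without explanation, and it is the only place this hypothesis is needed; the coupling of the two coordinates of the L\'evy measure is handled instead by Proposition \ref{prop:wp1}, which requires no such extra condition. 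Your remaining steps (the $D^2$ convergence, composition with $\tau^\ast_{\nu(n)-1}$, the Straka--Henry inversion producing $L_1^\leftarrow(1)$, and the residual gap delivering a fresh $\vartheta$ multiplied by $(1-L_1(L_1^\leftarrow(1)-))^2$) match the paper's execution once the regeneration structure is in place.
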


In Theorem \ref{thm:main11T} the condition
$\E\rho^\alpha=1$ may hold for any $\alpha>0$. In the situations
(B1) and (B2) (as well as in Theorem \ref{thm:main2T} given below)
it holds for $\alpha\in (0,\beta/2]$. Assuming that it holds for
$\alpha>\beta/2$ we conclude that ($\rho 2$) with $\beta/2\in
\mathcal{I}$ holds, so that the situation (A) prevails.

Here is a very informal explanation of why $n^2$
should be the correct normalization for $T_n$. Since the
distribution of $\xi$ dominates that of $\rho$, it is tempting to
assume, at least as a first approximation, that $\rho=1$ $\P$-a.s.
Then $X$ is a SRW, and the fact that $T_n/n^2$ converges in
distribution is well-known.

When $\xi$ and $\rho$ are independent, the conditions $(\xi\rho
1)$ and $\E(\rho\xi)^{\beta/2+\varepsilon}<\infty$ for some
$\varepsilon>0$ are secured by the other assumptions of Theorem
\ref{thm:main11T} (as for $(\xi\rho 1)$, see the discussion at the
beginning of Section \ref{beta<1}).

Theorem \ref{thm:main2T} given next is a counterpart of Theorem
\ref{thm:main11T} in which the distributions of $\xi$ and $\rho$
play comparable roles. To formulate it we need some additional
notation. Pick any $\alpha\in (0,\beta/2]$ and denote by
$\widehat{L}_1:=(\widehat{L}_1(t))_{t\geq 0}$ and
$(\widehat{L}_2(t))_{t\geq 0}$ {\it independent} drift-free
$\beta$- and $\alpha$-stable subordinators with the L\'{e}vy
measures $\nu_1$ and $\nu_2$ given by
$$\nu_1((x,\infty))=x^{-\beta},\quad \nu_2((x,\infty))=\mathcal{C}_Z(\alpha) x^{-\alpha}, \quad
x>0,$$ respectively (see Lemma \ref{prop:main2} for the definition
of $\mathcal{C}_Z(\alpha)$). Also, let
$(\widehat{L}_1^\leftarrow(t))_{t\geq 0}$ denote an inverse
$\beta$-stable subordinator which corresponds to $\widehat{L}_1$.
Finally, whenever $(\xi)$ holds we denote by $\lambda$ an
asymptotic inverse function for $s\mapsto
\P\{\xi>s\}^{-1/\alpha}$. This means that $\lambda$ satisfies
\begin{equation*} \lim_{t \to \infty} \frac{\PP\{\xi >
\lambda(t)\}^{-1/\alpha}}{t} = \lim_{t \to \infty} \frac{\lambda
(\PP\{ \xi > t \}^{-1/\alpha})}{t}=1.
\end{equation*}
Such a function $\lambda$ is uniquely determined up to asymptotic
equivalence by Theorem 1.5.12 in \cite{bingham1989regular}.
Moreover, it is regularly varying of index $\alpha/\beta$.
\begin{thm}\label{thm:main2T}
Let $\beta\in (0,1)$ and assume that one of the
following sets of  conditions is satisfied:
\begin{itemize}
\item (B3) ($\xi$) holds with $\ell$ such that $\lim_{t\to\infty}\ell (t)=0$, and ($\rho 1$) holds with $\alpha = \beta/2$;
\item (C) ($\xi$) holds, and ($\rho 1$) holds with $\alpha\in (0, \beta/2)$.
\end{itemize}
Further, assume that condition $(\xi\rho2)$ holds and that
$\E(\rho\xi)^{\alpha+\varepsilon}<\infty$ for some $\varepsilon>0$. Then
\begin{equation}\label{Tn2}
\P\{\xi>n\}^{1/\alpha}
T_n~\dod~2\widehat{L}_2(\widehat{L}_1^\leftarrow(1)),\quad
n\to\infty
\end{equation}
and
\begin{equation}\label{Xn2}
\frac{X_n}{\lambda(n)}~\dod~(2\widehat{L}_2(\widehat{L}_1^\leftarrow(1)))^{-\alpha/\beta},\quad
n\to\infty.
\end{equation}
\end{thm}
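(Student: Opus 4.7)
The plan is to prove~\eqref{Tn2} first and then deduce~\eqref{Xn2} from it by the standard duality $\{X_n\ge m\}=\{T_m\le n\}$ combined with $X_n\to+\infty$ $\P$-a.s.\ (a consequence of~\eqref{eq:right_transience}, which is implied by the hypotheses since $\E\rho^\alpha=1$ forces $\E\log\rho<0$) and the regular variation of $\lambda$ of index $\alpha/\beta$.

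For~\eqref{Tn2} I would rely on the branching-process-in-random-environment representation of $T_n$ recalled in Section~\ref{sec:branching}. Setting $K_n:=\inf\{k\ge 0:S_k\ge n\}$, this representation permits a decomposition
\begin{equation*}
T_n \;=\; \sum_{k=1}^{K_n}\eta_k \;+\; R_n,
\end{equation*}
where $\eta_k$ is the time contribution of the $k$-th inter-marked-site block (its traversal together with the excursions it provokes into previous blocks) and $R_n$ is a boundary term from the final partial block, to be shown negligible at scale $a_n:=\P\{\xi>n\}^{-1/\alpha}$. Conditionally on $(\xi_k,\rho_k)$ one can further split $\eta_k=W_k+E_k$, where $W_k$ is a \emph{walk-like} contribution of order $\xi_k^{2}$ and $E_k$ is an \emph{environmental} contribution: when $\rho_k$ is large the walker re-enters the block a geometric-in-$\rho_k$ number of times, each re-entry contributing an independent traversal of order $\xi_k^2$. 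A Kesten-type renewal analysis under~$(\rho 1)$---the content of Lemma~\ref{prop:main2}---then identifies the exact tail $\P\{E_k>t\}\sim\mathcal{C}_Z(\alpha)\,t^{-\alpha}$ as $t\to\infty$.

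The core analytical step is the vague convergence of the marked point process
\begin{equation*}
\sum_{k\ge 1}\delta_{\bigl(S_{k-1}/n,\,\xi_k^{2}/n^{2},\,E_k/a_n\bigr)}
\end{equation*}
on $[0,\infty)\times(\mathbb{K}\setminus\{(0,0)\})$ to a Poisson random measure with intensity $\mathrm{LEB}\otimes(\nu_1\otimes\nu_2)$. The \emph{product} structure of the limiting intensity---which is the reason the subordinators $\widehat L_1$ and $\widehat L_2$ are independent in the limit, in sharp contrast with Theorem~\ref{thm:main11T}---comes precisely from $(\xi\rho 2)$, which forbids the simultaneous largeness of $\xi_k$ and $E_k$. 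Applying the continuous mapping theorem to the a.s.-continuous functional $(u,v)\mapsto v(u^{\leftarrow}(1))$ then yields the joint limit, and under~(B3) or~(C) the walk scale $n^2$ is strictly smaller than $a_n$, so $a_n^{-1}\sum_{k\le K_n}W_k\to 0$ in probability. The factor $2$ in~\eqref{Tn2} comes from the identity $T_n\approx n + 2\cdot\#\{\text{left steps before }T_n\}$ built into the branching representation.

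The main obstacle is twofold. First, obtaining the sharp tail asymptotic for $E_k$ with the correct constant $\mathcal{C}_Z(\alpha)$ is delicate because the quenched hitting-time law couples $\xi_k$ and $\lambda_k$ through the gambler's-ruin formula, so precise renewal-theoretic asymptotics---not merely an order-of-magnitude bound---are required. Second, one must upgrade $(\xi\rho 2)$ together with $\E(\rho\xi)^{\alpha+\varepsilon}<\infty$ into a functional truncation that eliminates those blocks in which $\xi_k$ and $\rho_k$ are jointly of moderate size; this is exactly what factorises the limiting Lévy measure into $\nu_1\otimes\nu_2$, and without it the limit would involve a dependent pair of subordinators as in Theorem~\ref{thm:main11T} rather than the independent pair $(\widehat L_1,\widehat L_2)$.
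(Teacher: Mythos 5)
Your overall plan---prove \eqref{Tn2}, dualize to get \eqref{Xn2}, use the branching-process representation of $T_n$, decompose into a walk part and an environmental part, and show the walk part is negligible while the environmental part drives the limit---matches the paper's strategy. But there is a structural gap in the way you set up the point-process convergence. Your marked point process is indexed by \emph{blocks} $k$ (between consecutive marked sites), and you attach to block $k$ the pair $(\xi_k^2/n^2,\,E_k/a_n)$. The per-block environmental contributions $E_k$ are emphatically \emph{not} iid under $\P$: the number of backward excursions launched from $S_k$ feeds into the population in blocks $k-1,k-2,\dots$, so the $E_k$'s are highly dependent through the branching dynamics. The paper avoids this by grouping between successive \emph{extinction times} $\tau_k$ of the embedded chain $\Z_k=Z_{S_k}$ (the Kesten--Kozlov--Spitzer device), which yields genuinely iid vectors $(S_{\tau_k}-S_{\tau_{k-1}},\,\overline{\W}_{\tau_k})$ to which Resnick--Greenwood and Straka--Henry can be applied. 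You do cite Lemma~\ref{prop:main2}, whose conclusion is precisely a tail estimate over one extinction cycle, but your point-process setup never makes that grouping, so the vague-convergence step as you've stated it would need a separate justification of asymptotic independence that you don't supply.

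Two further points. First, the $\xi_k^2/n^2$ coordinate is mis-cast: the subordinator $\widehat L_1$ tracking position arises from the \emph{linear} partial sums $\sum\xi_k$ normalized by $a(n)$ (index $\beta$), not from $\xi_k^2/n^2$; the squared quantities only appear inside the negligible walk contribution (the $T'_{n-S_{\nu(n)-1}}$ and $\W^0$ pieces), so keeping them as a separate coordinate in the point process conflates two distinct roles. Second, you attribute the product structure $\nu_1\otimes\nu_2$ of the limiting Lévy measure to $(\xi\rho 2)$; in fact, Proposition~\ref{prop:w1}---which provides exactly this asymptotic independence of $S_{\tau_1}$ and $\overline\W_{\tau_1}$ at the relevant scales---is proved \emph{without} $(\xi\rho 2)$. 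Condition $(\xi\rho 2)$ is instead used, via Lemmas~\ref{lem:rhonu}(ii) and \ref{negligible(01)}(ii), to show that the cross-block boundary term $Y_n=\sum_{j\le S_{\nu(n)-1}}(U_j^{(n)}-U_j^{(S_{\nu(n)-1})})$, i.e.\ the excursions launched from the final partial block into the earlier ones, is $o_\P(\P\{\xi>n\}^{-1/\alpha})$. Your argument correctly senses that something must control backward interaction across blocks, but locates it in the wrong place.
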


An informal but well-justified explanation of why
the normalization in Theorem \ref{thm:main2T} is plausible
inevitably requires introducing a new notation that we prefer to
avoid at this stage. Thus, we only note, without going into
details, that the normalization $\P\{\xi>n\}^{-1/\alpha}$ which is
different from that in Theorem \ref{thm:main11T} is given by the
composition $(f\circ g)(n)$, where $f(x)=x^{1/\alpha}$ and
$g(x)=\P\{\xi>x\}^{-1}$. The functions $g$ and $f$ represent the
contributions of the distributions of $\xi$ and $\rho$,
respectively.

When $\xi$ and $\rho$ are independent, the
conditions $(\xi\rho 2)$ and
$\E(\rho\xi)^{\alpha+\varepsilon}<\infty$ for some $\varepsilon>0$
are implied by the other assumptions of Theorem \ref{thm:main2T}
(as for $(\xi\rho 2)$, see the discussion at the beginning of
Section \ref{beta<1}).

\subsection{Results for $\beta=1$}\label{beta1}

The boundary case $\beta=1$ is essentially simpler
but technically more involved than the case $\beta\in (0,1)$.

Whenever \eqref{eq:reg_var_xi} holds (for $\beta\in (0,1]$) we
denote by $a$ any positive measurable function satisfying
\begin{equation}\label{eq:k50}
\lim_{t\to\infty}t\P\{\xi>a(t)\}=1.
\end{equation}
Further, we put, for $t>0$,
\begin{equation}\label{eq: mt}
m(t)=\int_0^t\P\{\xi>u\}{\rm d}u,\quad\mbox{ and } \quad
\pi(t):=m(a(t))
\end{equation}
and define a positive measurable function $\pi^{\ast}$ such that
$$\lim_{t\to\infty}\pi(t)\pi^{\ast}(t\pi(t))=\lim_{t\to\infty}\pi^{\ast}(t)\pi(t\pi^{\ast}(t))=1.$$
Since $\beta=1$ and $\E\xi=\infty$, $m$ and $\pi$ are slowly
varying and unbounded, and $\pi^\ast$ is a de~Bruijn conjugate
function for $\pi$, see Theorem 1.5.13 in
\cite{bingham1989regular}. In the present case $\beta=1$ we shall
use conditions ($\xi\rho 1$) and ($\xi\rho 2$) introduced in
Section \ref{beta<1} with
\begin{equation}\label{eq: cc}
c_1(t):=a(t\pi^\ast(t)), \qquad \mbox{ and } \qquad
c_2(t):=t^{-1}(t\pi^{\ast}(t))^{1/\alpha}.
\end{equation}
These functions may be well-defined for large $t$
only which is sufficient for our purposes. Although we are not
going to use this observation, let us note that the so defined
$c_i$ are asymptotically equivalent, up to multiplicative
constants, to the $c_i$ defined in the conditions ($\xi\rho\,{\rm
i}$) in Section \ref{beta<1} for the case $\beta\in (0,1)$.

When $\beta=1$, the two-dimensional subordinator ${\bf L}$ defined
in \eqref{lt} does not exist simply because \eqref{ineq} does not
hold any longer. However, its second component $L_2$ is still
well-defined. Actually, it is a drift-free stable subordinator
with parameter $1/2$ and the L\'{e}vy measure $\mu_2$ given by
$\mu_2((x,\infty))=\mathcal{C}_\mu x^{-1/2}$ for $x>0$. As a final
preparation, denote by $w$ and $\kappa$ asymptotic inverse
functions for $s\mapsto a(s\pi^\ast(s))^2$ and $s\mapsto
(s\pi^{\ast}(s))^{1/\alpha}$, respectively, where $\alpha>0$.
Since $s\mapsto a(s\pi^\ast (s))^2$ and $s\mapsto
(s\pi^{\ast}(s))^{1/\alpha}$ are regularly varying of indices $2$
and $1/\alpha$, such $w$ and $\kappa$ are regularly varying
functions of indices $1/2$ and $\alpha$.
\begin{thm}\label{thm:main3T}
Assume that the assumptions of Theorem \ref{thm:main11T} are
satisfied for $\beta=1$. Then
\begin{equation}\label{Tn_beta_1}
\frac{T_n}{a(n\pi^\ast (n))^2}~\dod~2L_2(1),\quad n\to\infty,
\end{equation}
and
\begin{equation}\label{Xn_beta_1}
\frac{X_n}{w(n)}~\dod~(2L_2(1))^{-1/2},\quad n\to\infty.
\end{equation}
\end{thm}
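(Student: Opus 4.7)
The plan is to follow the structure of the proof of Theorem~\ref{thm:main11T} but in the boundary regime $\beta=1$, where only the second coordinate of the bivariate L\'evy process $\mathbf{L}$ from \eqref{lt} remains well-defined. The branching-process representation from Section~\ref{sec:branching} still expresses the hitting time $T_n$ as a sum of essentially independent contributions indexed by the marked sites $S_0=0,S_1,\dots,S_{K_n}$ crossed before the walker reaches $n$, where $K_n:=\max\{k\geq 0:S_k\leq n\}$; each summand combines a SRW-type crossing time of the block of length $\xi_k$ with a random-drift factor encoded by $\rho_k$.

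First I would establish a weak law of large numbers for $K_n$: under ($\xi$) with $\beta=1$ and $\E\xi=\infty$, the classical renewal theorem for slowly varying tails yields $K_n/(n\pi^{\ast}(n))\to 1$ in $\P$-probability; this replaces the inverse $1$-stable subordinator $L_1^{\leftarrow}(1)$, which does not exist as a nondegenerate random variable. Second, I would prove a limit theorem for the drift-related partial sums up to the index $\lfloor n\pi^{\ast}(n)\rfloor$: normalized by $a(n\pi^{\ast}(n))^2$, they should converge in distribution to $2L_2(1)$, where $L_2$ is the $1/2$-stable subordinator with L\'evy measure $\mu_2$. The value of $\mathcal{C}_\mu$ in the regimes (A), (B1), (B2) at $\beta=1$ is inherited from the corresponding case of Theorem~\ref{thm:main11T}. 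Combining these two steps gives \eqref{Tn_beta_1}. The limit \eqref{Xn_beta_1} then follows from the standard inversion argument applied to the increasing process $T_{\cdot}$: because $w$ is the asymptotic inverse of $s\mapsto a(s\pi^{\ast}(s))^2$ and regularly varying of index $1/2$, one obtains $X_n/w(n)\dod (2L_2(1))^{-1/2}$.

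The main obstacle, consistent with the authors' remark that $\beta=1$ is technically more involved, is handling the residual contribution from the partial traversal between $S_{K_n}$ and $n$. In Theorem~\ref{thm:main11T} this residual is captured by the nondegenerate term $\vartheta(1-L_1(L_1^{\leftarrow}(1)-))^2$ of $\chi$; at $\beta=1$ the overshoot $n-S_{K_n}$ must be shown to be $o_{\P}(a(n\pi^{\ast}(n)))$, so that its squared SRW crossing time is negligible relative to $a(n\pi^{\ast}(n))^2$. This requires sharp uniform estimates exploiting the slow variation of $\pi$ and $\pi^{\ast}$ together with the joint tail hypothesis $(\xi\rho 1)$, now specified via $c_1$ in \eqref{eq: cc}. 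The moment condition $\E(\rho\xi)^{1/2+\varepsilon}<\infty$ and the assumptions in ($\rho 1$)/($\rho 2$) enter to bound the tails of the individual drift summands at the boundary exponent $1/2$, and careful truncation is needed so that the slowly varying factors present in both the gap tail and the scale $a(n\pi^{\ast}(n))^2$ do not spoil the stable-domain limit.
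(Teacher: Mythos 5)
Your overall scaffolding matches the paper's: a renewal law of large numbers $\nu(n)/(n\pi^\ast(n))\to 1$ replacing $L_1^\leftarrow(1)$, a stable limit for the normalized block sums to $L_2(1)$, an overshoot bound $n-S_{\nu(n)-1}=o_\P(a(n\pi^\ast(n)))$ killing the SRW-crossing term, and inversion to pass to $X_n$. However, there is a genuine gap in how you account for the non-main pieces of the decomposition \eqref{decomp}, and a misattribution of where the joint hypothesis $(\xi\rho 1)$ is actually used.

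Specifically, decomposition \eqref{decomp} splits $T_n$ into (a) the time spent up to the first visit to $S_{\nu(n)-1}$, (b) the SRW-type time $T'_{n-S_{\nu(n)-1}}$ spent in $[S_{\nu(n)-1},n]$ after hitting $S_{\nu(n)-1}$, and (c) the cross-generational term $Y_n=\sum_{j=0}^{S_{\nu(n)-1}}\big(U_j^{(n)}-U_j^{(S_{\nu(n)-1})}\big)$, which counts the extra excursions \emph{below} $S_{\nu(n)-1}$ occurring after the first visit there. Your proposal only addresses (a) and (b), and implicitly folds (c) into "the residual contribution from the partial traversal between $S_{K_n}$ and $n$", but (c) is not captured by the overshoot $n-S_{K_n}$: by Lemma \ref{lem:quenchedY} its quenched mean is of order $(n-S_{\nu(n)-1})\rho_{\nu(n)}(\cdots)$, so to show it is $o_\P(a(n\pi^\ast(n))^2)$ one needs to control $\rho_{\nu(n)}$. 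This is precisely where $(\xi\rho 1)$ with the $\beta=1$ scale $c_1(t)=a(t\pi^\ast(t))$ enters, through Lemma \ref{lem:rhonu}(i) and Lemma \ref{negligible1}(i). By contrast, the overshoot bound (the paper's Lemma \ref{lem:overshot_if_beta=1}) uses \emph{only} condition $(\xi)$ and Erickson's renewal-theoretic result on the relative overshoot for slowly varying tails; no joint $(\xi,\rho)$ hypothesis is needed there. Your proposal assigns $(\xi\rho 1)$ to the overshoot step and omits the $Y_n$ step entirely, so the argument as written would leave a nontrivial term in the decomposition uncontrolled.
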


\begin{thm}\label{thm:main3T1}
Assume that the assumptions of Theorem \ref{thm:main2T} are
satisfied for $\beta=1$. Then
\begin{equation}\label{Tn2_beta_1}
\frac{T_n}{(n\pi^{\ast}(n))^{1/\alpha}} ~\dod~2 L_2(1), \quad
n\to\infty
\end{equation}
and
\begin{equation*}
\frac{X_n}{\kappa (n)}~\dod~(2L_2(1))^{-\alpha}, \quad n\to\infty.
\end{equation*}
\end{thm}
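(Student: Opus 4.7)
The plan is to follow the proof of Theorem~\ref{thm:main2T} step by step, replacing, at the single step that is sensitive to the value of $\beta$, the $\beta$-stable convergence of the mark-counting process by the Feller weak law valid at the boundary $\beta=1$. First I would invoke the branching-process representation of $T_n$ recalled in Section~\ref{sec:branching}. Localising the contributions at the marked sites and using the fact that between consecutive marks the walk is a simple symmetric random walk, one obtains the schematic decomposition
\begin{equation*}
T_n \;\approx\; 2\sum_{k=1}^{\nu_n} \Pi_k\,\zeta_k, \qquad \Pi_k=\rho_1\rho_2\cdots\rho_k,
\end{equation*}
where $\nu_n=\#\{k\geq 1 : S_k\leq n\}$ is the number of marks in $[0,n]$ and $\zeta_k$ is a nonnegative excursion functional of mean of order $\xi_k^2$. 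Under $(\rho 1)$ with $\alpha\leq 1/2$, the products $\Pi_k$ have a regularly varying right tail of index $\alpha$ by a standard multiplicative renewal argument, and Lemma~\ref{prop:main2} supplies the exact tail constant $\mathcal{C}_Z(\alpha)$. The moment bound $\E(\rho\xi)^{\alpha+\varepsilon}<\infty$ combined with $(\xi\rho 2)$ guarantee that the $\xi$-fluctuations in the summands $\Pi_k\zeta_k$ are negligible at the $\alpha$-stable scale, so that the weighted sum behaves asymptotically like an $\alpha$-stable subordinator $\widehat{L}_2$ evaluated at a time determined by $\nu_n$.

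The difference between the case $\beta\in(0,1)$ and the present case lies in the asymptotic behaviour of $\nu_n$. For $\beta\in(0,1)$ one has $\nu_n/\P\{\xi>n\}^{-1}\dod\widehat{L}_1^\leftarrow(1)$, producing the nontrivial random argument of $\widehat{L}_2$ in~\eqref{Tn2}. For $\beta=1$ and $\E\xi=\infty$, the Feller weak law $S_n/a(n\pi^\ast(n))\topr 1$ (which is precisely what motivates the introduction of $\pi^\ast$) implies, by inversion of the monotone sequence $(S_n)$, a weak law $\nu_n/\nu_n^\ast\topr 1$ for an explicit deterministic $\nu_n^\ast$. Substituting this in-probability convergence into the stable scheme of the previous paragraph collapses the random time $\widehat{L}_1^\leftarrow(1)$ to the constant $1$, yielding $T_n/A_n\dod 2L_2(1)$ for an explicit deterministic normalisation $A_n$; a careful bookkeeping of the slowly varying factors, with $A_n$ the composition of $x\mapsto x^{1/\alpha}$ with an asymptotic inverse of $\nu_\cdot^\ast$, identifies $A_n=(n\pi^\ast(n))^{1/\alpha}$ and yields~\eqref{Tn2_beta_1}.

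The limit for $X_n$ follows by the standard inversion $\{X_n\geq m\}=\{T_m\leq n\}$: since $\kappa$ is regularly varying of index $\alpha$ and is, by construction, an asymptotic inverse of $s\mapsto(s\pi^\ast(s))^{1/\alpha}$, substituting $m=\lfloor t\kappa(n)\rfloor$ into~\eqref{Tn2_beta_1} and using the continuity of the law of $L_2(1)$ converts the convergence $T_n/(n\pi^\ast(n))^{1/\alpha}\dod 2L_2(1)$ into $X_n/\kappa(n)\dod(2L_2(1))^{-\alpha}$. I expect the main obstacle to lie in the middle paragraph: transferring the in-probability convergence of the random summation index $\nu_n$ into a heavy-tailed $\alpha$-stable limit for the weighted sum $\sum_{k=1}^{\nu_n}\Pi_k\zeta_k$ while simultaneously controlling the excursion functionals $\zeta_k$ requires a joint truncation of the $\rho_k$ and $\xi_k$ tails, precisely enabled by $(\xi\rho 2)$ with the boundary-tailored choice $c_2(t)=t^{-1}(t\pi^\ast(t))^{1/\alpha}$ from~\eqref{eq: cc}, together with uniform integrability and tightness estimates of the type developed in Section~\ref{sec:tails}.
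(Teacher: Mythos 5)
Your proposal correctly identifies the key structural idea — that passing from $\beta\in(0,1)$ to $\beta=1$ replaces the $\beta$-stable functional limit for $\nu(n)$ by a (Feller-type) weak law of large numbers, so that the random time $\widehat L_1^\leftarrow(1)$ in \eqref{Tn2} collapses to the deterministic constant $1$ and only $L_2$ survives in the limit. That is indeed the essential difference between Theorems~\ref{thm:main2T} and~\ref{thm:main3T1}, and Lemma~\ref{lem:lln_renewal_if_beta=1} is precisely the device the paper uses for this step, followed by $J_1$-continuity of composition (Lemma~\ref{whi}).

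However, two concrete gaps remain. First, your schematic decomposition $T_n\approx 2\sum_{k\leq\nu_n}\Pi_k\zeta_k$ with $\Pi_k=\rho_1\cdots\rho_k$ is \emph{not} the representation used here (or in~\cite{BurDysIksMarRoi:2018+}); it resembles the classical potential heuristic for nonsparse RWRE but is not adapted to the sparse setting. The paper works instead from the branching-process identity \eqref{decomp}, splitting $T_n$ into the total progeny $\sum_{k\le S_{\nu(n)-1}}Z_k$, the first-passage time $T'_{n-S_{\nu(n)-1}}$ of an auxiliary reflected SRW across the final incomplete block, the cross term $Y_n$, and the contribution from $(-\infty,0)$. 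The total progeny term is then handled through the block random variables $\overline{\W}_{\tau_k}$, which are genuinely iid under $\P$, and Lemma~\ref{prop:main2} computes the tail of $\sum_{k\le\tau_1}(\Z_k+\W_k^\downarrow)$, not of the products $\Pi_k$. Your heuristic would require separate justification that none of the paper's machinery supplies.

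Second, and more importantly, you never address why the final-block term $T'_{n-S_{\nu(n)-1}}$ is negligible at the scale $(n\pi^\ast(n))^{1/\alpha}$. By \eqref{weak8} and Lemma~\ref{lem:overshot_if_beta=1}, this term is $O_\P\big(a(n\pi^\ast(n))^2\big)$, so one must check $a(n\pi^\ast(n))^2=o\big((n\pi^\ast(n))^{1/\alpha}\big)$. For $\alpha<1/2$ this is automatic from regular variation, but at the boundary $\alpha=1/2$ (case (B3)) the two normalisations have the same regular-variation index $2$, and negligibility hinges on the hypothesis $\lim_{t\to\infty}\ell(t)=0$, which forces $a(t)=o(t)$. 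You invoke the conditions $(\xi\rho 2)$, $\E(\rho\xi)^{\alpha+\varepsilon}<\infty$ and the constant $\mathcal{C}_Z(\alpha)$ but never the condition $\ell(t)\to 0$, so this step would fail in your write-up exactly where it matters. Similarly, negligibility of the cross term $Y_n$ (Lemma~\ref{negligible1}(ii), using $(\xi\rho2)$ with the boundary-tailored $c_2$) and of the $(-\infty,0)$ contribution need to be stated and disposed of, not folded implicitly into your $\sum\Pi_k\zeta_k$ approximation.
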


\subsection{Comparison to earlier limit theorems}

It is more convenient to discuss limit results for $T_n$ rather
than $X_n$. Distributional limit theorems for $X_n$ and $T_n$ are
proved in~\cite{matzavinos2016random} for the case where $\xi$ is
$\P$-a.s.\ bounded (the corresponding environment may be called
{\it weakly sparse}). Then, as expected, the distribution of $\xi$
does not affect the asymptotic behavior of $T_n$ in a significant
way. The key parameter is $\alpha>0$ for which $\E \rho^\alpha =1$
(as in $(\rho 1)$), and $T_n$, properly normalized and centered,
converges in distribution to an $\alpha$-stable law (if $\alpha
\geq 2$ the corresponding limit is Gaussian), see Proposition
3.9~\cite{matzavinos2016random}. For instance, if $\alpha \in
(0,1)$, then `$T_n$ grows like $n^{1/\alpha}$'. The~arguments rely
on a change of measure which transfers the RWSRE into a random
walk in a Markov environment. As a consequence of $\P$-a.s.\
boundedness of $\xi$, the Markov chain driving the environment has
a~finite state space which makes certain results of
\cite{mayer2004limit} applicable.

In order to go beyond bounded $\xi$ one has to develop a different
approach (one possibility exploited both in
\cite{matzavinos2016random} and in the present article is to use a
link with certain branching processes with immigration in a random
environment). In the case of moderately sparse environment, that
is, $\E \xi <\infty$ it is shown in~\cite{BurDysIksMarRoi:2018+}
that the asymptotics $T_n$ strongly depends on the interplay of
parameters $\alpha$ and $\beta$ and the behavior of a slowly
varying function $\ell$ (see conditions $(\rho 1)$ and $(\xi)$ for
the definition). In all cases, the limit distribution of $T_n$,
properly normalized and centered, is still stable. However, the
normalization is $n^{1/\alpha}$ when the distribution of $\rho$
dominates that of $\xi$, whereas it is $n^{1/\alpha}L(n)$ for a
slowly varying $L$ when the distribution of $\xi$ dominates that
of $\rho$. Summarizing, the results of both
\cite{matzavinos2016random} and \cite{BurDysIksMarRoi:2018+} bear
a strong resemblance with those of \cite{kesten1975limit} which is
concerned with the nonsparse case $\xi=1$ $\P$-a.s.

On the technical level the difference between the
cases of moderate and strong sparsity is carefully explained at
the beginning of Section \ref{sec:strategy}. The strong sparsity
strongly manifests itself in Theorem \ref{thm:main11T}. In it,
unlike the earlier limit theorems discussed above the
normalization for $T_n$ is $n^2$ as if $(X_n)$ was a SRW. However,
the connection with a SRW does not extend to the limit
distribution which is rather exotic and seems to be new in the
context of RWRE and in general. In Theorem \ref{thm:main2T} the
limit distribution is still nonstable. However, the limit result
obtained here looks more similar to those in the moderately sparse
case. Loosely speaking, the normalization in the cited theorem can
be interpreted as the time-changed version of $n^{1/\alpha}$. The
case $\beta=1$ treated in Theorems \ref{thm:main3T} and
\ref{thm:main3T1} can be thought of as {\it almost moderate}. The
closeness to the moderate sparsity only reflects in a stable limit
distribution, whereas the normalization for $T_n$ is different
from those appearing in \cite{BurDysIksMarRoi:2018+}.

\section{An associated branching process}\label{sec:branching}

\subsection{The construction}\label{sec:br1}

The relation between certain random walks and
branching processes goes back to Harris~\cite{harris:1952}. Later
on, it was successfully applied, in an extended form, in
\cite{kesten1975limit} to obtain distributional limit theorems for
random walks in an iid random environment. Since then branching
processes have become a useful tool in the analysis of
one-dimensional RWRE. The presentation below follows closely that
in \cite{kesten1975limit} or~\cite{BurDysIksMarRoi:2018+}.

Fix $n \in \NN$ and consider the random variables $$U_i^{(n)} = \#
\big\{ k < T_n: X_k=i, X_{k+1} = i-1 \big\}, \qquad i\le n.$$
Since $X_{T_n}=n$ and $X_0=0$ we have, for $n\in \N$,
\begin{eqnarray*}
T_n &=& \# \mbox{ of steps during $[0,T_n)$}\\
&=& \# \mbox{ of steps to the right during $[0,T_n)$}+ \# \mbox{ of steps to the left during $[0,T_n)$}\\
&=& n + 2 \cdot \# \mbox{ of steps to the left during $[0,T_n)$}
\end{eqnarray*}
which gives
\begin{equation}\label{basic}
T_n = n + 2\sum_{i=-\infty}^n U_i^{(n)},\quad n\in\N.
\end{equation}
Recall from Section \ref{prelim} that, under the
setting of the present paper, $X$ is transient to the right, that
is, $\lim_{n\to\infty}X_n=+\infty$ $\P$-a.s. This entails
\begin{equation}\label{eq:negative_half_line_negligible}
\sum_{i < 0}U_i^{(n)}\leq \text{ total time spent by } X \text{ in
}(-\infty,0) < \infty\quad \P-\text{a.s.}
\end{equation}
In particular,
\begin{equation}\label{eq:t_n_decomposition1}
T_n = n + 2\sum_{i=0}^n U_i^{(n)} + O_\P(1),\quad
n\in\N,
\end{equation}
where $O_\P(1)$ is a term which is bounded in probability. As a
consequence, distributional limit theorems for $T_n$ will follow
from those for $n+2\sum_{i=0}^n U_i^{(n)}$. The latter variables
possess an elegant stochastic structure since, as argued below,
for fixed environment $\omega$, $U_n^{(n)}, U_{n-1}^{(n)},
U_{n-2}^{(n)}, \ldots U_0^{(n)}$ form a sequence of the first $n$
generations of a inhomogeneous branching process with unit
immigration.

In what follows, for $p\in (0,1)$, ${\rm Geom}(p)$ is a shorthand
for a geometric distribution with success probability $p$, that
is, $${\rm Geom}(p)\{\ell\}=p(1-p)^\ell,\quad \ell\in\N_0.$$ Note
that $U_n^{(n)}=0$ and that $U_{n-1}^{(n)}$ is equal to the number
of excursions to the left of $n-1$ before the first visit to $n$.
Due to the transitivity of $X$, $U_{n-1}^{(n)}$ is distributed
according to ${\rm Geom}(\omega_{n-1})$. Further, observe that,
for $i=1,\ldots, n-2$, $U_{n-i-1}^{(n)}$ can be represented as
follows:
\begin{equation}\label{eq: gr1}
U_{n-i-1}^{(n)} = \sum_{k=1}^{U_{n-i}^{(n)}} V_k^{(n-i-1)} +
V_0^{(n-i-1)},
\end{equation}
where, for $k\in\N$, $V_k^{(n-i-1)}$ denotes the number of
excursions to the left of $n-i-1$ during the $k$th excursion to
the left of $n-i$ and $V_0^{(n-i-1)}$ is the number of excursions
to the left of $n-i-1$ before the first excursion to the left of
$n-i$. Notice, since $X$ is transitive and enjoys the Markov
property with respect to the quenched probability, for each fixed
$\omega$, $V_1^{(n-i-1)}$, $V_2^{(n-i-1)},\ldots$ are independent
random variables with distribution ${\rm Geom}(\omega_{n-i-1})$
which are also independent of $U_{n-i}^{(n)}$. This shows that,
under $\P_\omega$, $U_n^{(n)}, U_{n-1}^{(n)}, U_{n-2}^{(n)},
\ldots, U_0^{(n)}$ are the consecutive generation sizes in a
inhomogeneous branching process with unit immigration in which the
particles and the immigrant in the $(i-1)$th generation
($i=1,\ldots, n-1$) reproduce according to ${\rm
Geom}(\omega_{n-i})$ distribution.

To ease the notation, we introduce another branching process $Z =
(Z_k)_{k\geq 0}$ which evolution can be described as follows. We
start with $Z_0=0$ particles. At the generation $n=1$, the first
immigrant enters the system and gives birth to $Z_1=G_0^{(1)}$ new
particles, where $G_0^{(1)}$ has distribution  ${\rm
Geom}(\omega_1)$. At the generation $n=2$, another immigrant
enters the system and all $Z_1+1$ particles reproduce
independently according to distribution ${\rm Geom}(\omega_2)$.
The offspring of the first generation particles (including the
immigrant) form the second generation. The~system evolves
according to these rules, with one new immigrant entering the
system at each generation. In general, for each $n\in\N$, $Z_n$
admits the following representation
\begin{equation}\label{eq:z}
Z_n = \sum_{k=1}^{Z_{n-1}} G_k^{(n)} + G_0^{(n)},
\end{equation}
where $G_0^{(n)}$ is the number of offspring of the $n$th
immigrant and $G_k^{(n)}$ is the number of offspring of the $k$th
particle in the $(n-1)$st generation (we set $G_k^{(n)}=0$ if the
$k$th particle in the $(n-1)$st generation does not exist). Thus,
the process $Z$ does not count the immigrants. Plainly, under
$\P_\omega$, for each $n\in\N$, $G_0^{(n)}$, $G_1^{(n)},\ldots$
are independent random variables with distribution ${\rm
Geom}(\omega_n)$ which are independent of $Z_n$. Whenever the
environment is sparse, while the process $Z$ reproduces according
to distribution ${\rm Geom}(\lambda_{k+1})$ at time $S_k$ for
$k\in\N_0$, most of the time, it evolves as a critical
Galton--Watson process with unit immigration and the offspring
distribution ${\rm Geom}(1/2)$, to be denoted by $\Zc = (\Zc_n)_{n
\in \NN_0}$. In particular, for $n\in\N$, given $(\xi_j,
\rho_j)_{1\leq j\leq n}$,
\begin{equation}\label{eq:cond}
\sum_{i=0}^{S_n} U_i^{(S_n)} \stackrel{{\rm d}}{=}
\sum_{k=0}^{S_n}Z_k.
\end{equation}

For later needs we note that $\Zc$ is a particular
instance of the process $Z$ which corresponds to $\omega_n=1/2$.
In particular, $\Zc$ satisfies \eqref{eq:z} in which
$(G_k^{(n)})_{k\in \N_0, n\in\N}$ are independent random variables
with distribution ${\rm Geom}(1/2)$.

\subsection{The notation}\label{subsec:notation}

For $k,n\in\N$, denote by $Z(k,n)$ the number of
progeny residing in the $n$th generation of the $k$th immigrant.
In particular, $Z(k,k)$ is the number of offspring of this
immigrant and $$Z_n = \sum_{k=1}^n Z(k,n),\quad n\in\N.$$
Moreover, for each $k \in \NN$, $(Z(k, n))_{n \geq k}$ forms a
branching process in a random environment (without immigration).
Since at each generation the random reproduction law is the same
for all particles, the processes $(Z(1, n))_{n \geq 1}$, $(Z(2,
n))_{n \geq 2}$, $(Z(3, n))_{n \geq 3}$ \ldots are dependent with
respect to the annealed probability but are independent with
respect to the quenched probability. For $n\in\N$ and $1\leq i\leq
n$, let $Y(i,n)$ denote the number of progeny in the generations
$i,i+1,\ldots, n$ of the $i$th immigrant, that is,
$$Y(i,n) = \sum_{k=i}^n Z(i,k).$$ Similarly, for $i\in\N$, we
denote by $Y_i$ the total progeny of the $i$th immigrant, that is,
$$Y_i=Y(i,\infty)=\sum_{k\geq i}Z(i,k).$$ We also define $W_n$ to
be the total population size in the first $n$ generations, that
is, $$W_n=\sum_{j=1}^n Z_j,\quad n\in\mathbb{N}.$$ In view of the
structure of the environment it is natural to divide the
population into blocks which include generations $1,\ldots, S_1$;
$S_1+1,\ldots, S_2$ and so on. To set out the necessary notation,
we write $$\Z_n = Z_{S_n},\quad n\in\N$$ for the number of
particles in the generation $S_n$, and $$\W_n = W_{S_n} -
W_{S_{n-1}}= \sum_{j=S_{n-1}+1}^{S_n}Z_j,\quad n\in\N$$ for the
total population in the generations $S_{n-1}+1,\ldots, S_n$.

Put $\Wc_n=\sum_{k=1}^n \Zc_k$ for $n\in\N$, so
that $\Wc_n$ is the total progeny in the first $n$ generations of
$\Zc$, see the end of Section \ref{sec:br1} for the definition of
$\Zc$. It is known that
\begin{equation}\label{distr_conv}
n^{-2} \Wc_n~\dod~ \vartheta,\quad n\to\infty,
\end{equation}
where $\vartheta$ is a random variable with the Laplace transform
given in~\eqref{varthe}, see Theorem 5
in~\cite{pakes1972critical}, and that
\begin{equation}\label{eq:w57}
\lim_{n\to\infty}  \E(  n^{-2} \Wc_n)^s = \E \vartheta^s, \qquad s
>0,
\end{equation}
see Lemma 6.5 in \cite{BurDysIksMarRoi:2018+}. These properties of
$\Zc$ will play an essential role in our proofs.

\section{The strategy of the proofs}\label{sec:strategy}

To prove a distributional limit theorem for $T_n$
it is natural to use a decomposition
\begin{equation}\label{eq:dec}
T_n=T_{S_{\nu(n)-1}}+(T_n-T_{S_{\nu(n)-1}}),\quad n\in\N,
\end{equation}
where
\begin{equation*}
\nu(t) = \inf\{ n \in \N \: : \: S_n >t \},\quad t\geq 0.
\end{equation*}
In principle, the asymptotic behavior of $T_{S_{\nu(n)-1}}$ may be
regulated by that of $S_n$, $W_{S_n}$ or both, {see formulae \eqref{eq:t_n_decomposition1} and \eqref{eq:cond}}. In the paper
\cite{BurDysIksMarRoi:2018+} which treats the case of moderate
sparsity $\E\xi<\infty$ while the contribution of the second
summand is negligible, the asymptotics of the first summand
$T_{S_{\nu(n)-1}}$ is driven by $W_{S_n}$ alone. The latter is
explained by the fact that the contribution of $S_n$ is only seen
in the form of a law of large numbers and, as such, degenerate in
the limit. The case of strong sparsity $\E\xi=\infty$ we are
interested in here is more involved. Indeed, now the asymptotics
of $T_{S_{\nu(n)-1}}$ is affected by $(S_n, W_{S_n})$, for, under
($\xi$), $S_n$, properly normalized, converges in distribution to
a nondegenerate random variable. Further, in Theorem
\ref{thm:main11T} the contributions of the summands in
\eqref{eq:dec} are comparable. Therefore, one has to investigate
their joint asymptotic behavior which leads to technical
complications. On the other hand, the asymptotics of
$T_n-T_{S_{\nu(n)-1}}$ alone is relatively easy to deal with, for
the principal component of this random variable is given by the
first-passage time of a reflected SRW stopped at an independent
time. The other main results, Theorems \ref{thm:main2T},
\ref{thm:main3T} and \ref{thm:main3T1}, are simpler than Theorem
\ref{thm:main11T} because the first summand in \eqref{eq:dec}
dominates the second.

The text below is borrowed from Section 4 in
\cite{BurDysIksMarRoi:2018+}, with minor alterations and
additions. While dealing with $W_{S_n}$ our main arguments follow
the strategy invented by Kesten et al.~\cite{kesten1975limit}.
Namely, for large $n$, we decompose $W_{S_n}$ as a sum of random
variables which are iid under the annealed probability $\P$. For
this purpose we define extinction times
\begin{equation}\label{eq:tau_def}
\tau_0:=0,\quad \tau_{k} := \min \{j>\tau_{k-1} : \Z_j = 0\},\quad
k\in\mathbb{N}.
\end{equation}
Let us emphasize that the extinctions of $Z$ in the generations
other than $S_1$, $S_2,\ldots$ are ignored. Set
$$\overline{\W}_{\tau_n}:=W_{S_{\tau_n}} -
W_{S_{\tau_{n-1}}},\quad n\in\N$$ and note that
$(\overline{\W}_{\tau_n},\tau_n-\tau_{n-1})_{n\in\N}$ are iid
random vectors under $\PP$. Since the random variables in question
are non-negative we have, for $n\in\N$,
\begin{equation}\label{eq:main_two_sided_estimate}
\sum_{k=1}^{\tau^{\ast}_n}\overline{\W}_{\tau_k}\leq
\sum_{k=1}^{S_n} Z_k\leq \sum_{k=1}^{\tau^{\ast}_n+1}
\overline{\W}_{\tau_k}\quad \P-\text{a.s.,}
\end{equation}
where $\tau_n^\ast$ is the number of extinctions of $Z$ in the
generations $S_0,\ldots, S_n$, that is, $$\tau^{\ast}_n:=\max
\{k\geq 0:\tau_k\leq n\},\quad n\in\mathbb{N}.$$

Lemma \ref{lem:nu} given next states that the
extinctions occur rather often.
\begin{lem}\label{lem:nu}
Assume that $\E\log \rho\in [-\infty, 0)$ and $\E\log \xi<\infty$.
Then $\E\tau_1 <\infty$. If additionally $\E
\rho^\varepsilon<\infty$ and $\E \xi^\varepsilon<\infty$ for some
$\varepsilon>0$, then $\me \exp (\gamma \tau_1)<\infty$ for some
$\gamma>0$.
\end{lem}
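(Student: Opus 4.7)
The plan is to view $(\Z_k)_{k\geq 0}$ as a Markov chain under the quenched law $\P_\omega$ and establish hitting-time tail bounds for the state $0$ via a Foster--Lyapunov (drift) argument that exploits the log-subcriticality $\E\log\rho<0$. First I would compute the one-step structure. Given $\Z_{k-1}=z$ and the block parameters $(\xi_k,\lambda_{k+1})$, the evolution in block $k$ is a critical $\mathrm{Geom}(1/2)$ Galton--Watson process with unit immigration run for $\xi_k-1$ generations starting from $z$ particles, followed by one generation with offspring distribution $\mathrm{Geom}(\lambda_{k+1})$ and an immigrant. This yields the quenched conditional mean $\E_\omega[\Z_k\mid \Z_{k-1}=z]=(z+\xi_k)\rho_{k+1}$. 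Using the explicit identity $\P_\omega\{\Zc_n=0\}=1/(n+1)$ (obtained by iterating the p.g.f.\ $f(s)=1/(2-s)$) together with the independence of lineages under $\P_\omega$, one derives the quenched lower bound
\[
\P_\omega\{\Z_k=0\mid \Z_{k-1}=z\}\;\geq\;\frac{\lambda_{k+1}}{\xi_k}\Bigl(\frac{\xi_k-1}{\xi_k}\Bigr)^z\qquad (\xi_k\geq 2),
\]
with the bound $\lambda_{k+1}^{z+1}$ handling the case $\xi_k=1$.

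For the finite-mean claim I take the Lyapunov function $V(z)=\log(1+z)$. Jensen's inequality applied to the quenched mean gives $\E_\omega[V(\Z_k)\mid \Z_{k-1}=z]\leq\log(1+(z+\xi_k)\rho_{k+1})$. Estimating this under the annealed measure, by decomposing $\log(1+(z+\xi)\rho)-\log(1+z)\leq \log\rho+\log(1+\xi/z)+o(1)$ as $z\to\infty$ and using the hypothesis $\E\log\xi<\infty$ (which via the split $\log(1+\xi/z)\leq (\xi/z)\1_{\xi\leq z}+\log(1+\xi)\1_{\xi>z}$ and dominated convergence forces $\E\log(1+\xi/z)\to 0$), yields the drift estimate
\[
\E\bigl[V(\Z_k)-V(\Z_{k-1})\mid \Z_{k-1}=z\bigr]\;\leq\;\E\log\rho+o(1)\;\leq\;-\eta\qquad (z\geq z_0)
\]
for some $\eta>0$ and $z_0\in\N$. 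Integrating the quenched lower bound of the first step over the environment gives $\P\{\Z_k=0\mid\Z_{k-1}=z\}\geq c(z_0)>0$ uniformly for $z\leq z_0$. A standard Foster--Lyapunov / Kendall-type argument then delivers $\E\tau_1<\infty$.

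For the exponential moment claim I would replace $V$ by $V_\delta(z)=(1+z)^\delta$ with $\delta>0$ chosen small (in particular $\delta\leq\varepsilon$). Subadditivity $(a+b)^\delta\leq a^\delta+b^\delta$ on $[0,\infty)$ together with Jensen give $\E[V_\delta(\Z_k)\mid \Z_{k-1}=z]\leq 1+z^\delta\,\E\rho^\delta+\E(\xi\rho)^\delta$. The map $\delta\mapsto\E\rho^\delta$ equals $1$ at $\delta=0$ with derivative $\E\log\rho<0$ there (finite because $\E\rho^\varepsilon<\infty$), so $\E\rho^\delta<1$ for all sufficiently small $\delta>0$; the term $\E(\xi\rho)^\delta$ is finite by H\"older from $\E\xi^\varepsilon,\E\rho^\varepsilon<\infty$. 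This produces a geometric drift $\E V_\delta(\Z_k)\leq qV_\delta(z)+K$ with $q<1$, and an exponential Foster--Lyapunov (Meyn--Tweedie) theorem then gives $\E\exp(\gamma\tau_1)<\infty$ for some $\gamma>0$.

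The main technical difficulty is controlling the drift of $V$ when $\xi$ has infinite mean; the hypothesis $\E\log\xi<\infty$ is indispensable for forcing $\E\log(1+\xi/z)\to 0$ as $z\to\infty$, and the split above together with dominated convergence is the key device. A minor additional subtlety is that $(\xi_k,\lambda_{k+1})_k$ is only $1$-dependent (rather than i.i.d.) when $\xi$ and $\lambda$ are dependent, but this does not affect the argument because the drift is computed at the level of the quenched Markov-chain kernel before integration.
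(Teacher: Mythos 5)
The paper does not prove Lemma \ref{lem:nu} itself; it refers to the Appendix of \cite{BurDysIksMarRoi:2018+}, so I can only assess your proposal on its own terms. The overall plan (a Foster--Lyapunov argument for the hitting time of $0$ for the chain $(\Z_k)$, with $\log(1+z)$ for the finite-mean claim and $(1+z)^\delta$ for the exponential moment) is a sensible route, and the ingredients you list are correct: the quenched one-step mean $(z+\xi_k)\rho$, the exact identity $\P\{\Zc_n=0\}=1/(n+1)$, the resulting minorization $\P_\omega\{\Z_k=0\mid\Z_{k-1}=z\}\geq \lambda_{k+1}\xi_k^{-1}((\xi_k-1)/\xi_k)^z$, and the observation that $\delta\mapsto\E\rho^\delta$ dips below $1$ near $0$. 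That said, there are two real gaps.

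First, the pointwise decomposition
$\log(1+(z+\xi)\rho)-\log(1+z)\leq\log\rho+\log(1+\xi/z)+o(1)$
is not correct as stated. The omitted term is $\log\bigl(1+\tfrac{1}{(z+\xi)\rho}\bigr)$, which is not $o(1)$ pointwise (it blows up as $\rho\downarrow 0$), and its expectation does not vanish unless $\E\log^{+}(1/\rho)<\infty$, i.e.\ unless $\E\log\rho>-\infty$; but the hypothesis explicitly allows $\E\log\rho=-\infty$. The conclusion (negative drift for $z\geq z_0$) is nevertheless true, and the clean route is reverse Fatou: one has the integrable dominating bound $\log\frac{1+(z+\xi)\rho}{1+z}\leq\log(1+\xi)+\log(1+\rho)$ (integrable under $\E\log\xi<\infty$ and $\E\log^{+}\rho<\infty$), together with the pointwise limit $\log\rho$, whence $\limsup_{z\to\infty}\E\log\frac{1+(z+\xi)\rho}{1+z}\leq\E\log\rho<0$. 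You should replace your dominated-convergence step by this.

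Second, the dismissal of the $1$-dependence is too quick. With the indexing of \eqref{eq:sparse} the transition $\Z_{k-1}\to\Z_k$ is governed by $(\xi_k,\lambda_{k+1})$, and if $\xi$ and $\lambda$ are dependent then the annealed process $(\Z_k)_k$ is not a Markov chain: $\xi_k$ is correlated with $\lambda_k$, which enters the construction of $\Z_{k-1}$. Computing the drift in the natural annealed filtration gives a quantity of the form $\E_\rho\log(1+(z+\xi')\rho)-\log(1+z)$ evaluated at $\xi'=\xi_{k+1}$, and this is \emph{not} uniformly negative in $\xi'$ (it is positive when $\xi'\gg z$). The sentence ``the drift is computed at the level of the quenched kernel before integration'' does not resolve this, because the quenched kernel's drift parameter $\xi_{k+1}$ is not independent of $\Z_k$. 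To make the argument rigorous in the dependent case one would have to augment the state (e.g.\ work with $(\Z_k,\xi_{k+1})$ and produce a joint Lyapunov function) or give a different argument; alternatively, restrict to the case where the transition uses $(\xi_k,\rho_k)$ from the same copy (as the paper's formula $\E_\omega\Z_k=\rho_k\E_\omega\Z_{k-1}+\rho_k\xi_k$ in \eqref{eq:aux2} suggests), in which case $(\Z_k)$ is Markov and your scheme applies directly. As written, though, this is a genuine hole.
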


The proof of this lemma can be found in the Appendix
of~\cite{BurDysIksMarRoi:2018+}. Under the assumptions of our main
results Lemma \ref{lem:nu} ensures that ${\tt m}=\E\tau_1<\infty$.
The strong law of large numbers for renewal processes
$(\tau_n^\ast)_{n \in \N_0}$ makes it plausible that, for large
$n$,
\begin{equation*}
W_{S_n}~ \approx~ \sum_{k=1}^{[{\tt m}^{-1}n]}
\overline{\W}_{\tau_k}.
\end{equation*}
The right-hand side, properly centered and normalized, converges
in distribution if, and only if, the distribution of
$\overline{\W}_{\tau_1}$ belongs to the domain of attraction of a
stable law. According to Lemma
\ref{prop:tail_main}, the latter is indeed the case under the
assumptions of our theorems.

An important technical ingredient of our proofs is the
distribution tail behavior of the vector $(S_{\tau_1},
\overline{\W}_{\tau_1})$. To investigate it we have to discuss the
structure of $\overline{\W}_{\tau_1}$ in more details. To this
end, for $i\in\N$, we divide particles residing in the generations
$S_{i-1}+1,\ldots, S_i$ into groups:
\begin{itemize}
\item $\mathcal{P}_{1,i}$~--~the progeny residing in the generations $S_{i-1}+1,\ldots, S_i-1$ of the immigrants arriving
in the generations $S_{i-1},\ldots, S_i-2$, the number of these
being $$\W^0_i:=\sum_{j=S_{i-1}+1}^{S_i-1}\sum_{k=j}^{S_i-1}
Z(j,k);$$

\item $\mathcal{P}_{2,i}$~--~ the progeny residing in the generations $S_{i-1}+1,\ldots, S_i-1$ of the immigrants arriving in the generations $0,1,\ldots, S_{i-1}-1$,
the number of these being $$\W^\downarrow_i:=
\sum_{j=1}^{S_{i-1}}\sum_{k=S_{i-1}+1}^{S_i-1} Z(j,k);$$

\item $\mathcal{P}_{3,i}$~--~ particles of the generation $S_i$, the number of these being $\Z_i$.
\end{itemize}

The aforementioned partition of the population which is depicted
on Figure \ref{fig:population} induces the following
decompositions which hold $\P$-a.s. $$\W_i =
\W^0_i + \W^\downarrow_i +\Z_i,\quad i\in\N$$ and
\begin{equation}\label{eq:wp3}
\overline{\W}_{\tau_1}=\sum_{i=1}^{\tau_1} \W^0_i +
\sum_{i=1}^{\tau_1} \W^\downarrow_i +\sum_{i=1}^{\tau_1}\Z_i.
\end{equation}

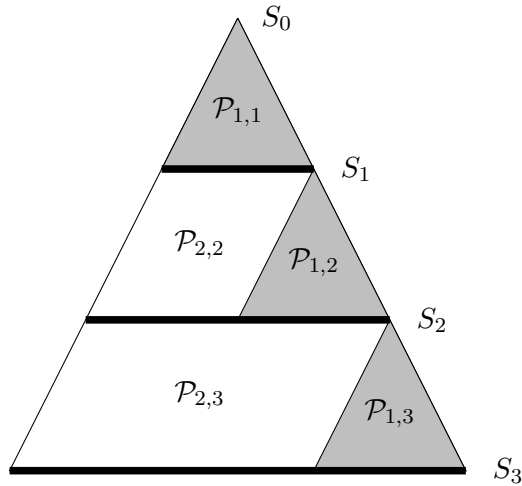
\begin{figure}[ht]
\centering
\begin{tikzpicture}
        \draw (0,6) node[right=5pt]{$S_0$}; \draw (0,6) -- (3,0) --
        (-3,0)-- (0,6); \draw[fill=gray!50] (0,6) -- (1,4) -- (-1,4) --
        (0,6) node[below=26pt]{$\mathcal{P}_{1,1}$}; \draw[fill=gray!50]
        (1,4) -- (2,2) -- (0,2) -- (1,4)
        node[below=26pt]{$\mathcal{P}_{1,2}$}; \draw[fill=gray!50] (2,2)
        -- (3,0) -- (1,0) -- (2,2) node[below=26pt]{$\mathcal{P}_{1,3}$};
        \draw[line width=1mm] (-1,4) -- (1,4) node[right=5pt]{$S_1$};
        \draw[line width=1mm] (-2,2) -- (2,2) node[right=5pt]{$S_2$};
        \draw[line width=1mm] (-3,0) -- (3,0) node[right=5pt]{$S_3$};
        \draw (-0.5,1) node{$\mathcal{P}_{2,3}$}; \draw (-0.5,3)
        node{$\mathcal{P}_{2,2}$};
        \end{tikzpicture}
        \caption{The generations $0$ through $S_3$ of the BPRE $Z$ and the
        partition of the corresponding population into parts
        $\mathcal{P}_{i,j}$, $i,j=1,2,3$. The bold horizontal lines
        represent particles in the generations $S_1$, $S_2$ and $S_3$,
        that is, those comprising the groups $\mathcal{P}_{3,i}$,
        $i=1,2,3$. By definition, $\mathcal{P}_{2,1}=\oslash$.}
        \label{fig:population}
\end{figure}

Finally, we explain how we are going to treat the second summand
in \eqref{eq:dec}. We represent $T_n-T_{S_{\nu(n)-1}}$ as the sum
of two components: the times spent by
$(X_k)_{k=T_{S_{\nu(n)-1}+1},\ldots, T_n}$ in $(-\infty,
S_{\nu(n)-1})$ and $[S_{\nu(n)-1}, n]$, respectively. We shall
prove in Lemmas \ref{negligible(01)} and \ref{negligible1} below
that under the assumptions of our main theorems, the first
component is asymptotically negligible. Before presenting our
reasoning for the second component we find it convenient to recall
a few classical notions and formulate a technical lemma.

Let $D$ denotes the Skorokhod space of right-continuous functions
defined on $[0,\infty)$ with finite limits from the left at
positive points. The two commonly used topologies that the
Skorokhod space $D$ is equipped with are $J_1$- and
$M_1$-topologies. We refer to \cite{Billingsley:1968,
Jacod+Shiryaev:2003} and \cite{Whitt:2002} for comprehensive
accounts of the $J_1$- and the $M_1$-topologies, respectively. In
the sequel $\overset{{\rm J_1}}{\Rightarrow}$ and $\overset{{\rm
M_1}}{\Rightarrow}$ will mean weak convergence on $D$ when endowed
with the $J_1$-topology and the $M_1$-topology, respectively. Our
main results are one-dimensional distributional limit theorems.
However, we find it useful to appeal, at some intermediate steps,
to functional limit theorems on $D$. Working in this more general
setting simplifies considerably proofs of limit theorems involving
compositions. Theorem 13.2.2 in \cite{Whitt:2002} stated as Lemma
\ref{whi} below provides a necessary technical background.
\begin{lem}\label{whi}
Let $k\in\N$. The composition mapping $((x_1,\ldots, x_k),
\psi)\mapsto (x_1\circ \psi,\ldots, x_k\circ \psi)$ is $J_1$-
continuous at vectors $(x_1,\ldots, x_k)\in D^k$ with nonnegative
coordinates and nonnegative continuous and strictly increasing
$\psi$.
\end{lem}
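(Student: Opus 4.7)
The lemma is Theorem 13.2.2 in Whitt, and the plan is to reconstruct its proof by reducing the multi-coordinate $J_1$-continuity statement to coordinatewise local-uniform convergence, exploiting the parametric representation of $J_1$-convergence together with the fact that continuity and strict monotonicity of the inner map $\psi$ make its convergence uniform and invertible.

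First, suppose $(x_1^{(n)},\ldots,x_k^{(n)},\psi^{(n)})\to(x_1,\ldots,x_k,\psi)$ coordinatewise in $J_1$. Continuity of $\psi$ automatically upgrades $\psi^{(n)}\to\psi$ to local-uniform convergence on $[0,\8)$, and strict monotonicity of $\psi$ makes $\psi^{-1}$ continuous on the image $\psi([0,T])$ for every $T>0$. For each coordinate $i$, choose homeomorphisms $\mu_i^{(n)}$ of $[0,\8)$ with $\mu_i^{(n)}(0)=0$, $\mu_i^{(n)}\to\mathrm{id}$ uniformly on compacts and $x_i^{(n)}\circ\mu_i^{(n)}\to x_i$ uniformly on compacts, as furnished by the definition of $J_1$-convergence.

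Second, I would exhibit a time change witnessing $J_1$-convergence of $x_i^{(n)}\circ\psi^{(n)}$ to $x_i\circ\psi$. The natural candidate is
$$\sigma_i^{(n)}:=\psi^{-1}\circ(\mu_i^{(n)})^{-1}\circ\psi^{(n)},$$
well defined on $[0,T]$ for all sufficiently large $n$ since $(\mu_i^{(n)})^{-1}\circ\psi^{(n)}\to\psi$ uniformly on $[0,T]$ and $\psi$ is a homeomorphism onto its image. Local-uniform convergences $\mu_i^{(n)}\to\mathrm{id}$, $\psi^{(n)}\to\psi$ and continuity of $\psi^{-1}$ combine to give $\sigma_i^{(n)}\to\mathrm{id}$ uniformly on $[0,T]$. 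By construction $(x_i^{(n)}\circ\psi^{(n)})\circ\sigma_i^{(n)}=(x_i^{(n)}\circ\mu_i^{(n)})\circ\psi$, so uniform convergence on the compact $\psi([0,T])$ delivers $(x_i^{(n)}\circ\psi^{(n)})\circ\sigma_i^{(n)}\to x_i\circ\psi$ uniformly on $[0,T]$, which is exactly $J_1$-convergence in $D$.

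The delicate step is the handling of discontinuities of $x_i$: the uniform convergence $x_i^{(n)}\circ\mu_i^{(n)}\to x_i$ holds at continuity points of $x_i$ and extends to compacts avoiding the at most countably many jump times. Because $\psi$ is a homeomorphism onto its image, continuity points of $x_i\circ\psi$ in $[0,T]$ correspond bijectively, via $\psi$, to continuity points of $x_i$ in $\psi([0,T])$; this is precisely where strict monotonicity of $\psi$ is indispensable. Performing the construction coordinatewise for $i=1,\ldots,k$ then produces the required product $J_1$-convergence in $D^k$.
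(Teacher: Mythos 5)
The paper does not prove Lemma~\ref{whi}: it is quoted verbatim from Theorem 13.2.2 of Whitt's book, so there is no in-paper argument to compare against, and your reconstruction must be assessed on its own.

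There are two problems. First, the claimed identity $(x_i^{(n)}\circ\psi^{(n)})\circ\sigma_i^{(n)}=(x_i^{(n)}\circ\mu_i^{(n)})\circ\psi$ with your $\sigma_i^{(n)}=\psi^{-1}\circ(\mu_i^{(n)})^{-1}\circ\psi^{(n)}$ is false; taking $\psi=\psi^{(n)}=\mathrm{id}$ and $\mu_i^{(n)}(s)=2s$ gives $\sigma_i^{(n)}(t)=t/2$, whence the left side is $x_i^{(n)}(t/2)$ while the right side is $x_i^{(n)}(2t)$. What actually holds is $x_i^{(n)}\circ\psi^{(n)}=(x_i^{(n)}\circ\mu_i^{(n)}\circ\psi)\circ\sigma_i^{(n)}$, which is usable via the symmetric form of the $J_1$ definition, but the proof must be written around the correct identity.

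Second, and more seriously, $\sigma_i^{(n)}$ is a composition of the continuous maps $\psi^{-1}$ and $(\mu_i^{(n)})^{-1}$ with $\psi^{(n)}$, so it inherits every jump of $\psi^{(n)}$ and is therefore not an increasing homeomorphism of $[0,T]$ unless $\psi^{(n)}$ itself is continuous and strictly increasing. In the paper the lemma is invoked with $\psi^{(n)}$ equal to the step functions $n^{-1}\tau^{\ast}_{[n\cdot]}$ and $(n\pi^{\ast}(n))^{-1}(\nu(n\cdot)-1)$, so this is exactly the case that must be covered; continuity and strict monotonicity are assumed only of the limit $\psi$. Locally uniform convergence $\psi^{(n)}\to\psi$ does not by itself produce a bicontinuous time change, and since $x_i^{(n)}$ has jumps, a small sup-distance between $\psi^{(n)}$ and any continuous surrogate cannot be absorbed. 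Your closing paragraph also misstates the $J_1$ definition: with the homeomorphisms $\mu_i^{(n)}$ in hand the convergence $x_i^{(n)}\circ\mu_i^{(n)}\to x_i$ is uniform on all of $[0,T]$, not merely at continuity points of $x_i$ --- the time change is precisely what lines up the jumps --- so the ``delicate step'' you flag is not where the difficulty sits. The real issue, handled in Whitt's proof, is to construct a genuine homeomorphism matching the jumps of $x_i^{(n)}\circ\psi^{(n)}$ to those of $x_i\circ\psi$ even though $\psi^{(n)}$ skips over intervals; your argument leaves this unaddressed.
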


Let $(X_k^\prime)_{k\in\N_0}$ be a starting at zero simple random
walk with reflection to the right at the origin, that is,
$X_0^\prime=0$, $\P\{X_{k+1}^\prime=i \pm 1| X_k^\prime = i\}
=1/2$ for $k,i\in\N$ and $\P\{X_{k+1}^\prime=1|X_k^\prime =0\}=1$
for $k\in\N_0$. We shall assume that $(X_k^\prime)_{k\in\N_0}$ is
independent of $(\xi_j,\rho_j)_{j\in\N}$ and $Z$. Set
\begin{equation}\label{eq:tprime}
T_n^\prime:=\inf\{k\in\N_0: X_k^\prime=n\}, \quad n\in\N_0.
\end{equation}
With this notation at hand we observe that
\begin{align}
&\text{given}~\{S_{\nu(n)-1}=j\}~\text{the time spent by
}~(X_k)_{k=T_{S_{\nu(n)-1}+1},\ldots, T_n}~\text{in}~
[S_{\nu(n)-1},n]\notag\\& \text{has the same distribution as}~
T_{n-j}^{\prime}.\label{eq:dec2}
\end{align}

It is well-known that
\begin{equation*}
n^{-1/2}X^\prime_{[n\cdot]}~\overset{{\rm
J_1}}{\Rightarrow}~ B(\cdot),\quad n\to\infty,
\end{equation*}
where $B:=(B(t))_{t\geq 0}$ is a reflected Brownian motion. By a
standard inversion argument, this yields
\begin{equation}\label{weak8}
n^{-2}T^\prime_{[n\cdot]}~\overset{{\rm
J_1}}{\Rightarrow}~ M(\cdot),\quad n\to\infty,
\end{equation}
where $M(t):=\inf\{s>0: B(s)=t\}$ for $t\geq 0$. By Proposition
3.7 on p.~71 in~\cite{Revuz+Yor:1999}
$$\E\exp(-sM(t))=\frac{1}{\cosh(t\sqrt{2s})},\quad s\geq 0.$$
Recalling \eqref{varthe} we conclude that $M(1)\od 2\vartheta$.
These facts explain the appearance of $\vartheta$
in Theorem \ref{thm:main11T}.

\section{The distribution tail behavior of $S_{\tau_1}$ and $\W_{\tau_1}$}\label{sec:tails}

To prove our main results we have to know the asymptotics of
$\P\{S_{\tau_1}>t\}$, $\P\{\W_{\tau_1}>t\}$ and $\P\big\{
S_{\tau_1} > g(t)x_1 ,\overline \W_{\tau_1} > f(t)x_2\big\}$ as
$t\to\infty$ for suitable functions $f$ and $g$.

\subsection{The marginal behavior}

\begin{lem}\label{future}
Assume that condition $(\xi)$ holds for $\beta\in(0,1)$ and that
$\E\log\rho\in [-\infty, 0)$. Then
\begin{equation}\label{eq:k34}
\P\big\{S_{\tau_1} > t\}~ \sim~ (\E
\tau_1)\P\{\xi>t\}~\sim~(\E\tau_1) t^{-\beta}\ell(t),\quad
t\to\infty
\end{equation}
and
\begin{equation}\label{eq:k35}
\P\big\{S_{\tau_1} > t\}~\sim~\P\{\max_{1\leq k\leq
\tau_1}\,\xi_k>t\},\quad t\to\infty.
\end{equation}
\end{lem}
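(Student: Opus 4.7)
The plan is to treat $\tau_1$ as a stopping time for a filtration with respect to which $(\xi_k)_{k\geq 1}$ is an iid input, and then apply the single-big-jump principle for heavy-tailed stopped sums. Concretely, let $\mathcal{H}_k$ be the $\sigma$-algebra generated by $(\xi_j,\lambda_j)_{j\leq k}$ together with the branching randomness used through generation $S_k$. The event $\{\tau_1\geq k\}=\{\Z_1\neq 0,\ldots,\Z_{k-1}\neq 0\}$ lies in $\mathcal{H}_{k-1}$, while $\xi_k$ is independent of $\mathcal{H}_{k-1}$, so $\tau_1$ is an $(\mathcal{H}_k)$-stopping time. Since $(\xi)$ with $\beta\in(0,1)$ implies $\E\log\xi<\infty$, Lemma~\ref{lem:nu} combined with $\E\log\rho<0$ yields $\E\tau_1<\infty$. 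The second equivalence in \eqref{eq:k34} is just the definition of $(\xi)$, so everything reduces to proving
\begin{equation*}
\P\{S_{\tau_1}>t\}~\sim~\P\{\max_{1\leq k\leq\tau_1}\xi_k>t\}~\sim~\E\tau_1\cdot\P\{\xi>t\},\quad t\to\infty.
\end{equation*}

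I first handle the right-hand equivalence. Set $N_t:=\sum_{k=1}^{\tau_1}\1_{\{\xi_k>t\}}$; by the stopping-time property and Fubini,
\begin{equation*}
\E N_t=\sum_{k\geq 1}\P\{\tau_1\geq k\}\P\{\xi>t\}=\E\tau_1\cdot\P\{\xi>t\},
\end{equation*}
so $\P\{\max_{k\leq\tau_1}\xi_k>t\}=\P\{N_t\geq 1\}\leq\E N_t$ yields the upper bound. For the matching lower bound I invoke the Paley--Zygmund inequality $\P\{N_t\geq 1\}\geq(\E N_t)^2/\E N_t^2$ and show that the correction
\begin{equation*}
\E N_t(N_t-1)=2\P\{\xi>t\}\sum_{k\geq 1}\E[(\tau_1-k)^+\1_{\{\xi_k>t\}}]
\end{equation*}
is $o(\P\{\xi>t\})$ as $t\to\infty$. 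Conditioning on $\mathcal{H}_k$, the inner expectation equals $\E[\1_{\{\tau_1>k,\,\xi_k>t\}}\E\sigma(\Z_k)]$, where $\sigma(m)$ denotes the extinction time of a freshly started $\Z$-process from $m$ particles. Subcriticality of the environment (coming from $\E\log\rho<0$) gives $\E\sigma(m)=O(1+\log m)$, while the critical branching asymptotic \eqref{distr_conv} controls the inflation $\log\Z_k=O(\log\xi_k)+O(\log\Z_{k-1})$ that occurs on $\{\xi_k>t\}$. Integrability then rests on $\E\log\xi<\infty$, delivering the required $o$-estimate.

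For the left-hand equivalence \eqref{eq:k35}, the lower bound $\P\{S_{\tau_1}>t\}\geq\P\{\max_{k\leq\tau_1}\xi_k>t\}$ is trivial by positivity. The matching upper bound is obtained through the single-big-jump split
\begin{equation*}
\P\{S_{\tau_1}>t\}\leq \P\{\max_{k\leq\tau_1}\xi_k>(1-\varepsilon)t\}+\P\Big\{\sum_{k=1}^{\tau_1}(\xi_k\wedge(1-\varepsilon)t)>\varepsilon t\Big\},
\end{equation*}
valid for arbitrary $\varepsilon\in(0,1)$. The first probability is controlled by the previous paragraph, and the second is shown to be $o(\P\{\xi>t\})$ by applying Wald's identity to the iid truncated sequence --- using the Karamata asymptotic $\E[\xi\wedge x]\sim x^{1-\beta}\ell(x)/(1-\beta)$ --- combined with a Fuk--Nagaev-type tail bound for the centred truncated sum stopped at $\tau_1$; this sharpens the naive Markov estimate $O(\P\{\xi>t\})$ to the desired $o(\P\{\xi>t\})$. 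Letting $\varepsilon\downarrow 0$ completes the proof. The main obstacle, I expect, will be the second-moment control $\sum_k\E[(\tau_1-k)^+\1_{\{\xi_k>t\}}]=o(1)$: an atypical large gap $\xi_k>t$ inflates $\Z_k$ through the intervening critical branching with immigration, and one must argue that the subsequent subcritical blocks absorb this inflated population fast enough for the sum to vanish. Quantifying the interplay between the critical Galton--Watson fluctuations of \eqref{distr_conv} and the geometric decay of the subcritical BPRE, uniformly in~$k$, is the most delicate point.
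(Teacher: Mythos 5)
Your overall scheme is sound in spirit -- exploit that $\tau_1$ is a stopping time so $\xi_{\tau_1}$'s ``future'' independence can be leveraged, reduce the sum tail to the max tail by one-big-jump, and show $\P\{\max_{k\le\tau_1}\xi_k>t\}\sim(\E\tau_1)\P\{\xi>t\}$. But the execution has a real gap, and the paper's argument is both shorter and cleaner in a way worth internalising.

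For the max asymptotic, the paper decomposes over the \emph{first} index where $\xi_k$ exceeds $t$:
\begin{align*}
\P\Big\{\max_{1\le k\le\tau_1}\xi_k>t\Big\} &= \sum_{k\ge1}\P\Big\{\max_{i<k}\xi_i\le t,\ \xi_k>t,\ \tau_1\ge k\Big\}
=\P\{\xi>t\}\sum_{k\ge1}\P\Big\{\max_{i<k}\xi_i\le t,\ \tau_1\ge k\Big\},
\end{align*}
since $\{\max_{i<k}\xi_i\le t,\ \tau_1\ge k\}$ is $\mathcal{H}_{k-1}$-measurable and $\xi_k$ is independent of $\mathcal{H}_{k-1}$. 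The remaining sum is dominated termwise by $\P\{\tau_1\ge k\}$ (summable, $\E\tau_1<\infty$) and converges termwise to $\P\{\tau_1\ge k\}$, so dominated convergence gives the answer exactly, with no second moment needed. Your Paley--Zygmund route instead requires $\sum_{k\ge1}\E[(\tau_1-k)^+\1_{\{\xi_k>t\}}]\to0$, and you correctly single this out as the hard point; but you do not prove it, and it is genuinely delicate, precisely because $\{\tau_1>k\}$ and the residual life $(\tau_1-k)^+$ both depend on $\Z_k$, which is positively correlated with $\{\xi_k>t\}$. The claimed bound $\E\sigma(m)=O(1+\log m)$ for the time to hit zero of the block process started from $m$ is plausible but unproved, and in any case the bookkeeping needed to control $\Z_k$ on $\{\xi_k>t\}$ uniformly in $k$ is exactly the kind of work the first-exceedance decomposition renders unnecessary. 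This is the main gap.

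For the sum asymptotic, the paper bypasses single-big-jump entirely by citing Korshunov's Wald-type identity for infinite-mean walks stopped at a stopping time not depending on the future: $\E(S_{\tau_1}\wedge t)\sim(\E\tau_1)\E(\xi\wedge t)$, followed by Karamata and the monotone density theorem to invert this into a tail estimate. Your proposed alternative --- the $\varepsilon$-split plus a Fuk--Nagaev bound for the centred truncated sum stopped at $\tau_1$ --- is not wrong in principle, but you acknowledge yourself that the naive Markov estimate only gives $O(\P\{\xi>t\})$, and the promised sharpening to $o(\P\{\xi>t\})$ via Fuk--Nagaev is left unestablished; standard Fuk--Nagaev bounds are stated for deterministic $n$, and conditioning on $\tau_1$ (or bounding it on an event of small probability) needs to be written out. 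So this half is also incomplete. Overall: the two genuine gaps (the second-moment control and the $o$-estimate for the truncated sum) can both be closed, but the paper's route --- first-exceedance plus DCT for the max, Korshunov's identity plus a Tauberian argument for the sum --- avoids them entirely and is what you should aim for.
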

\begin{proof}
The assumptions guarantee $\E\log\xi<\infty$ which in turn secures
$\E\tau_1<\infty$ by Lemma~\ref{lem:nu}.

The random variable $\tau_1$ {\it does not depend
on the future of the sequence} $(\xi_i)_{i\in\N}$, that is, for
each $n\in\N$, the collections of random variables
\begin{equation*}
(\xi_1,\ldots, \xi_n, {\bf 1}_{\{\tau_1 \le n\}})\quad  \mbox{and}
\quad (\xi_{n+1}, \xi_{n+2},\ldots)
\end{equation*}
are independent. With this at hand a specialization of Theorem 1
in \cite{Korshunov:2009} yields
$$\E (S_{\tau_1}\wedge t)~\sim~ (\E\tau_1)\E(\xi\wedge t),\quad
t\to\infty.$$ By Karamata's theorem (Proposition 1.5.8 in
\cite{bingham1989regular}) $$\E(\xi\wedge t)~\sim~
(1-\beta)^{-1}t^{1-\beta}\ell(t),\quad t\to\infty.$$ An
application of the monotone density theorem (Theorem 1.7.2 in
\cite{bingham1989regular}) completes the proof of
\eqref{eq:k34}.

Turning to the proof of \eqref{eq:k35}, write
\begin{eqnarray*}
\P\{\max_{1\leq k\leq \tau_1}\,\xi_k>t\}&=&\sum_{k\geq
1}\P\{\max_{1\leq i\leq k-1}\,\xi_i\leq t, \xi_k>t, \tau_1\geq
k\}\\&=& \P\{\xi>t\}\sum_{k\geq 1}\P\{\max_{1\leq i\leq
k-1}\,\xi_i\leq t, \tau_1\geq k\},
\end{eqnarray*}
where the last equality is a consequence of the fact that $\tau_1$
does not depend on the future of $(\xi_i)_{i\in\N}$. By the
dominated convergence theorem
$${\lim}_{t\to\infty}\frac{\P\{\max_{1\leq k\leq \tau_1}\,\xi_k>t\}}{\P\{\xi>t\}}= \E\tau_1.$$
\end{proof}

It is worth mentioning that Theorem 1 in \cite{Korshunov:2009}
cited in the previous proof treats standard random walks with
two-sided jumps of infinite mean stopped at an arbitrary random
variable of finite mean which does not depend on the future of the
sequence of jumps. In particular, the regular variation of the
distribution tail of a jump is not assumed.

Below we present a collection of auxiliary results borrowed from
Section 5 in \cite{BurDysIksMarRoi:2018+} which will be used in
the sequel.
\begin{lem}[Lemma 5.1 in \cite{BurDysIksMarRoi:2018+}] \label{lem:lu1}
Assume that \eqref{eq:reg_var_xi} holds with some $\beta>0$. Then
$$\P\{\W^0_1 >t\}~\sim~ (\E\vartheta^{\beta/2}) t^{-\beta/2}\ell(t^{1/2}),\quad t\to\infty,$$
where $\vartheta$ is a random variable with the Laplace transform
given in \eqref{varthe}.
\end{lem}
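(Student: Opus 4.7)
\medskip

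\noindent\textbf{Proof proposal.} The starting point is the structural identity
$$\W^0_1\overset{d}{=}\Wc_{\xi-1},\qquad \text{with } \xi \perp \Wc.$$
Indeed, on the event $\{S_1=s\}$ the generations $1,\ldots, s-1$ are all unmarked, so every reproduction occurring there is $\mathrm{Geom}(1/2)$, and the collection $\{Z(j,k):1\le j\le k\le s-1\}$ describes a copy of the critical Galton--Watson process with unit immigration $\Zc$ restricted to its first $s-1$ generations. The defining sum
$\W^0_1=\sum_{j=1}^{S_1-1}\sum_{k=j}^{S_1-1}Z(j,k)$
thus equals $\Wc_{S_1-1}$, and the $\mathrm{Geom}(1/2)$ coin tosses at unmarked sites are independent of $(\xi_k)_{k\in\N}$, which yields the stated independence.

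The second step is to compare $\Wc_{\xi-1}$ with the heuristic limit $\xi^2\vartheta$, where $\vartheta\perp\xi$ has the Laplace transform \eqref{varthe}. Conditioning on $\vartheta$, the regular variation of $t\mapsto\P\{\xi>t\}$ and the uniform convergence theorem for slowly varying functions give
\begin{equation*}
\P\{\xi^2\vartheta>t\}=\E\,\P\bigl\{\xi>(t/\vartheta)^{1/2}\bigm|\vartheta\bigr\}\sim \E\bigl[\vartheta^{\beta/2}\bigr]\,t^{-\beta/2}\ell(t^{1/2}),\qquad t\to\infty.
\end{equation*}
Dominated convergence here is legitimate because $\vartheta$ has finite moments of every positive order (in particular $\E\vartheta^{\beta/2+\delta}<\infty$ for some $\delta>0$), a fact that is built into \eqref{eq:w57}.

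It remains to establish $\P\{\W^0_1>t\}\sim\P\{\xi^2\vartheta>t\}$, and the plan is to split the conditional sum
$$\P\{\W^0_1>t\}=\sum_{k\ge 1}\P\{\xi=k\}\,\P\{\Wc_{k-1}>t\}$$
into three ranges of~$k$. \emph{(a)} For $k\le\eps t^{1/2}$: choose $s>\beta/2$, apply Markov's inequality together with the moment bound $\E\Wc_n^s\le Cn^{2s}$ supplied by \eqref{eq:w57}, and then invoke Karamata's theorem to evaluate $\E[\xi^{2s}\1_{\{\xi\le\eps t^{1/2}\}}]\sim c_s(\eps t^{1/2})^{2s-\beta}\ell(t^{1/2})$; this yields a contribution bounded by $C_s\eps^{2s-\beta}\,t^{-\beta/2}\ell(t^{1/2})$. \emph{(b)} For $\eps t^{1/2}\le k\le\eps^{-1}t^{1/2}$: use \eqref{distr_conv} upgraded to uniform convergence (by P\'olya's theorem, since $\vartheta$ has a continuous distribution) to replace $\P\{\Wc_{k-1}>t\}$ by $\P\{(k-1)^2\vartheta>t\}$ with an $o(1)$ error uniform in $k$; summing gives the main term $(1+o_\eps(1))\P\{\xi^2\vartheta>t\}$. \emph{(c)} For $k\ge\eps^{-1}t^{1/2}$: bound $\P\{\Wc_{k-1}>t\}\le 1$ and use regular variation of $\xi$ to obtain a contribution of order $\eps^{\beta}t^{-\beta/2}\ell(t^{1/2})$. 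Dividing by $t^{-\beta/2}\ell(t^{1/2})$ and sending $\eps\downarrow 0$ (so both $\eps^{2s-\beta}$ and $\eps^\beta$ vanish) finishes the argument.

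The main obstacle is the small-$k$ range (a): a crude uniform bound of the form $\P\{\Wc_{k-1}>t\}\le\P\{\Wc_{[\eps t^{1/2}]}>t\}$ is fatal, since it fails to capture that $\Wc_{k-1}$ is typically much smaller than~$t$ when $k\ll t^{1/2}$. Only the combination of the moment estimate \eqref{eq:w57} \emph{with} the Karamata asymptotics for the truncated moments $\E[\xi^{2s}\1_{\{\xi\le\eps t^{1/2}\}}]$ produces a bound that is $O(\eps^{2s-\beta})$ relative to the target scale. Once $s$ is chosen slightly above $\beta/2$, the rest is routine regular-variation calculus together with the uniform version of \eqref{distr_conv}.
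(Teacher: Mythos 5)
Your argument is correct, but it takes a genuinely different route from the one in the paper (and presumably in \cite{BurDysIksMarRoi:2018+}, from which the lemma is borrowed). The paper's strategy --- visible in the proof of Lemma \ref{lem:wp2}, of which Lemma \ref{lem:lu1} is essentially the marginal case $x_1\to 0$ after changing variables --- is to introduce the first-passage time $v_x=\inf\{k:\Wc_k>x\}$, rewrite $\P\{\Wc_\xi>t\}$ as $\E R(v_t)$ with $R(y)=\P\{\xi>y\}$, use the inversion of \eqref{distr_conv} to get $t^{-1/2}v_t\dod\vartheta^{-1/2}$, and then upgrade convergence in distribution of $R(v_t)/R(t^{1/2})$ to convergence of expectations via a uniform-integrability estimate (the one cited from Proposition 6.1 of \cite{BurDysIksMarRoi:2018+}). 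You instead expand $\P\{\W^0_1>t\}=\sum_k\P\{\xi=k\}\P\{\Wc_{k-1}>t\}$ and split into three ranges of $k$: Markov plus Karamata's truncated-moment asymptotics with $s>\beta/2$ for $k\lesssim\eps t^{1/2}$, P\'olya's uniform convergence of $\Wc_n/n^2$ to $\vartheta$ for the central range, and a trivial tail bound for $k\gtrsim\eps^{-1}t^{1/2}$. Both are valid; the inversion-plus-UI approach is tighter and scales directly to the joint-tail refinement of Lemma \ref{lem:wp2} that the paper needs, while your decomposition is more elementary and self-contained, at the cost of tracking $\eps$-dependent error terms through three regimes (and of some glossed-over but harmless bookkeeping, e.g.\ the passage from $(k-1)^2$ to $k^2$ in range (b) and controlling $\P\{\xi^2\vartheta>t,\xi\notin[\eps t^{1/2},\eps^{-1}t^{1/2}]\}$, which follows by the same estimates as in ranges (a) and (c)).
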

\begin{lem}[Lemma 5.2 in \cite{BurDysIksMarRoi:2018+}]\label{lem:zmom}
Assume that $\E\log\rho\in [-\infty, 0)$ and that, for some $s \le
2$, $\E (\rho \xi)^{s}$ and $\E \xi^s$ are finite. Then $\E
\Z_1^s<\infty$ and there exists a positive constant $C$ such that,
for all $n\in\N$,
\begin{equation*}
\E \Z_n^s \leq \left\{  \begin{array}{cc}
C& \mbox{ if } \gamma <1,\\
Cn& \mbox{ if } \gamma =1,\\
C\gamma^n& \mbox{ if } \gamma >1,
\end{array}\right.
\end{equation*}
where $\gamma=\E\rho^s$. If additionally $\E \xi^{2s}<\8$, then
\begin{equation*} \label{eq:lu1}
\E \W_1^s <\8.
\end{equation*}
\end{lem}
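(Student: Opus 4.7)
The plan is to derive a first-order linear moment recursion of the form $a_n := \E \Z_n^s \le \gamma\,a_{n-1} + D$ by conditioning on the environment of $Z$ between consecutive marked sites; iterating from $a_0 = 0$ then produces the three-case bound according as $\gamma = \E \rho^s$ is less than, equal to, or greater than $1$.

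First I would identify the quenched mean $M_n := \E[\Z_n \mid (\xi_k,\rho_k)_{k\le n}]$. Between $S_{n-1}$ and $S_n-1$ the process $Z$ evolves as a critical Galton--Watson process with unit immigration and $\mathrm{Geom}(1/2)$ reproduction (mean preserving, offspring variance $2$), so starting from $\Z_{n-1}$ particles at time $S_{n-1}$ the expected population at time $S_n-1$ equals $\Z_{n-1} + (\xi_n - 1)$; the terminal $\mathrm{Geom}(\lambda_n)$ step at $S_n$ multiplies by $\rho_n$ and absorbs the final immigrant. This yields the perpetuity recursion $M_n = \rho_n(M_{n-1} + \xi_n)$, equivalently $M_n = \sum_{k=1}^n \xi_k \prod_{j=k}^n \rho_j$, whose $s$-th moment exhibits exactly the three regimes claimed in the lemma by classical perpetuity bounds.

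For $s \in (0,1]$, conditional Jensen together with the subadditivity $(a+b)^s \le a^s + b^s$ give
\[
\E[\Z_n^s \mid \Z_{n-1},\xi_n,\rho_n] \le (\rho_n(\Z_{n-1}+\xi_n))^s \le \rho_n^s(\Z_{n-1}^s + \xi_n^s),
\]
and the independence of $\rho_n$ from $\Z_{n-1}$ delivers $a_n \le \gamma\,a_{n-1} + \E(\rho\xi)^s$. For $s \in (1,2]$ I would instead compute the conditional second moment from the decomposition $\Z_n \mid (B,\rho_n) \sim \sum_{k=1}^{B+1}\mathrm{Geom}(\lambda_n)$, where $B$ is a critical branching process with immigration of length $\xi_n - 1$ started from $\Z_{n-1}$ (its variance is $O(\Z_{n-1}\xi_n + \xi_n^2)$); Jensen with the concave function $x \mapsto x^{s/2}$ then produces the same type of recursion $a_n \le \gamma\,a_{n-1} + D$, with $D$ depending only on $\E(\rho\xi)^s$, $\E\xi^s$ and $\gamma$. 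The base case $\E \Z_1^s < \infty$ is the $n=1$ specialization.

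For the bound on $\W_1 = \sum_{j=1}^{S_1} Z_j$, I would write $\W_1 = \Wc_{S_1-1} + \Z_1$, using that on the unmarked generations $1,\dots,S_1 - 1$ the process $Z$ is distributed as $\Zc$, which is independent of $S_1 = \xi_1$. Conditioning on $\xi_1 = m$ and invoking \eqref{eq:w57} in the quantitative form $\E \Wc_m^s \le C(1+m^{2s})$ gives $\E \Wc_{S_1-1}^s \le C\,(1+\E\xi^{2s})$, which combined with $\E \Z_1^s < \infty$ yields $\E \W_1^s < \infty$. The main technical obstacle I anticipate is the $s \in (1,2]$ step of the recursion: the cross-terms inherited from the BPI variance have the schematic form $\rho_n^s \xi_n^{s/2}(\Z_{n-1}^{s/2} + \xi_n^{s/2})$, and controlling their expectation requires a careful Cauchy--Schwarz-type pairing of each $\xi$-factor with a $\rho$-factor so as to produce quantities like $\E(\rho\xi)^{s/2}$ or $\E(\rho\xi)^s$ (together with Young's inequality to absorb a resulting $a_{n-1}^{1/2}$ contribution into $\gamma a_{n-1}$), because no separate moment of $\rho$ beyond $\E\rho^s$ is assumed.
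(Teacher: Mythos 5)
Your overall plan—extract the recursion $a_n\le\gamma\,a_{n-1}+D$ from the quenched mean $\E_\omega[\Z_n\mid\Z_{n-1}]=\rho_n(\Z_{n-1}+\xi_n)$, iterate from $a_0=0$, and handle $\W_1$ by splitting off the critical part $\Wc_{S_1-1}$—is the right one, and the $s\in(0,1]$ step is correct as written. But the $s\in(1,2]$ step contains a genuine gap precisely where you flag the technical obstacle, and the fix you propose does not work.

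After Jensen on $x\mapsto x^{s/2}$ and subadditivity, the quenched-variance term ${\rm Var}_\omega(\Z_n\mid\Z_{n-1})\asymp\rho_n^2\,\Z_{n-1}\xi_n+\rho_n(1+\rho_n)(\Z_{n-1}+\xi_n)+\rho_n^2\xi_n^2$ produces, in expectation, a contribution of order $\E[\Z_{n-1}^{s/2}]$. Your proposal is to bound this by $a_{n-1}^{1/2}$ (Jensen) and absorb $C\,a_{n-1}^{1/2}$ into $\gamma\,a_{n-1}$ via Young's inequality. That necessarily replaces the recursion coefficient $\gamma$ by $\gamma+\epsilon$ for some $\epsilon>0$, and this is fatal in the boundary regime $\gamma=1$: the recursion $a_n\le(1+\epsilon)a_{n-1}+C$ gives exponential growth, while the lemma asserts $a_n\le Cn$. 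The same looseness also appears if you keep the $a_{n-1}^{1/2}$ term: $a_n\le a_{n-1}+C\,a_{n-1}^{1/2}+D$ is compatible with $a_n\asymp n^2$ (compare the ODE $a'=C\sqrt a$), not with $a_n=O(n)$. So the argument as sketched does not establish the $\gamma=1$ case at all.

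The correct way to close the recursion is to \emph{bootstrap} rather than interpolate: control $\E[\Z_{n-1}^{s/2}]$ by applying the already-proven $s'\le1$ case of the lemma with $s'=s/2$. The hypotheses $\E(\rho\xi)^{s/2}<\infty$, $\E\xi^{s/2}<\infty$ are inherited from the $s$-level ones, and the key observation is that $\gamma'=\E\rho^{s/2}<(\E\rho^s)^{1/2}\le1$ whenever $\gamma\le1$, by \emph{strict} Jensen (strictness holds because $\rho$ cannot be a.s.\ constant: if it were, $\E\log\rho<0$ would force $\rho\equiv c<1$, hence $\gamma=c^s<1$, excluding $\gamma=1$). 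Thus $\sup_n\E[\Z_n^{s/2}]<\infty$ when $\gamma\le1$, and the cross term is a genuine constant, yielding $a_n\le\gamma\,a_{n-1}+D$ exactly. For $\gamma>1$ one similarly notes $\gamma'\le\gamma^{1/2}<\gamma$, so the $s/2$-level bound ($O(1)$, $O(n)$, or $O(\gamma'^{\,n})$) is dominated geometrically by $\gamma^n$ in the iteration. With this amendment the rest of your argument, including the $\W_1$ estimate via the quantitative form of \eqref{eq:w57}, goes through.
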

\begin{lem}[Lemma 5.4 in \cite{BurDysIksMarRoi:2018+}]\label{lem:Wnu}
Assume that, for some $s \le 2$, $\E \rho^s<1$, $\E (\rho \xi)^s$
and $\E \xi^s$ are finite. Then, for all $s_0 \in (0,s)$,
\begin{equation*}
\E  \left(\sum_{i=1}^{\tau_1}\Z_i  \right)^{s_0} < \infty.
\end{equation*}
If additionally $\E\xi^{3s/2}<\infty$, then
\begin{equation}\label{eq:5?}
\E\left(\sum_{i=1}^{\tau_1}\W_i^\downarrow\right)^{s_0}<\infty.
\end{equation}
\end{lem}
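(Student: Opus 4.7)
The plan is to reduce both bounds to summable estimates of the form $\sum_{i\ge 1}\E[X_i^{s_0}\mathbbm{1}_{\{\tau_1\ge i\}}]^{\min(1,1/s_0)}$, where $X_i$ stands for $\Z_i$ or $\W_i^\downarrow$. Writing $\sum_{i=1}^{\tau_1}X_i=\sum_{i\ge 1}X_i\mathbbm{1}_{\{\tau_1\ge i\}}$ and invoking Minkowski's inequality when $s_0\ge 1$ (or the subadditivity $(\sum a_i)^{s_0}\le\sum a_i^{s_0}$ when $s_0\le 1$) shifts the task to controlling each summand. H\"older's inequality with conjugate exponents $s/s_0$ and $s/(s-s_0)$ gives
\[
\E[X_i^{s_0}\mathbbm{1}_{\{\tau_1\ge i\}}]\le(\E X_i^s)^{s_0/s}\,\P\{\tau_1\ge i\}^{1-s_0/s}.
\]
Since the hypotheses imply $\E\rho^\varepsilon<\infty$ and $\E\xi^\varepsilon<\infty$ for some $\varepsilon>0$, the stronger conclusion of Lemma~\ref{lem:nu} yields $\P\{\tau_1\ge i\}\le C e^{-\gamma i}$ for some $\gamma>0$; hence once $\sup_i\E X_i^s<\infty$ is known the sum converges geometrically. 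For the first claim, the supremum is furnished directly by Lemma~\ref{lem:zmom} in the regime $\E\rho^s<1$.

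For the second claim, the core task is to prove $\sup_i\E(\W_i^\downarrow)^s<\infty$. The structural observation is that the environment is deterministic and equal to $1/2$ at all non-marked sites $S_{i-1}+1,\dots,S_i-1$, so the descendants in these $\xi_i-1$ generations of the immigrants arriving up to time $S_{i-1}$ evolve as a critical $\mathrm{Geom}(1/2)$ Galton--Watson process \emph{without} further immigration, started from the $\Z_{i-1}$ individuals alive at generation $S_{i-1}$. Letting $\Yc=(\Yc_k)_{k\ge 0}$ be such a single-ancestor critical GW with $\Yc_0=1$ and $Y_n:=\sum_{k=1}^n \Yc_k$, I obtain the representation
\[
\W_i^\downarrow\;\od\;\sum_{p=1}^{\Z_{i-1}}Y_{\xi_i-1}^{(p)},
\]
where $(Y_n^{(p)})_{p\ge 1}$ are iid copies of $Y_n$ independent of $(\Z_{i-1},\xi_i)$, while $\Z_{i-1}$ is itself independent of $\xi_i$ (the former being measurable with respect to the branching dynamics up to generation $S_{i-1}$, the latter being the subsequent inter-marker gap).

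For the moments of $Y_n$, the classical identities $\E\Yc_k=1$ and $\E\Yc_k\Yc_l=1+2\min(k,l)$ for the critical $\mathrm{Geom}(1/2)$ offspring distribution give $\E Y_n = n$ and $\E Y_n^2 = O(n^3)$; Jensen's inequality applied to the concave map $x\mapsto x^{s/2}$ then delivers $\E Y_n^s\le Cn^{3s/2}$ for $s\in[1,2]$. Conditionally on $(\Z_{i-1},\xi_i)=(N,n)$, the decomposition $\sum_p Y_n^{(p)} = N\,\E Y_n + \sum_p(Y_n^{(p)}-\E Y_n)$ combined with the von Bahr--Esseen inequality ($s\in[1,2]$) produces
\[
\E\!\left[(\W_i^\downarrow)^s\mid \Z_{i-1},\xi_i\right]\le C\bigl(\Z_{i-1}^s\xi_i^s+\Z_{i-1}\xi_i^{3s/2}\bigr);
\]
unconditioning via the independence of $\Z_{i-1}$ and $\xi_i$ and appealing to Lemma~\ref{lem:zmom} together with $\E\xi^{3s/2}<\infty$ yields the required uniform bound. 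When $s<1$ the simpler estimate $\E[(\sum_p Y_n^{(p)})^s\mid N,n]\le N^s(\E Y_n)^s=N^sn^s$ (Jensen) is enough and dispenses with both the second-moment computation and von Bahr--Esseen. Plugging the uniform bound into the H\"older/exponential-decay machinery of the first paragraph closes the proof.

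The main obstacle is the fine calibration of the moment estimate $\E Y_n^s\le Cn^{3s/2}$ to the hypothesis $\E\xi^{3s/2}<\infty$. A naive bound $\E Y_n^s\le Cn^{2s}$ extracted from the scaling $Y_n/n^2\Rightarrow$ (a nondegenerate limit) would force the strictly stronger $\E\xi^{2s}<\infty$. The saving is that a critical GW without immigration reaches generation $n$ only with probability of order $1/n$, so $\E Y_n^2$ is $O(n^3)$ rather than $O(n^4)$; feeding this through Jensen produces exactly the $3s/2$-exponent that matches the hypothesis.
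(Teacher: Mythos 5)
Your argument is correct and, as far as I can tell, follows essentially the same route one would expect (and that the companion paper takes): exponential tails of $\tau_1$ from Lemma~\ref{lem:nu}, uniform $s$-moments of $\Z_i$ from Lemma~\ref{lem:zmom}, the representation $\W_i^\downarrow\od\sum_{p=1}^{\Z_{i-1}}D_p^{(i)}$ with $D_p^{(i)}$ copies of the total progeny $Y_{\xi_i-1}$ of a single-ancestor critical $\mathrm{Geom}(1/2)$ process, and then H\"older/Minkowski to sum. The genuinely delicate point, which you identify and handle correctly, is that $\E Y_n^2=O(n^3)$ rather than $O(n^4)$; this is what makes the hypothesis $\E\xi^{3s/2}<\infty$ (instead of $\E\xi^{2s}<\infty$) the right one, and your Jensen/von~Bahr--Esseen derivation of $\sup_i\E(\W_i^\downarrow)^s<\infty$ is sound (independence of $\Z_{i-1}$ and $\xi_i$ holds because $\Z_{i-1}$ is measurable with respect to $(\xi_j,\lambda_j)_{j\le i-1}$ and the reproduction randomness up to generation $S_{i-1}$, all of which are independent of $\xi_i$). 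Note only that the paper in front of you does not actually reproduce the proof — it cites Lemma~5.4 of the companion article — so the comparison here is with the natural proof rather than with text displayed in this manuscript.
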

\begin{lem}[Lemma 5.5 in \cite{BurDysIksMarRoi:2018+}] \label{prop:main2}
Assume that $(\rho 1)$ holds for some $\alpha\in (0,2]$,
$\E\xi^{3\alpha/2}<\infty$ and $\E(\rho\xi)^\alpha<\infty$. Then
$$\P\bigg\{ \sum_{k=1}^{\tau_1} \big( \Z_k + \W_k^\downarrow  \big)>t\bigg\}~\sim~(\E\tau_1) \mathcal{C}_Z(\alpha) t^{-\a},\quad t\to\infty$$
for a positive constant $\mathcal{C}_Z(\alpha)$ which can be
represented as follows: $$\mathcal{C}_Z(\alpha) = \lim_{A \to
\infty} \E \ZZ^\alpha_{\sigma_A} {\bf 1}_{\{ \sigma_A < \tau_1 \}}
\cdot \lim_{x \to \infty}x^\alpha \P \bigg\{ \sum_{k\geq 0}\rho_1
\ldots \rho_k \xi_{k+1}>x \bigg\}.$$ Here, $\sigma_A = \inf \{ i
\in \NN \: : \: Z_j>A \mbox{ for some } j \leq S_i \}$. Both
limits exist and are finite.
\end{lem}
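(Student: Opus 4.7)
The plan is to follow the implicit renewal theory approach pioneered by Kesten, Kozlov and Spitzer and refined by Goldie, which is the natural route for tail asymptotics of order $t^{-\alpha}$ driven by the multiplicative random walk $\Pi_n := \rho_1\cdots \rho_n$ with index $\alpha$ determined by $\E\rho^\alpha = 1$. The heavy tail of $\sum_{k=1}^{\tau_1}(\Z_k + \W_k^\downarrow)$ is produced by the atypical event that the block population $\Z_k$ grows anomalously large during a single excursion, after which the BPRE evolution is governed in distribution by the perpetuity $\mathcal{R}_\infty := \sum_{k\geq 0}\rho_1\cdots\rho_k\xi_{k+1}$, whose tail constant appears as the second factor in $\mathcal{C}_Z(\alpha)$.

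The first concrete step would be the large-exceedance decomposition
\begin{equation*}
\P\Big\{\textstyle\sum_{k=1}^{\tau_1}(\Z_k + \W_k^\downarrow) > t\Big\} = \P\{\textstyle\sum > t,\,\sigma_A\geq\tau_1\} + \P\{\textstyle\sum > t,\,\sigma_A<\tau_1\}.
\end{equation*}
On the first event the block population stays below $A$ throughout the excursion, so the sum is controlled by $A$-multiples of the length- and downward-contribution moments supplied by Lemma~\ref{lem:Wnu}, and this contribution is $o(t^{-\alpha})$ as $t\to\infty$ for each fixed $A$. On the second event I would apply the strong Markov property of the BPRE at generation $S_{\sigma_A}$: conditional on $\F_{\sigma_A}$ and on $\Z_{\sigma_A}=z$, the post-$\sigma_A$ increment is a sum of $z$ conditionally independent single-particle lineages propagated through the fresh environment $(\rho_{\sigma_A+j},\xi_{\sigma_A+j})_{j\geq 1}$, together with an independent contribution from immigrants arriving after $S_{\sigma_A}$.

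The key distributional identification is that a single-particle BPRE lineage, block by block, contracts by a multiplicative factor with mean $\rho$ coming from the Geom$(\lambda)$ reproduction at the marked site, modulated by critical Galton-Watson noise over the $\xi-1$ intervening unmarked generations. Iterating the block recursion, the total progeny of one such lineage has the same tail as $\mathcal{R}_\infty$, and Goldie's implicit renewal theorem applies under $(\rho 1)$ and $\E(\rho\xi)^\alpha<\infty$ to give
\begin{equation*}
\lim_{x\to\infty} x^\alpha \P\{\mathcal{R}_\infty>x\} = c_\infty \in (0,\infty).
\end{equation*}
A Breiman-type lemma then multiplies this by $\E[\Z_{\sigma_A}^\alpha \mathbf{1}_{\{\sigma_A<\tau_1\}}]$, whose limit as $A\to\infty$ exists and is finite thanks to the $\alpha$-th moment bounds of Lemma~\ref{lem:zmom}. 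The prefactor $\E\tau_1$ emerges exactly as in the random-sum tail principle: the excursion features $\tau_1$ independent block-contributions to the tail event, and its mean counts how many of these contribute to leading order.

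The main obstacle will be quantifying the per-block error between the genuine BPRE recursion and the idealized multiplicative recursion solved by $\mathcal{R}_\infty$; these errors involve critical Galton-Watson fluctuations over sparse blocks whose length $\xi$ has only $3\alpha/2$ finite moments, which is precisely why the stronger assumption $\E\xi^{3\alpha/2}<\infty$ is invoked rather than the minimal $\E\xi^\alpha<\infty$. A secondary technical point is the exchange of limits as $A\to\infty$ and $t\to\infty$, which requires a uniform tail bound of the form $\P\{\textstyle\sum>t,\sigma_A<\tau_1\}\leq K t^{-\alpha}$ with $K$ independent of $A$, obtained from a size-biased martingale estimate on $\Z_{\sigma_A}$ at the rare exceedance time.
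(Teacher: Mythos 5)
This lemma is not proved in the present paper: it is stated verbatim as Lemma~5.5 of \cite{BurDysIksMarRoi:2018+}, and the text only remarks that the form of $\mathcal{C}_Z(\alpha)$ ``can be derived from the proof'' in that reference. So there is no proof here to compare against, and your proposal must be judged as a reconstruction of an argument from another paper. As such, your roadmap is consistent with what can be read off from the constant and the surrounding supporting material: the quenched-mean affine recursion $\E_\omega\Z_k = \rho_k\E_\omega\Z_{k-1}+\rho_k\xi_k$ (inequality \eqref{eq:aux2}) does identify $\sum_{k\geq 0}\rho_1\cdots\rho_k\xi_{k+1}$ as the relevant perpetuity; the Grincevi\v{c}ius--Goldie theorem applies under $(\rho1)$ together with $\E\xi^\alpha<\infty$; and the cutoff $\sigma_A$ and a quenched Breiman-type step naturally produce the factor $\lim_{A\to\infty}\E\Z^\alpha_{\sigma_A}\mathbf{1}_{\{\sigma_A<\tau_1\}}$, with Lemmas~\ref{lem:zmom} and~\ref{lem:Wnu} supplying the requisite moment control.

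Two points in your sketch are vague in ways that hide real content. First, you describe the post-$\sigma_A$ lineages as ``$z$ conditionally independent single-particle lineages through the fresh environment'' but do not engage with the consequence: because all $z$ lineages share the \emph{same} environment, their sum is governed by the quenched mean, which scales like $z$ times a perpetuity; under the annealed law this yields a $\Z_{\sigma_A}^\alpha$ factor, not a $\Z_{\sigma_A}$ factor as would arise from genuinely i.i.d.\ heavy-tailed summands. This is precisely the content of the bound $\P\{\sum_j Y^\ast_{j,S_k}>x\mid\Z_k,\xi_k\}\leq A\Z_k^\alpha x^{-\alpha}$ invoked in Lemma~\ref{lem:aux}, and it is the crux of the Kesten--Kozlov--Spitzer mechanism, so it deserves to be stated explicitly rather than absorbed into a passing reference to Breiman. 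Second, attributing the $\E\tau_1$ prefactor to ``the random-sum tail principle'' is not quite right: the within-excursion blocks $\Z_1,\Z_2,\ldots$ are dependent, so the standard random-sum principle for i.i.d.\ regularly varying summands does not apply directly. What is actually used, in the spirit of Lemma~\ref{future}, is that $\tau_1$ does not depend on the future of the driving sequence together with a one-big-jump localisation, which then yields $\E\tau_1 = \sum_{j\ge 1}\P\{\tau_1\ge j\}$ as a multiplicative factor. Finally, the ``size-biased martingale estimate'' you gesture at for the uniformity in $A$ is an assertion, not an argument; a careful proof would need to produce a concrete tail bound for $\Z_{\sigma_A}\mathbf{1}_{\{\sigma_A<\tau_1\}}$ uniformly in $A$, presumably via Lemma~\ref{lem:zmom}.
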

The assertion regarding the form of $\mathcal{C}_Z(\alpha)$ can be
derived from the proof of Lemma 5.5 in
\cite{BurDysIksMarRoi:2018+}. Note that an explicit expression for
$\mathcal{C}_Z(\alpha)$ is not known.

\begin{lem}[Proposition 5.7 in \cite{BurDysIksMarRoi:2018+}]\label{prop:tail_main}
The following asymptotic relations hold.
\begin{itemize}
\item[{\rm (C1)}] If $(\rho 1)$ holds for some $\alpha\in(0,2]$, either $\E\xi^{2\alpha}<\infty$ or~\eqref{eq:reg_var_xi}
holds with $\beta=2\alpha$, $\lim_{t\to\infty}\ell(t)=0$, and
$\E(\rho\xi)^\alpha <\infty$, then
$$\P\{\overline{\W}_{\tau_1}>t\}~\sim~ (\E\tau_1) \mathcal{C}_Z(\alpha)t^{-\alpha},\quad t\to\infty,$$
where $\mathcal{C}_Z(\alpha)$ is the same constant as in Lemma
\ref{prop:main2}.
\item[{\rm (C2)}] If $(\rho 1)$ holds for some $\alpha\in(0,2)$, \eqref{eq:reg_var_xi} holds with $\beta=2\alpha$
and $\lim_{t\to\infty}\ell(t)=\mathcal{C}_{\ell}\in(0,\infty)$,
$\E\rho^{\alpha+\varepsilon}<\infty$ and $\E\rho^\alpha
\xi^{\alpha+\varepsilon}<\infty$ for some $\varepsilon>0$, then
$$\P\{\overline{ \W}_{\tau_1}>t\}~\sim~ (\E \tau_1)\left((\E \vartheta^{\alpha})\mathcal{C}_{\ell}+\mathcal{C}_Z(\alpha)\right) t^{-\alpha},
\quad t\to\infty.$$
\item[{\rm (C3)}] If $(\rho 1)$ holds for some $\alpha\in(0,2]$, \eqref{eq:reg_var_xi} holds with $\beta=2\alpha$
and $\lim_{t\to\infty}\ell(t)=\infty$, and
$\E(\rho\xi)^\alpha<\infty$, then
$$\P\{\overline{\W}_{\tau_1}>t\}~\sim~ (\E\tau_1)(\E
\vartheta^{\alpha})t^{-\alpha}\ell(t^{1/2}),\quad t\to\infty.$$
\item[{\rm (C4)}] If $(\rho 2)$ holds, \eqref{eq:reg_var_xi} holds for some $\beta\in(0,4)$ such that $\beta/2\in\mathcal{I}$
and $\E (\rho\xi)^{\beta/2+\varepsilon}<\infty$ for some
$\varepsilon>0$, then $$\P\{\overline{\W}_{\tau_1}>t\}~\sim~
(\E\tau_1)(\E \vartheta^{\beta/2})t^{-\beta/2}\ell(t^{1/2}),\quad
t\to\infty.$$
\end{itemize}
\end{lem}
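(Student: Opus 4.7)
The approach is to exploit the decomposition \eqref{eq:wp3}:
$$\overline{\W}_{\tau_1} = A + B, \quad \text{where } A := \sum_{i=1}^{\tau_1}\W^0_i \text{ and } B := \sum_{i=1}^{\tau_1}\bigl(\Z_i + \W^\downarrow_i\bigr),$$
and to study the tails of $A$ and $B$ separately before combining them. The tail of $B$ is supplied directly by Lemma~\ref{prop:main2}: under $(\rho 1)$ with exponent $\alpha$ and the accompanying moment conditions, $\P\{B > t\} \sim (\E\tau_1)\mathcal{C}_Z(\alpha)t^{-\alpha}$. The remaining task is to pin down the tail of $A$ and then to weigh it against that of $B$.

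For the tail of $A$, Lemma~\ref{lem:lu1} furnishes the marginal tail of each summand, $\P\{\W^0_1 > t\} \sim (\E\vartheta^{\beta/2})t^{-\beta/2}\ell(t^{1/2})$. To promote this into a tail for the random sum, I would observe that each $\W^0_i$ depends only on $\xi_i$ (the Geom$(1/2)$ reproduction between marked sites being deterministic), that $\tau_1$ does not depend on the future of $(\xi_j)_{j\in\N}$ (the very feature already used in Lemma~\ref{future}), and that $\tau_1$ has a finite exponential moment by Lemma~\ref{lem:nu}. A standard subexponential stopped-sum argument in the spirit of Theorem~1 in \cite{Korshunov:2009} then yields
$$\P\{A > t\}~\sim~(\E\tau_1)(\E\vartheta^{\beta/2})t^{-\beta/2}\ell(t^{1/2}), \quad t\to\infty.$$

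Each of (C1)--(C4) is then obtained by weighing these two tails against each other and combining them via the principle of one big jump. In (C1) the tail of $A$ is negligible compared to that of $B$: either directly from the asymptotic above when $\ell(t)\to 0$, or via a moment argument based on \eqref{eq:w57} when one only assumes $\E\xi^{2\alpha}<\infty$; so $B$ dominates. In (C2) both tails are of exact order $t^{-\alpha}$ with positive coefficients and add to the stated constant. In (C3) the factor $\ell(t)\to\infty$ tips the balance the other way and $A$ dominates. In (C4), condition $(\rho 2)$ with $\beta/2\in\mathcal{I}$ (an open interval) yields $\E\rho^{\beta/2+\eps}<1$ for some small $\eps>0$, so Lemma~\ref{lem:Wnu} applied with $s=\beta/2+\eps$ furnishes $\E B^{s_0}<\infty$ for some $s_0>\beta/2$; Markov's inequality then buries $B$ under $A$.

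The main obstacle is the additivity of tails in case (C2), because $A$ and $B$ are \emph{not} independent: a single large $\xi_i$ inflates $\W^0_i$ but also swells the terminal block size $\Z_i$ (of order $\xi_i$ through the critical fluctuations), which in turn fuels $\W^\downarrow_{i+1}$ in the next block. To rule out a joint heavy contribution, I would fix $\eps>0$ and estimate
$$\P\bigl\{A > (1-\eps)t,\, B > \eps t\bigr\} = o(t^{-\alpha}),$$
by conditioning on $\tau_1$, identifying which single index $i$ and which among $\W^0_i$, $\Z_i$, $\W^\downarrow_i$ carries each of the two excesses, and using the moment hypothesis $\E(\rho\xi)^{\alpha+\eps}<\infty$ together with the regular variation of $\xi$ to control the cross term. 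Combined with the trivial bound $\P\{A+B>t\} \geq \max\{\P\{A>t\},\,\P\{B>t\}\}$, this yields the claimed additivity and completes~(C2).
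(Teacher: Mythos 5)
The paper itself does not prove this lemma: it is stated verbatim as ``Proposition~5.7 in \cite{BurDysIksMarRoi:2018+}'' and imported by citation, so there is no in-house proof to compare against. Your reconstruction assembles exactly the ingredients the paper makes available for it, and the route is the natural one: decompose $\overline{\W}_{\tau_1}=A+B$ via \eqref{eq:wp3}, use Lemma~\ref{lem:lu1} for the tail of a single summand $\W^0_1$ and the fact that $\tau_1$ does not depend on the future of the iid sequence $(\xi_i,\W^0_i)$ to lift that to the stopped sum $A$ (the Korshunov-type argument mirrors Lemma~\ref{future}); get the tail of $B$ from Lemma~\ref{prop:main2}; use Lemma~\ref{lem:Wnu} plus Markov to bury $B$ under $A$ in (C4); and combine by one big jump. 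The observation that $(\rho 2)$ with $\beta/2\in\mathcal{I}$ (open) supplies $\E\rho^{\beta/2+\eps}<1$, which feeds Lemma~\ref{lem:Wnu} with $s=\beta/2+\eps$, is also correct (one needs $\eps<\beta/6$ so that $\E\xi^{3s/2}<\infty$ as well). This is consistent with the machinery the paper does exhibit in the neighbouring Propositions~\ref{prop:wp1} and~\ref{prop:w1}.

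The one genuine rough spot is the shape of the cross-term condition in (C2). To deduce $\P\{A+B>t\}\sim\P\{A>t\}+\P\{B>t\}$ for \emph{dependent} nonnegative $A,B$ with tails of the same regularly varying order, the condition you actually need is $\P\{A>\delta t,\,B>\delta t\}=o(t^{-\alpha})$ for every $\delta>0$: the lower bound uses $\P\{A+B>t\}\geq\P\{A>t\}+\P\{B>t\}-\P\{A>t,B>t\}$, and the upper bound uses the inclusion $\{A+B>t\}\subset\{A>(1-\delta)t\}\cup\{B>(1-\delta)t\}\cup\{A>\delta t,\,B>\delta t\}$. Your proposed condition $\P\{A>(1-\eps)t,\,B>\eps t\}=o(t^{-\alpha})$ is not enough: in the resulting upper bound the term $\P\{B>\eps t\}$ survives, contributing $\eps^{-\alpha}$ times the right coefficient and not letting $\eps\to0$ close the estimate. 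The underlying mechanism you describe for proving the (correctly reformulated) cross-term bound --- conditioning on $\tau_1$, localizing the big jump at a single index, and controlling the remaining blocks via $\E(\rho\xi)^{\alpha+\eps}<\infty$ and Lemmas~\ref{lem:zmom}--\ref{lem:Wnu} --- is nevertheless exactly the kind of single-big-jump bookkeeping the paper performs in the proof of Proposition~\ref{prop:wp1}, so the repair is a matter of bookkeeping rather than a missing idea.
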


\subsection{The joint behavior}

The asymptotic behavior of $t\P\big\{ S_{\tau_1} >
g(t)x_1,\overline \W_{\tau_1} > f(t)x_2 \big\}$ as $t\to\infty$ is
determined by the mutual interplay of the distributions of $\xi$
and $\rho$. While Proposition \ref{prop:wp1} treats the situation
in which the distribution of $\xi$ dominates, Proposition
\ref{prop:w1} is concerned with the case in which the
contributions of the distributions of $\xi$ and $\rho$ are
comparable.
\begin{prop} \label{prop:wp1}
Assume that the assumptions of Theorem \ref{thm:main11T} are
satisfied for $\beta\in(0,1]$, with the exception that condition
$(\xi\rho 1)$ is not required. Then, for $x_1, x_2>0$,
$$ \P\big\{S_{\tau_1} > t x_1, \overline{ \W}_{\tau_1} > t^2 x_2\big\}~ \sim~(\E \tau_1) \E\big[ \min\big(x_1^{-\beta},
x_2^{-\beta/2}\vartheta^{\beta/2}\big)\big] \ell(t)
t^{-\beta},\quad t\to\infty,$$ where a random variable $\vartheta$
has the Laplace transform given by \eqref{varthe}.
\end{prop}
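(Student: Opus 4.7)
The proof rests on the single-big-jump principle: since $\P\{\xi>t\}$ is regularly varying of index $-\beta\in[-1,0)$, an event $\{S_{\tau_1}>tx_1\}$ is typically caused by a single exceptionally large $\xi_K$, and on such an event the corresponding block contribution $\W_K^0$ --- which conditionally on $\xi_K$ has the law of $\Wc_{\xi_K-1}$, the total population of a critical Galton--Watson process with unit immigration run for $\xi_K-1$ generations --- is of order $\xi_K^2\vartheta^{(K)}$ by the Pakes limit \eqref{distr_conv}. Once this heuristic is rigorously justified, the proposition reduces to a direct computation using regular variation of $\P\{\xi>\cdot\}$.

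\textbf{Step 1 (Isolating the dominant block).} Using \eqref{eq:wp3}, write $\overline{\W}_{\tau_1}=\sum_{i=1}^{\tau_1}\W_i^0+R$ with $R:=\sum_{i=1}^{\tau_1}(\W_i^\downarrow+\Z_i)$, and show that for every $\varepsilon>0$,
\[
\P\{S_{\tau_1}>tx_1,\;R>\varepsilon t^2\}=o(\ell(t)t^{-\beta}),\qquad t\to\infty.
\]
In case (A), Lemma~\ref{lem:Wnu} applied with some $s_0>\beta/2$ (available because $\beta/2$ lies in the open interval $\mathcal{I}$ and $\E(\rho\xi)^{\beta/2+\varepsilon}<\infty$) yields $\P\{R>\varepsilon t^2\}=O(t^{-2s_0})=o(t^{-\beta})$. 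In case (B1), the marginal bound $\P\{R>\varepsilon t^2\}\sim C\varepsilon^{-\beta/2}t^{-\beta}$ from Lemma~\ref{prop:main2} is already $o(\ell(t)t^{-\beta})$ since $\ell(t)\to\infty$. Only case (B2) requires a genuine decorrelation argument: one exploits that large $R$ originates from atypical $\rho$-behaviour whereas large $S_{\tau_1}$ originates from atypical $\xi$-behaviour, quantified by the joint moment hypothesis $\E(\rho\xi)^{\beta/2+\varepsilon}<\infty$.

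\textbf{Step 2 (Single-big-jump reduction and future-independence).} Next I prove
\[
\P\Bigl\{S_{\tau_1}>tx_1,\;\textstyle\sum_i\W_i^0>t^2x_2\Bigr\}\sim\sum_{k\ge 1}\P\{\tau_1\ge k,\,\xi_k>tx_1,\,\W_k^0>t^2x_2\}.
\]
Configurations with two or more indices $k\le\tau_1$ satisfying $\xi_k>\delta t$ contribute $O(\P\{\xi>t\}^2)=o(\ell(t)t^{-\beta})$. On the complementary event containing a single such index $K$, the moment relation \eqref{eq:w57} gives $\E[\sum_{i\ne K}\W_i^0\mid(\xi_i)]\lesssim\sum_{i\ne K}\xi_i^2\le\delta t\cdot S_{\tau_1}$, and letting $\delta\downarrow 0$ shows $\sum_{i\ne K}\W_i^0$ is of smaller order than $t^2$. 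Since $\{\tau_1\ge k\}$ is measurable with respect to the history up to generation $S_{k-1}$, it is independent of $(\xi_k,\W_k^0)$, and $(\xi_k,\W_k^0)\od(\xi,\Wc_{\xi-1})$ with $\xi$ independent of $\Wc$. The right-hand side above thus equals $(\E\tau_1)\,\P\{\xi>tx_1,\,\Wc_{\xi-1}>t^2x_2\}$.

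\textbf{Step 3 (Explicit evaluation).} For $\xi=m$ with $m/t\to y>x_1$ the convergence \eqref{distr_conv}, upgraded to uniform convergence on compact sets (since $\vartheta$ has a continuous distribution function), yields $\P\{\Wc_{m-1}>t^2x_2\mid\xi=m\}\to\P\{y^2\vartheta>x_2\}$, while regular variation of $\P\{\xi>\cdot\}$ gives the vague convergence of $\ell(t)^{-1}t^\beta\,d\P\{\xi>ty\}$ to $\beta y^{-\beta-1}\,dy$ on $(0,\infty)$. Combining,
\[
\P\{\xi>tx_1,\,\Wc_{\xi-1}>t^2x_2\}\sim\ell(t)t^{-\beta}\int_{x_1}^\infty\P\{y^2\vartheta>x_2\}\,\beta y^{-\beta-1}\,dy,
\]
and Fubini together with $\int_a^\infty\beta y^{-\beta-1}\,dy=a^{-\beta}$, applied with $a=\max(x_1,(x_2/\vartheta)^{1/2})$, evaluates the integral as $\E[\min(x_1^{-\beta},\vartheta^{\beta/2}x_2^{-\beta/2})]$, yielding the claim. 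The main technical difficulty lies in Step~1: in case (B2) the marginal tail of $R$ is already of order $t^{-\beta}$ by Lemma~\ref{prop:tail_main}\,(C2), so the negligibility must be extracted at the level of the joint tail with $S_{\tau_1}$, and the hypothesis $\E(\rho\xi)^{\beta/2+\varepsilon}<\infty$ is what makes this possible.
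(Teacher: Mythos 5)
Your plan coincides in outline with the paper's: decompose $\overline{\W}_{\tau_1}=\W^0+R$ via \eqref{eq:wp3}, isolate the single big jump $\xi_K$, exploit the future-independence of $\tau_1$ and the distributional identity $(\xi_k,\W^0_k)\od(\xi,\Wc_{\xi-1})$, and evaluate the surviving term. Step~3 is a genuinely different route: you compute $\P\{\xi>tx_1,\Wc_{\xi-1}>t^2x_2\}$ by a vague-convergence argument for the normalized law of $\xi$ together with the uniform version of \eqref{distr_conv}, whereas Lemma~\ref{lem:wp2} in the paper rewrites the event as $\{\varsigma>\max(tx_1,v_{t^2x_2})\}$ with $v_x=\inf\{k:\Wc_k>x\}$ and then proves uniform integrability of $R(\max(tx_1,v_{t^2x_2}))/R(t)$ via Potter's bound. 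Your route is arguably slicker, but you would still need to control the tail of the integral (the behaviour for large $y$, equivalently for $\vartheta$ near~$0$), which is exactly what the uniform-integrability argument supplies; as written this is glossed over. Similarly, in Step~2 the bound $\E[\sum_{i\neq K}\W_i^0\mid(\xi_i)]\lesssim\delta t\cdot S_{\tau_1}$ is an in-expectation statement, whereas what is needed is a probability estimate $o(\ell(t)t^{-\beta})$; the paper's Lemma~\ref{lem:k2} achieves this via a truncation $\tau_1<C\log t$, $\xi_i\le t^{2/3}x_1$, $\W_i^0\le t^{5/3}x_2$ for $i\neq T$ together with \eqref{eq:w57} and Markov's inequality. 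These are repairable technical gaps.

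The substantive gap is Step~1, case (B2). You correctly observe that in case (A) a Markov bound $O(t^{-2s_0})$ with $s_0>\beta/2$ suffices, and that in case (B1) the marginal $\P\{R>\varepsilon t^2\}\asymp t^{-\beta}$ is already $o(\ell(t)t^{-\beta})$ because $\ell\to\infty$. But in case (B2), where $\ell$ has a finite positive limit, the marginal tail of $R$ is of exactly the target order $t^{-\beta}$, so one cannot avoid establishing the joint estimate
\[
\P\{S_{\tau_1}>tx_1,\ R>t^2x_2\}=o(\ell(t)t^{-\beta}),
\]
and you only assert, without argument, that the hypothesis $\E(\rho\xi)^{\beta/2+\varepsilon}<\infty$ makes this possible. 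The paper's proof at this point is not a trivial decorrelation: it reduces (after the same $T\le\tau_1<C\log t$ truncation) to showing that each of
\[
I_1=\P\Bigl\{\xi_T>tx_1,\ \sum_{i<T}(\Z_i+\W_i^\downarrow)>t^2x_2\Bigr\},\quad
I_2=\P\{\xi_T>tx_1,\ \W_T^\downarrow>t^2x_2\},
\]
\[
I_3=\P\Bigl\{\xi_T>tx_1,\ \Z_T+\sum_{j\le\Z_T}Y_{j,S_T}^\ast>t^2x_2\Bigr\},\quad
I_4=\P\Bigl\{\xi_T>tx_1,\ \sum_{k>S_T}Y_k>t^2x_2\Bigr\}
\]
is $o(\ell(t)t^{-\beta})$, and each requires its own argument (Lemma~\ref{lem:aux} uses, respectively, the independence from the past of $\xi_T$, the conditional Jensen bound on $\W_T^\downarrow$ via $\E_\omega D_i^{(k)}=\xi_k-1$, the H\"older inequality to spend the $\varepsilon$ in $\E(\rho\xi)^{\beta/2+\varepsilon}<\infty$, and independence of $\xi_T$ from the future, respectively). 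Your proposal names the difficulty but does not contain the idea that resolves it; that is a genuine gap, and it sits precisely at the technical heart of the proposition.
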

\begin{prop}\label{prop:w1}
Assume that the assumptions of Theorem \ref{thm:main2T} are
satisfied for $\beta\in(0,1]$, with the exception that condition
$(\xi\rho 2)$ is not required. Then
$$\lim_{t\to \infty} t \P\big\{S_{\tau_1} > a(t) , \overline{ \W}_{\tau_1} >t^{1/\alpha} \big\} =0.$$
\end{prop}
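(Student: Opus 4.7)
The proof rests on the principle of one big jump for $S_{\tau_1}$ combined with a spatial decomposition of $\overline{\W}_{\tau_1}$, with the critical ingredient being the separation of scales $a(t)^2=o(t^{1/\alpha})$ available in both cases~(C) and (B3). Fix $\delta\in(0,1)$ and let $M_{\tau_1}:=\max_{1\le k\le\tau_1}\xi_k$. Then
\begin{equation*}
\P\{S_{\tau_1}>a(t),\overline{\W}_{\tau_1}>t^{1/\alpha}\}\le\P\{S_{\tau_1}>a(t),M_{\tau_1}\le\delta a(t)\}+\sum_{k\ge 1}\P\{\xi_k>\delta a(t),\tau_1\ge k,\overline{\W}_{\tau_1}>t^{1/\alpha}\}.
\end{equation*}
The first term is $o(\P\{\xi>a(t)\})=o(1/t)$ by the classical single big jump principle applied to the randomly stopped sum $S_{\tau_1}$; the exponential moments of $\tau_1$ from Lemma~\ref{lem:nu} and the fact that $\tau_1$ does not depend on the future of $(\xi_i)$ make this standard.

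For each $k$ in the remaining sum, I would decompose $\overline{\W}_{\tau_1}=A_k+B_k+C_k$ with $A_k$ the contribution from generations $1,\ldots,S_{k-1}-1$, $B_k=\sum_{j=S_{k-1}}^{S_k}Z_j$ the contribution of the $k$th block (including the drifts at its endpoints), and $C_k$ the contribution from generations $S_k+1,\ldots,S_{\tau_1}$. Then $\{\overline{\W}_{\tau_1}>t^{1/\alpha}\}$ is covered by $\{A_k>t^{1/\alpha}/3\}\cup\{B_k>t^{1/\alpha}/3\}\cup\{C_k>t^{1/\alpha}/3\}$. For the $A_k$-contribution, $A_k$ is measurable with respect to $\sigma((\xi_i,\rho_i)_{i\le k-1})$ and hence independent of $(\xi_k,\rho_k)$. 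Some care is needed because $\{\tau_1\ge k\}$ depends on $\rho_k$ (through $\Z_{k-1}$, which is obtained via the $\rho_k$-biased reproduction step at generation $S_{k-1}$); conditioning on $\rho_k$ and exploiting $A_k\le\overline{\W}_{\tau_1}$ on $\{\tau_1\ge k\}$ yields a bound of the form $\E\bigl[\P\{\xi>\delta a(t)\mid\rho_k\}\cdot\tau_1\,{\bf 1}_{\{\overline{\W}_{\tau_1}>t^{1/\alpha}/3\}}\bigr]$, which is $o(1/t)$ after summation by dominated convergence together with the moment hypothesis $\E(\rho\xi)^{\alpha+\varepsilon}<\infty$. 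The $C_k$-contribution is handled analogously by conditioning on $\Z_k$ and using that the shifted environment $(\xi_i,\rho_i)_{i>k}$ is independent of $(\xi_k,\rho_k)$.

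The decisive case is $B_k$. Conditional on $\Z_{k-1}$, $\xi_k=n$, $\rho_k$, and $\rho_{k+1}$, $B_k$ has the law of the total progeny of a critical Galton--Watson process with unit immigration run over $\xi_k-1$ intermediate generations starting from $\Z_{k-1}+1$ particles, augmented by two ${\rm Geom}(\lambda_k)$- and ${\rm Geom}(\lambda_{k+1})$-biased reproduction steps at the endpoints. The scaling \eqref{distr_conv}, moment convergence \eqref{eq:w57}, exponential moments of $\vartheta$ implicit in \eqref{varthe}, the moment bounds of Lemma~\ref{lem:zmom}, and the hypothesis $\E(\rho\xi)^{\alpha+\varepsilon}<\infty$ together produce a Markov-type conditional tail bound for $B_k$ of the form $C(n^2+\Z_{k-1}^2)^p(1+\rho_k+\rho_{k+1})^p\,t^{-p/\alpha}$ for a suitable exponent $p$. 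Integrating against the joint distribution of $(\xi_k,\rho_k,\rho_{k+1})$, the key separation $a(t)^2=o(t^{1/\alpha})$, which is immediate in case~(C) from $\alpha<\beta/2$ and in case~(B3) via $a(t)^2/t^{1/\alpha}=\ell(a(t))^{2/\beta}\to 0$, yields $o(1/t)$ after summing over $k$.

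The main technical obstacle is the absence of $(\xi\rho 2)$: without this decoupling condition, the conditional joint distribution of $(\xi_k,\rho_k)$ given that $\xi_k$ is large may concentrate $\rho_k$ at anomalously high values, and the moment bound $\E(\rho\xi)^{\alpha+\varepsilon}<\infty$ combined with H\"older-type inequalities must be used to control the resulting joint-tail contributions in the $B_k$-estimate. In case~(B3) there is an additional subtlety: the separation of scales is only slowly varying, so Potter-type bounds must be carefully invoked to preserve the $o(1/t)$ rate.
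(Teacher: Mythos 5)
Your overall strategy (localise the big jump in $\xi$, cut $\tau_1$ off at $O(\log t)$, decompose $\overline{\W}_{\tau_1}$ around the big block, and close with Markov/moment bounds plus the separation of scales $a(t)^2=o(t^{1/\alpha})$) is the same in spirit as the paper's, and your identification of $a(t)^2=o(t^{1/\alpha})$ as the decisive inequality is correct. Two smaller points first: your worry that $\{\tau_1\ge k\}$ depends on $\rho_k$ contradicts the paper's conventions --- in the proof of Lemma~\ref{lem:aux} the paper uses $\E_\omega\Z_k=\rho_k\E_\omega\Z_{k-1}+\rho_k\xi_k$, so $\Z_{k-1}$ is treated as measurable with respect to $\sigma\big((\xi_i,\rho_i)_{i\le k-1}\big)$, and hence $\xi_k$ is independent of $\{\tau_1\ge k\}$; also, the claimed conditional moment bound $C(n^2+\Z_{k-1}^2)^p(1+\rho_k+\rho_{k+1})^p$ for $B_k$ is doubtful as stated --- the $\Z_{k-1}$ initial particles contribute roughly $\Z_{k-1}$ (after subadditivity of $s\mapsto s^p$ for $p\le 1$), not $\Z_{k-1}^{2p}$, and the admissible $p$ is silently constrained to $p<\alpha\le 1/2$ (so that $\sup_k\E\Z_{k-1}^p<\infty$) and $2p<\beta$ (so Karamata applies to $\E[\xi^{2p}{\bf 1}_{\{\xi>a(t)\}}]$).

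The substantive gap is in case~(B3), and it is caused by your decomposition choice. You place $\W_k^0$ inside $B_k$ and then sum over the $O(\log t)$ candidate locations $k$ of the big jump. By Lemma~\ref{lem:lu1} and the scaling \eqref{distr_conv}, one has $\P\{\xi>\delta a(t),\,\W_1^0>t^{1/\alpha}/3\}\asymp x_2^{-\beta/2}\,t^{-1}$ with $x_2=t^{1/\alpha}/a(t)^2$. In case~(C) this $x_2$ grows polynomially and the $\log t$ factor from summing over $k$ is absorbed, but in case~(B3) one has $x_2^{-\beta/2}=\ell(a(t))(1+o(1))$, which tends to zero only as a slowly varying function; there is no reason why $\ell(a(t))\log t\to 0$, and in fact it can diverge (take $\ell(s)=1/\log\log s$). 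So the bound ``$o(1/t)$ after summing over $k$'' does not follow in~(B3), and Potter's bounds alone will not repair it. The paper avoids precisely this by first isolating $\W^0=\sum_{i\le\tau_1}\W^0_i$ and showing \emph{marginally} that $\P\{\W^0>t^{1/\alpha}\}=o(t^{-1})$ --- no big-jump localisation, hence no $\log t$ loss --- and only then running the per-$k$ decomposition on $\overline{\W}_{\tau_1}-\W^0$, each of whose four pieces (cf.\ Lemma~\ref{lem:aux}) admits a uniform $o(t^{-1-\gamma})$ bound with genuine $\gamma>0$. You need to import this step: treat $\W^0$ globally (or equivalently restrict to a single designated big-jump index $T$ so that the events are disjoint and the sum becomes a probability of a union dominated by $\P\{\W^0>t^{1/\alpha}\}$), rather than summing the $\W_k^0$ contribution over $k$.
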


Our proofs of both propositions rely on decomposition
\eqref{eq:wp3} and `the principle of one big jump' which is
commonly used when analyzing random variables with regularly
varying distribution tails. In view of \eqref{eq:k35} the random
variable $S_{\tau_1}$ takes a large value if and only if at least
one of $\xi_1,\xi_2,\ldots,\xi_{\tau_1}$ is large. We shall choose
a stopping time $T=T(t)$ such that $\xi_T \approx \max _{1\leq k
\leq \tau_1} \xi _k \approx S_{\tau_1}$ on the event
$\{\max_{1\leq k \leq \tau_1} \xi _k>t\big\}$ and then show that
$\W^0_T$ dominates all the other terms in decomposition
\eqref{eq:wp3}. According to \eqref{distr_conv} the variable
$\W^0_T$  should be of magnitude $t^2$ on the event $\{ \max
_{1\leq k \leq \tau_1} \xi _k>t\big\}$ (see Lemma \ref{lem:k2} for
more details). Summarizing, it is plausible that
\begin{equation}\label{eq:intuition_joint_asymp}
\P\big\{S_{\tau_1} > t x_1, \overline{ \W}_{\tau_1} > t^2 x_2
\big\}~\approx~\P\big\{\xi_T > t x_1, \W^0_T > t^2 x_2
\big\},\quad t\to\infty.
\end{equation}

The rigorous proofs of Propositions \ref{prop:wp1} and
\ref{prop:w1} are similar to the proof of Proposition 6.1 in
\cite{BurDysIksMarRoi:2018+}. However, since we need a joint,
rather than marginal, asymptotic behavior, the details are more
involved. We start with a lemma that provides the asymptotic
behavior of the right-hand side in
\eqref{eq:intuition_joint_asymp}.
\begin{lem}\label{lem:wp2}
Let $\varsigma$ be an integer-valued random variable independent
of $(\Wc_n)_{n\in \N_0}$ and such that
$$\P\{\varsigma>t\}~\sim~ t^{-\beta}\ell(t),\quad t\to\infty$$ for some $\beta>0$ and some $\ell$ slowly varying at $\infty$. Then, for $x_1, x_2>0$,
$$\P\big\{\varsigma > tx_1, \Wc_{\varsigma}\ > t^2 x_2\big\}~ \sim~ \E\big[\min\big(x_1^{-\beta}, x_2^{-\beta/2}\vartheta^{\beta/2}\big)\big]
\ell(t)t^{-\beta}, \quad t\to \infty,$$ where $\vartheta$ is a
random variable with the Laplace transform given in
\eqref{varthe}.
\end{lem}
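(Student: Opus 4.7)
The plan is to exploit the independence of $\varsigma$ and $(\Wc_n)_{n\in\N_0}$ together with the convergence $n^{-2}\Wc_n \dod \vartheta$ from \eqref{distr_conv}. Conditioning on $\varsigma$, I rewrite
\begin{equation*}
\P\{\varsigma > tx_1,\,\Wc_\varsigma > t^2 x_2\} = \int_{(x_1,\infty)} g_t(u)\,\mu_t(\ud u),
\end{equation*}
where $g_t(u):=\P\{\Wc_{\lfloor tu\rfloor} > t^2 x_2\}$ and $\mu_t$ is the law of $\varsigma/t$. After normalization by $\ell(t)^{-1}t^{\beta}$, the hypothesis $\P\{\varsigma>t\}\sim t^{-\beta}\ell(t)$ and the uniform convergence theorem for regularly varying functions (Theorem~1.5.2 in \cite{bingham1989regular}) yield that $\nu_t(\ud u):=t^\beta\ell(t)^{-1}\mu_t(\ud u)$ converges weakly on $(x_1,\infty)$ to $\nu(\ud u):=\beta u^{-\beta-1}\ud u$; in particular the total masses converge to $x_1^{-\beta}$ and $\nu$ has no atom at $x_1$.

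Next, for each fixed $u>0$, rewriting
\begin{equation*}
g_t(u)=\P\Big\{\lfloor tu\rfloor^{-2}\Wc_{\lfloor tu\rfloor} > x_2\bigl(t/\lfloor tu\rfloor\bigr)^{2}\Big\},
\end{equation*}
I combine \eqref{distr_conv} with the fact that $\vartheta$ has an absolutely continuous law (clear from \eqref{varthe}, in view of the identification $M(1)\od 2\vartheta$ recalled at the end of Section~\ref{sec:strategy}) to deduce $g_t(u)\to g(u):=\P\{u^2\vartheta > x_2\}$. Since both $g_t$ and $g$ are nondecreasing in $u$ and $g$ is continuous, this pointwise convergence is locally uniform on $(x_1,\infty)$ by a Dini-type argument.

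To pass to the limit under the integral, I use a standard truncation: for $M>x_1$,
\begin{equation*}
\int_{M}^\infty g_t(u)\,\nu_t(\ud u) \le \nu_t((M,\infty)) \to M^{-\beta},
\end{equation*}
which is arbitrarily small for large $M$, while on $(x_1,M]$ the weak convergence of $\nu_t$ combined with the locally uniform convergence of $g_t$ to the bounded continuous $g$ gives $\int_{(x_1,M]} g_t\,\ud\nu_t \to \int_{(x_1,M]}g\,\ud\nu$. A short Fubini computation then identifies the limit:
\begin{equation*}
\int_{x_1}^\infty \P\{u^2\vartheta > x_2\}\,\beta u^{-\beta-1}\,\ud u = \E\Bigl[\max\bigl(x_1,\sqrt{x_2/\vartheta}\bigr)^{-\beta}\Bigr] = \E\bigl[\min\bigl(x_1^{-\beta},\,x_2^{-\beta/2}\vartheta^{\beta/2}\bigr)\bigr],
\end{equation*}
which is the stated asymptotics. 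The main obstacle is justifying the interchange of limits in $u$ and $t$; the crucial observation is that $\Wc_n$ is nondecreasing in $n$, which makes $g_t$ monotone in $u$ and thereby upgrades the pointwise consequence of \eqref{distr_conv} to the locally uniform convergence required in the truncation step.
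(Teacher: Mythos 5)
Your proof is correct, but it runs in the opposite conditioning order from the paper's. The paper fixes the branching side first: it introduces $v_x=\inf\{k:\Wc_k>x\}$, rewrites the joint probability as $\E R\bigl(\max(tx_1,v_{t^2x_2})\bigr)$ with $R(y)=\P\{\varsigma>y\}$, invokes $t^{-1/2}v_t\dod\vartheta^{-1/2}$ (inversion of \eqref{distr_conv}), and then passes from distributional convergence of $R(\max(tx_1,v_{t^2x_2}))/R(t)$ to convergence of expectations via a uniform integrability bound imported from the proof of Proposition~6.1 in \cite{BurDysIksMarRoi:2018+}. You instead condition on $\varsigma$, turn the problem into $\int g_t\,\ud\nu_t$ with $\nu_t$ the normalized law of $\varsigma/t$, establish vague convergence $\nu_t\to\nu$ on $(x_1,\infty)$ from regular variation, upgrade the pointwise limit $g_t\to g$ to locally uniform convergence via monotonicity in $u$ and continuity of $g$ (this is really P\'olya's theorem rather than Dini's, but the argument is sound), and close with a truncation plus Fubini. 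The trade-off: your route is self-contained --- it does not lean on the external uniform-integrability lemma --- but it does require the absolute continuity of $\vartheta$ to get continuity of $g$, which you correctly supply from $M(1)\od 2\vartheta$. The paper's route is shorter once the $v_t$ convergence and the uniform integrability are granted, and it avoids any regularity hypothesis on $\vartheta$'s distribution beyond what \eqref{distr_conv} gives. Both identify the limit as $\E[\max(x_1,\sqrt{x_2/\vartheta})^{-\beta}]=\E[\min(x_1^{-\beta},x_2^{-\beta/2}\vartheta^{\beta/2})]$ by the same elementary computation.
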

\begin{proof}
Put $$v_x = \inf\{k\in \N:\; \Wc_k > x\}, \quad x>1.$$ Since
$\Wc_n$ is monotone, it diverges to $+\infty$ a.s. This ensures
that $v_x$ is finite a.s. For fixed $x_1, x_2>0$ and sufficiently
large $t$, $$\P\big\{\varsigma > tx_1, \Wc_{\varsigma}\ > t^2
x_2\big\}=\P\big\{\varsigma >\max(tx_1, v_{t^2x_2})\big\}=\E R
\big(\max(tx_1, v_{t^2x_2})\big),$$ where $R(y) = \P\{\varsigma >
y\}$ for $y>0$. An application of a standard inversion technique
to \eqref{distr_conv} yields
$$t^{-1/2}v_t ~ \dod~ \vartheta^{-1/2},\quad t\to\infty.$$ Hence,
$$t^{-1}\max(tx_1, v_{t^2 x_2})~\dod~ \max(x_1, x_2^{1/2}
\vartheta^{- 1/2}), \quad t\to\infty$$ and subsequently
$$\frac {R\big(\max(tx_1, v_{t^2 x_2})\big)}{R(t)}~\dod~
\big[\max(x_1, x_2^{1/2} \vartheta^{- 1/2})\big]^{-\beta} =
\min\big(x_1^{-\beta}, x_2^{-\beta/2}\vartheta^{\beta/2}\big),
\quad t\to\infty$$ having utilized the regular variation of $R$.
Write $$\frac {R\big(\max(tx_1, v_{t^2 x_2})\big)}{R(t)}\leq \frac
{R(tx_1)}{R(t)}+\frac{R\big(v_{t^2x_2})\big)}{R\big(tx_2^{1/2}\big)}\frac
{R\big(tx_2^{1/2}\big)}{R(t)}.$$ It is shown in the proof of
Proposition 6.1 in \cite{BurDysIksMarRoi:2018+} that the family
$\Big(\frac{R(v_{t^2x_2})}{R(tx_2^{1/2})}\Big)_{t\geq t_0}$ is
uniformly integrable for large enough $t_0>0$. This in combination
with Potter's bound for regularly varying functions (Theorem
1.5.6(iii) in \cite{bingham1989regular}) enables us to conclude
that the family $\Big(\frac{R(\max(tx_1, v_{t^2
x_2}))}{R(t)}\Big)_{t\geq t_1}$ is uniformly integrable for large
enough $t_1>0$. Therefore,
$$\frac{\P\{\varsigma>tx_1, \Wc_\varsigma > t^2 x_2\}}{\P\{\varsigma>t\}} = \frac{\E R (\max(tx_1, v_{t^2
x_2}))}{R(t)}~\to~ \E\big[ \min\big(x_1^{-\beta},
x_2^{-\beta/2}\vartheta^{\beta/2}\big) \big],\quad t\to\infty$$
which completes the proof.
\end{proof}

Some parts of the proofs of Propositions
\ref{prop:wp1} and \ref{prop:w1} can be treated along similar
lines. As a preparation, we prove an auxiliary result.
\begin{lem}\label{lem:aux}
Assume that either the assumptions of Theorem \ref{thm:main11T},
with the case (A) and the condition ($\xi\rho 1$) being excluded,
or Theorem \ref{thm:main2T}, with the condition ($\xi\rho 2$)
being excluded, are satisfied for $\beta\in (0,1]$. Let $\delta\in
(0,\alpha)$ and $b_1$, $b_2$ and $b_3$ be positive functions
diverging to $+\infty$. Then, as $t\to\infty$, uniformly in
$k\in\N$,
$$\P\bigg\{\xi_k>b_1(t), k\leq \tau_1, \sum_{i=1}^{k-1}(\Z_i + \W^{\downarrow}_i) > b_2(t)\bigg\}=O\big(\P\{\xi>b_1(t)\}b_2(t)^{-\alpha}\big);$$
$$\P \big\{\xi_k> b_1(t), \W^{\downarrow}_k>b_2(t)\big\}=O\big(\E\big[ \xi^\delta
{\bf 1}_{\{\xi>b_1(t)\}}] b_2(t)^{-\delta}\big)$$ and, uniformly
in $k=1,2,\ldots, [b_3(t)]$,
$$\P\bigg\{\xi_k >b_1(t), \: \Z_k +\sum_{j=1}^{\Z_k}Y_{j,S_k}^\ast >b_2(t)\bigg\}=
O\big(\P\{\xi>b_1(t)\}^{\varepsilon/(\alpha+\varepsilon)}b_2(t)^{-\alpha}
b_3(t)\big)$$ with the same $\varepsilon$ as defined in Theorems
\ref{thm:main11T} and \ref{thm:main2T}. Here, for $j\in\N$, $j\leq
\Z_k$, $Y_{j,S_k}^\ast$ denotes the total progeny of the $j$th
particle in the generation $S_k$.
\end{lem}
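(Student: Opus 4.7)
All three bounds follow a common recipe: isolate the heavy-tailed factor $\xi_k$ via (conditional) independence or Hölder's inequality, then control the branching functional by Markov's inequality using the moment/tail results of Section~\ref{sec:tails}. A minor subtlety throughout is the dependence of $\xi_k$ on the nearby drifts $\lambda_k,\lambda_{k+1}$, which themselves enter the branching functional; since $\lambda\in(0,1)$, however, conditioning on those drifts introduces only bounded correction factors and does not affect the structure of the argument.

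For the first bound I would observe that both $\{k\le\tau_1\}$ and $\sum_{i=1}^{k-1}(\Z_i+\W_i^\downarrow)$ are functions of $(\xi_i)_{i\le k-1}$, $(\lambda_i)_{i\le k}$ and the quenched Geom variables, hence conditionally independent of $\xi_k$ given $\lambda_k$. Factoring the joint probability in this way and dominating the partial sum on $\{k\le\tau_1\}$ by $\sum_{i=1}^{\tau_1}(\Z_i+\W_i^\downarrow)$, whose tail is $O(t^{-\alpha})$ by Lemma~\ref{prop:main2}, delivers the claim uniformly in $k\in\N$.

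For the second bound I would condition on $(\xi_k,\Z_{k-1})$. The sites strictly between $S_{k-1}$ and $S_k$ are unmarked, so the process there is a critical $\mathrm{Geom}(1/2)$ Galton--Watson process; since $\W_k^\downarrow$ only counts descendants of immigrants arriving at or before generation $S_{k-1}$, its conditional distribution given $(\xi_k=x,\Z_{k-1}=N)$ coincides with the total population in the first $x-1$ generations of this critical process started from $N$ ancestors, with conditional mean $N(x-1)$. Because $\delta<1$, Jensen's inequality yields
\[
\E\bigl[(\W_k^\downarrow)^\delta\mid \xi_k=x,\,\Z_{k-1}=N\bigr]\le N^\delta x^\delta.
\]
Since $\delta<\alpha$ implies $\E\rho^\delta<1$, Lemma~\ref{lem:zmom} supplies $\sup_k\E\Z_{k-1}^\delta<\infty$, and Markov at exponent $\delta$ combined with the conditional independence of $\xi_k$ from $\Z_{k-1}$ given $\lambda_k$ produces the stated bound.

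For the third bound, write $Z_k^*:=\Z_k+\sum_{j=1}^{\Z_k}Y^*_{j,S_k}$. The principal obstacle is that each $Y^*_{j,S_k}$, being the total progeny of a single particle in a subcritical BPRE with $\E\rho^\alpha=1$, has an exact power-$\alpha$ tail, so no moment of $Z_k^*$ of order $\ge\alpha$ is finite and the target exponents on $b_2(t)$ and $\P\{\xi>b_1(t)\}$ cannot be reached simultaneously by a direct Markov--Hölder argument. My plan is a single-big-jump decomposition: with $c:=b_2(t)/3$,
\[
\{Z_k^*>b_2(t)\}\subseteq\{\Z_k>c\}\cup\bigl\{\max_{j\le\Z_k}Y^*_{j,S_k}>c\bigr\}\cup\bigl\{{\textstyle\sum_{j=1}^{\Z_k}}Y^*_{j,S_k}{\bf 1}_{\{Y^*_{j,S_k}\le c\}}>c\bigr\}.
\]
On the first two events a union bound combined with Markov at exponent $\alpha$ and $\E\Z_k^\alpha\le Ck$ (Lemma~\ref{lem:zmom} with $s=\alpha$ and $\gamma=\E\rho^\alpha=1$), together with $\P\{Y^*>c\}=O(c^{-\alpha})$, gives a tail of order $kb_2(t)^{-\alpha}$; on the truncated third event all summands are bounded by $c$, so Markov at an exponent slightly above $\alpha$, together with the conditional iid structure of the $Y^*_{j,S_k}$ under the quenched law, produces a comparable bound. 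Hölder with conjugate exponents $(\alpha+\varepsilon)/\varepsilon$ and $(\alpha+\varepsilon)/\alpha$ finally splits off the $\P\{\xi>b_1(t)\}^{\varepsilon/(\alpha+\varepsilon)}$ factor, and taking $k\le b_3(t)$ yields the result. The main technical difficulty of the whole lemma is precisely this third bound: balancing truncation, Markov at slightly different moment orders, and Hölder is unavoidable because of the exact tail index of the $Y^*_{j,S_k}$.
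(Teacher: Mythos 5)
Your treatment of the first two bounds is essentially the paper's. For the first, the paper factors off $\P\{\xi_k>b_1(t)\}$ directly using the independence of $\xi_k$ from $\{k\le\tau_1\}$ and the partial sum (a consequence of the ``$\tau_1$ does not depend on the future of $(\xi_i)$'' property that is used throughout), dominates the partial sum on $\{k\le\tau_1\}$ by the full sum up to $\tau_1$, and invokes Lemma~\ref{prop:main2}; you do the same, modulo the extra conditioning on $\lambda_k$, which the paper does not need. For the second, the paper writes $\W_k^\downarrow=\sum_{i=1}^{\Z_{k-1}}D_i^{(k)}$ with $\E_\omega D_i^{(k)}=\xi_k-1$, applies Markov at $\delta$, then the conditional Jensen inequality and $\sup_k\E\Z_k^\delta<\infty$ from Lemma~\ref{lem:zmom}; your conditioning on $(\xi_k,\Z_{k-1})$ and the identification of the conditional law as a critical $\mathrm{Geom}(1/2)$ GW started from $\Z_{k-1}$ ancestors is the same computation phrased differently.

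The third bound is where your proposal genuinely departs from the paper, and where the gap lies. The paper does \emph{not} prove a tail bound on $\sum_{j\le\Z_k}Y^*_{j,S_k}$ from scratch; it imports the uniform quenched bound $\P\{\sum_{j\le\Z_k}Y^*_{j,S_k}>x\mid\Z_k,\xi_k\}\le A\Z_k^\alpha x^{-\alpha}$ (Lemma~7.2 of \cite{BurDysIksMarRoi:2018+}, quoted as~\eqref{eq:aux1}), combines it with Markov's inequality for $\P_\omega\{\Z_k>2^{-1}b_2(t)\}$ and conditional Jensen ($\alpha\le 1/2$) to reduce everything to $\E\,{\bf 1}_{\{\xi_k>b_1(t)\}}(\E_\omega\Z_k)^\alpha$, then expands $\E_\omega\Z_k\le(1+\E_\omega\Z_{k-1})\rho_k\xi_k$ (inequality~\eqref{eq:aux2}) to peel off $(\rho_k\xi_k)^\alpha$, sums the geometric factor to get the $k\le b_3(t)$ growth, and closes with H\"older. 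Your single-big-jump decomposition does not reproduce this. Concretely, the ``max'' piece $\P\{\max_{j\le\Z_k}Y^*_{j,S_k}>c\}$ cannot be handled by a bare union bound: $\E\Z_k=\infty$ under $(\rho 1)$, and the bound $\E\min(\Z_k\,\P\{Y^*>c\},1)\le\P\{Y^*>c\}^\alpha\,\E\Z_k^\alpha$ produces $c^{-\alpha^2}$, not $c^{-\alpha}$, so the claimed rate $kb_2(t)^{-\alpha}$ is not reached. Moreover, your decomposition estimates the events $\{\xi_k>b_1(t)\}$ and $\{Z_k^*>b_2(t)\}$ as if they could be bounded separately, but $\Z_k$ (and hence every piece of $Z_k^*$) depends on $(\xi_k,\rho_k)$ through the environment at index $k$; the paper's route via the quenched bound~\eqref{eq:aux1}, the recursion~\eqref{eq:aux2}, and H\"older is exactly what disentangles this dependence, and nothing in your sketch does that job. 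In short: the first two bounds are fine, but the third bound as proposed would not close.
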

\begin{proof}
The first relation is justified as follows:
uniformly in $k\in\N$,
\begin{align*}
& \P\bigg\{\xi_k>b_1(t), k\leq \tau_1,
\sum_{i=1}^{k-1}(\Z_i + \W^{\downarrow}_i) > b_2(t)\bigg\} \\
&  = \P\{\xi_k>b_1(t)\} \P \bigg\{k\leq \tau_1,  \:  \sum_{i=1}^{k-1}(\Z_i + \W^{\downarrow}_i) >b_2(t)\bigg\} \\
& \leq  \P\{\xi>b_1(t)\} \P\bigg\{ \sum_{i=1}^{\tau_1} \big( \Z_i
+ \W_i^\downarrow
\big)>b_2(t)\bigg\}=O\big(\P\{\xi>b_1(t)\}b_2(t)^{-\alpha}\big),\quad
t\to\infty,
\end{align*}
where the second inequality follows from the fact that $\tau_1$
does not depend on the future of $(\xi_i)_{i\in\N}$, and the last
equality is a consequence of Lemma \ref{prop:main2}.

While treating the second relation we use a
representation
$$\W^\downarrow_1=0,\quad \W^{\downarrow}_k = \sum_{i=1}^{\Z_{k-1}}D_i^{(k)},\quad k\geq 2,~\P-\text{a.s.},$$
where $D_i^{(k)}$ is the number of progeny in the generations
$S_{k-1}+1,\ldots, S_k-1$ of the $i$th particle in the generation
$S_{k-1}$. For fixed $k\geq 2$, under $\P_\omega$, $D_1^{(k)}$,
$D_2^{(k)},\ldots$ are iid and independent of $\Z_{k-1}$, and one
can check that $\E_\omega[D_i^{(k)}] = \xi_k-1$. With this at hand
we write, for any $\delta\in (0,\alpha)$ and any $k\in\N$,
\begin{align*}
&\P \big\{\xi_k> b_1(t),  \W^{\downarrow}_k>b_2(t)\big\} \leq b_2(t)^{-\delta} \E\bigg[ {\bf 1}_{\{\xi_k >b_1(t)\}} \bigg(\sum_{i=1}^{\Z_{k-1}} D_i^{(k)} \bigg)^\delta  \bigg]\\
&=b_2(t)^{-\delta}\E\bigg[ {\bf
1}_{\{\xi_k>b_1(t)\}}\E_\omega\bigg[ \bigg(\sum_{i=1}^{\Z_{k-1}}
D_i^{(k)} \bigg)^\delta \bigg| \Z_{k-1}\bigg] \bigg]\\&\le
b_2(t)^{-\delta}\E\bigg[{\bf 1}_{\{\xi_k>b_1(t)\}}\E_\omega\bigg[ \sum_{i=1}^{\Z_{k-1}} D_i^{(k)}  \bigg| \Z_{k-1}\bigg]^\delta\bigg]\\
&\leq b_2(t)^{-\delta} \E\big[ \xi^\delta {\bf
1}_{\{\xi>b_1(t)\}}]\E[\Z_{k-1}^\delta],
\end{align*}
where the first line is obtained with the help of Markov's
inequality, and the penultimate line follows by an application of
the conditional Jensen's inequality. Since $\E\rho^\delta<1$, an
appeal to Lemma \ref{lem:zmom} yields $\sup_{k\geq
1}\E[\Z_k^\delta]<\infty$.

Turning to the analysis of the third relation we
first note that, for fixed $k\in\N$, $Y_{1,S_k}^\ast$,
$Y_{2,S_k}^\ast,\ldots$ are $\P_\omega$-independent of copies of
$Y_1$ which are $\P$-independent of $\xi_k$. Therefore, according
to Lemma 7.2 in \cite{BurDysIksMarRoi:2018+}, there exists a
(nonrandom) constant $A>0$ such that, for $x>0$,
\begin{equation}\label{eq:aux1}
\P\bigg\{\sum_{j=1}^{\Z_k} Y_{j,S_k}^\ast >x \bigg| \Z_k,
\xi_k\bigg\}\leq A \Z_k^\alpha x^{-\alpha}\quad \P-\text{a.s.}
\end{equation}
Also, it can be checked (see the proof of Lemma 5.2 in
\cite{BurDysIksMarRoi:2018+} for details) that, for $k\in\N$,
\begin{equation}\label{eq:aux2}
\E_\omega \Z_k=\rho_k\E_\omega \Z_{k-1}+\rho_k\xi_k\leq
(1+\E_\omega \Z_{k-1})\rho_k\xi_k,\quad \P-\text{a.s.}
\end{equation}
Here, the last inequality follows from $\xi_k\geq 1$ $\P$-a.s.
Write, for $k\in\N$,
\begin{align*}
&\P\bigg\{\xi_k >b_1(t), \: \Z_k +\sum_{j=1}^{\Z_k}Y_{j,S_k}^\ast >b_2(t)\bigg\}\\
& \leq  \P\bigg\{\xi_k > b_1(t), \: \Z_k
>2^{-1}b_2(t)\bigg\}+ \P\bigg\{\xi_k >b_1(t), \: \sum_{j=1}^{\Z_k}Y_{j,S_k}^\ast >2^{-1}b_2(t)\bigg\}  \\
&\leq  \E{\bf 1}_{\{\xi_k>b_1(t)\}} \P_\omega\bigg\{\Z_k
>2^{-1}b_2(t)\bigg\} + \E{\bf 1}_{\{\xi_k>b_1(t)\}}
\P\bigg\{\sum_{j=1}^{\Z_k} Y_{j,S_k}^\ast >2^{-1}b_2(t)\bigg| \Z_k, \xi_k\bigg\} \\
& \leq 2^\alpha b_2(t)^{-\alpha} \E {\bf
1}_{\{\xi_k>b_1(t)\}}\E_\omega \Z_k^\alpha +2^\alpha
Ab_2(t)^{-\alpha}\E {\bf 1}_{\{\xi_k>b_1(t)\}} \Z_k^\alpha\\
& =2^\alpha (1+A)b_2(t)^{-\alpha} \E {\bf
1}_{\{\xi_k>b_1(t)\}}\E_\omega \Z_k^\alpha \leq 2^\alpha
(1+A)b_2(t)^{-\alpha} \E {\bf 1}_{\{\xi_k>b_1(t)\}}(\E_\omega
\Z_k)^\alpha
\end{align*}
having utilized Markov's inequality for the third line, inequality
\eqref{eq:aux1} for the fourth and the conditional Jensen's
inequality (observe that $\alpha\in (0, 1/2]$) for the fifth.
Further, for $k=1,2,\ldots, [b_3(t)]$,
\begin{align*}
&\E {\bf 1}_{\{\xi_k>b_1(t)\}}(\E_\omega \Z_k)^\alpha\\
&\leq \E (1+\E_\omega\Z_{k-1})^\alpha \E {\bf 1}_{\{\xi_k>b_1(t)\}} (\rho_k \xi_k)^\alpha \\
&\leq
\Big(1+\E(\rho\xi)^\alpha\Big(\sum_{j=0}^{k-1}\E\rho^\alpha\Big)\Big)
\E {\bf 1}_{\{\xi_k>b_1(t)\}}(\rho_k\xi_k)^\alpha
\leq (1+\E(\rho\xi)^\alpha) k \E {\bf 1}_{\{\xi_k>b_1(t)\}}(\rho_k \xi_k)^\alpha\\
& \leq (1+\E(\rho\xi)^\alpha) b_3(t) \E {\bf 1}_{\{\xi>b_1(t)\}} (\rho \xi)^\alpha \\
& \leq (1+\E(\rho\xi)^\alpha)\Big(\E (\rho\xi)^{\alpha +
\varepsilon}\Big)^{\alpha/(\alpha+\varepsilon)}b_3(t)\P \{ \xi
>b_1(t)\}^{\varepsilon/(\alpha+\varepsilon)},
\end{align*}
where the second line follows from \eqref{eq:aux2}, and the last
line is obtained with the help of H\"{o}lder's inequality.
Combining pieces together completes the proof of the third
relation.
\end{proof}

Fix $x_1>0$ and define the stopping time
$$T= T(t) = \inf\{i\in\N:\xi_i > (t-t^{3/4})x_1\},$$
where, as usual, $\inf \varnothing = \infty$. Put $$\W^0 =
\sum_{i=1}^{\tau_1} \W^0_i$$ and note that, for $i\in\N$, given
$\xi_i$,
\begin{equation}\label{eq:dist_equality_crit2}
\W_i^0~\od~ \Wc_{\xi_i-1},
\end{equation}
where $\Wc_0=0$, $\Wc_n$ for $n\in\N$ is the total progeny in the
first $n$ generations of $\Zc$ and $\xi_i$ is assumed independent
of $(\Wc_n)_{n\in\N_0}$. As a consequence, the random variables
$\W^0_1$, $\W^0_2,\ldots$ are identically distributed. Also, it is
clear that they are independent.
\begin{lem} \label{lem:k2}
Under the assumptions of Proposition \ref{prop:wp1} there exists a
constant $C$ such that, for $x_1, x_2>0$, as
$t\to\infty$,
\begin{multline*}
\P\big\{S_{\tau_1} > t x_1, \W^0 > t^2 x_2 \big\}=
\P\big\{S_{\tau_1} > t x_1, T\le \tau_1 < C \log t, \W^0_{T}
> (t - t^{3/4})^2 x_2 \\ \mbox{ and } \xi_i\leq t^{2/3}x_1,
\W^0_i\leq t^{5/3}x_2 \mbox{ for all } i\not= T  \big\} +
o(t^{-\beta}).
\end{multline*}
\end{lem}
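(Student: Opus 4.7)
The plan is to prove the stated equality by bounding the symmetric difference $\P(A\triangle B)=\P(A\setminus B)+\P(B\setminus A)$ by $o(t^{-\beta})$, where $A=\{S_{\tau_1}>tx_1,\,\W^0>t^2x_2\}$ is the event on the left-hand side and $B$ denotes the event on the right-hand side (without the additive $o(t^{-\beta})$). This is an implementation of the principle of one big jump: on $A$, a single large $\xi_T$ accounts for $S_{\tau_1}>tx_1$, and since $\W^0_T\od\Wc_{\xi_T-1}$ with $\Wc_n/n^2\dod\vartheta$ from \eqref{distr_conv}, it also accounts for essentially all of $\W^0$. Under the hypotheses of Theorem \ref{thm:main11T} one has $\E\rho^\varepsilon,\,\E\xi^\varepsilon<\infty$ for some $\varepsilon>0$, so Lemma \ref{lem:nu} yields $\E\exp(\gamma\tau_1)<\infty$ for some $\gamma>0$, and hence $\P\{\tau_1\ge C\log t\}=O(t^{-\gamma C})$. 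Choosing $C>\beta/\gamma$ once and for all, I henceforth restrict to $\{\tau_1<C\log t\}$.

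To bound $\P(A\setminus B)$ I enumerate the four ways the additional constraints defining $B$ can fail. (i) If $T>\tau_1$ while $S_{\tau_1}>tx_1$, then no $\xi_i$, $i\le\tau_1$, exceeds $(t-t^{3/4})x_1$, so the second-largest order statistic of $(\xi_1,\dots,\xi_{\tau_1})$ must exceed $t^{3/4}x_1/(C\log t)$; a pairwise union bound gives $O((\log t)^{2+2\beta}\,t^{-3\beta/2}\,\ell(t)^2)$. (ii) The event that some $\xi_i>t^{2/3}x_1$ for $i\ne T$ requires two large $\xi$'s, so pairwise union bound gives $O((\log t)^2\,t^{-5\beta/3}\,\ell(t)^2)$. (iii) For the event that some $\W^0_i>t^{5/3}x_2$ with $i\ne T$, $i\le\tau_1$, I decouple $\{T=k\}$ from $\W^0_j$, $j\ne k$, using independence of $(\xi_k,\Wc^{(k)})$ across $k$, and combine with $\P\{\W^0_1>t^{5/3}x_2\}\asymp t^{-5\beta/6}\ell(t^{5/6})$ from Lemma \ref{lem:lu1}, obtaining $O((\log t)^2\,t^{-11\beta/6}\,\ell(t)^2)$. (iv) If $T\le\tau_1$, $\W^0_T\le(t-t^{3/4})^2x_2$, and $\W^0>t^2x_2$, then $\sum_{i\ne T,\,i\le\tau_1}\W^0_i>2t^{7/4}x_2-t^{3/2}x_2$, so some $\W^0_i$ must exceed $t^{5/3}x_2$ for large $t$, and case (iv) is subsumed by (iii). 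All four estimates are $o(t^{-\beta})$ because $3\beta/2,\,5\beta/3,\,11\beta/6$ all exceed $\beta$.

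For $\P(B\setminus A)$, the constraint $\W^0_i\le t^{5/3}x_2$ for $i\ne T$, $i\le\tau_1$, gives $\W^0=\W^0_T+O(\log t\cdot t^{5/3}x_2)$; since $\W^0_T\le\W^0\le t^2 x_2$ on $B\setminus A$, this forces $\W^0_T$ into the window $\bigl((t-t^{3/4})^2 x_2,\,t^2 x_2\bigr]$, whose length is $\sim 2t^{7/4}x_2$. Using $\W^0_T\od\Wc_{\xi_T-1}$ with $\xi_T$ independent of $\Wc$, the scaling $\Wc_n/n^2\dod\vartheta$, and the smoothness of $F_\vartheta$ (readable from the Laplace transform \eqref{varthe}), the probability that $\Wc_{\xi_T-1}/\xi_T^2$ lands in a $\vartheta$-scale interval of length $O(t^{-1/4})$ is itself $O(t^{-1/4})$; integrating against the tail of $\xi_T$, which contributes a factor $O(t^{-\beta}\ell(t))$, yields $\P(B\setminus A)=O(t^{-\beta-1/4}\ell(t))=o(t^{-\beta})$. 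The main obstacle is this last step: upgrading $\Wc_n/n^2\dod\vartheta$ to a uniform-in-$n$ bound on the probability that $\Wc_n$ lies in a short interval, which I would handle via a Polya-type strengthening of distribution-function convergence combined with the smoothness of $F_\vartheta$.
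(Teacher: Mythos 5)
Your decomposition of $\P(A\setminus B)$ into cases (i)--(iv) is essentially correct and yields $o(t^{-\beta})$ along similar lines to the paper, with one notable difference in case (iii). There the paper first imposes $\xi_i\le t^{2/3}x_1$ for $i\ne T$ (the previous step) and then exploits the distributional identity $\W_i^0\od\Wc_{\xi_i-1}$ together with the uniform moment bound \eqref{eq:w57}: choosing any $s>3\beta$,
$$\P\{\xi_i\le t^{2/3}x_1,\ \W_i^0>t^{5/3}x_2\}\le\P\{\Wc_{[t^{2/3}x_1]}>t^{5/3}x_2\}\le x_2^{-s}\,\frac{\E(\Wc_{[t^{2/3}x_1]})^s}{t^{5s/3}}=O(t^{-s/3}),$$
which is $o(t^{-\beta})$ directly, with no decoupling and no use of $\xi_T$. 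Your route is viable but more delicate: $\P\{\W_1^0>t^{5/3}x_2\}\asymp t^{-5\beta/6}\ell(t^{5/6})$ by Lemma~\ref{lem:lu1} decays \emph{more slowly} than $t^{-\beta}$, so you are forced to pay an extra factor $\P\{\xi>(t-t^{3/4})x_1\}\asymp t^{-\beta}\ell(t)$ coming from $\xi_T$; that factorisation is legitimate because $\{T=k\}\subset\{\xi_k>(t-t^{3/4})x_1\}$ and the latter is independent of $(\xi_j,\W_j^0)_{j\ne k}$, but you should spell that out, and you should not drop the constraint $j\le\tau_1$ silently before the union bound.

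The genuine gap is in $\P(B\setminus A)$, as you recognize. What you would need is a uniform-in-$n$ small-interval estimate of the form $\sup_n\P\{\Wc_n\in(cn^2,(c+\delta)n^2]\}=O(\delta)$; \eqref{distr_conv} gives only weak convergence and \eqref{eq:w57} gives only moment convergence, and a P\'{o}lya-type argument upgrades this to uniform convergence of the distribution functions of $n^{-2}\Wc_n$ to $F_\vartheta$ but not to a uniform Lipschitz (small-ball) bound over $n$; neither a density bound for $n^{-2}\Wc_n$ nor a Berry--Esseen-type rate is in the paper's toolkit, so this step is not merely technical. It is worth noting, however, that the paper's own proof of Lemma~\ref{lem:k2} also establishes only the one-sided bound $\P(A)\le\P(B)+o(t^{-\beta})$: the final sentence shows that on $A$ intersected with the side constraints, $\W^0_T>(t-t^{3/4})^2x_2$ is forced, i.e., $A\cap(\text{constraints})\subset B$; the converse inclusion (up to $o(t^{-\beta})$) is never addressed. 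The reason this suffices for Proposition~\ref{prop:wp1} is that the lower bound there is derived from the event $\{j\le\tau_1<C\log t,\ \xi_j>tx_1,\ \W^0_j>t^2x_2,\ldots\}$, which is already contained in $A$ (since $\W^0\ge\W^0_j>t^2x_2$); so no lower bound for $\P(A)$ is ever inferred from the lemma's right-hand side. You should therefore replace the symmetric-difference strategy by the one-sided inequality $\P(A)\le\P(B)+o(t^{-\beta})$, which is all that is actually used, and which your cases (i)--(iv) deliver.
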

\begin{proof}
The assumptions ensure that $\E\rho^\varepsilon<\infty$ and
$\E\xi^\varepsilon<\infty$ for some $\varepsilon>0$. Hence, $\E
e^{r\tau_1}<\infty$ for some $r>0$ by Lemma \ref{lem:nu}.

We start by proving a similar statement for the first coordinate
alone: as $t\to\infty$,
\begin{equation}\label{eq:w56}
\P\{S_{\tau_1} >tx_1\}= \P\big\{S_{\tau_1} >tx_1, T\le \tau_1 < C
\log t   \mbox{ and } \xi_i\leq t^{2/3}x_1 \mbox{ for all } i\not=
T \big\} + o(t^{-\beta})
\end{equation}
for any $C>2\beta/r$. By Markov's inequality
\begin{equation}\label{eq:w561}
\P\{ \tau_1 \ge C \log t\} \le t^{-Cr}\E e^{r\tau_1}=
o(t^{-2\beta}),\quad t\to\infty.
\end{equation}
Further, since
\begin{multline*}
\P\big\{ S_{\tau_1} > tx_1, \tau_1 < C \log t \ \mbox{ and } \
\xi_i > t^{2/3}x_1, \xi_j > t^{2/3}x_1 \ \mbox{ for some } i< j
\leq \tau_1 \big\} \\ \le 2^{-1}C^2 (\log t)^2 \P\big\{ \xi >
t^{2/3}x_1\big\}^2  =  o(t^{-\beta}),
\end{multline*}
we conclude that, as $t\to\infty$,
\begin{multline*}
\P\{ S_{\tau_1} >tx_1\}=\P\big\{ S_{\tau_1} > tx_1,   \tau_1 < C\log t,\xi_j > (t-t^{3/4})x_1 \text{ for some unique } j\le \tau_1\\
\text{ and }\xi_i\leq t^{2/3}x_1\text{ for all }i\leq \tau_1,i\neq
j\big\} + o(t^{-\beta})
\end{multline*}
because the sum $S_{\tau_1}$ must exceed $tx_1$. Therefore, on the
event $\{S_{\tau_1}
> tx_1\}$ with $t$ large enough we have $T= j\leq \tau_1$ and thereupon
$T\leq \tau_1$ which in turn yields \eqref{eq:w56}.

Analogously we can prove that, as $t\to\infty$,
\begin{multline*}
\P\big\{S_{\tau_1} > t x_1, \W^0 > t^2 x_2 \big\}
\\=\P\big\{S_{\tau_1} > t x_1, \W^0 > t^2 x_2, T \le \tau_1 < C \log t \mbox{ and } \xi_i\leq t^{2/3}x_1  \mbox{ for all } i\not= T  \big\}
+ o(t^{-\beta}).
\end{multline*}
Choosing $s>3\beta$ and appealing to~\eqref
{eq:dist_equality_crit2} and \eqref{eq:w57} yields
\begin{align*}
& \P\big\{\tau_1 < C \log t, \xi_i\leq t^{2/3}x_1, \W^0_i > t^{5/3}x_2 \mbox{ for some } i \le \tau_1 \big\} \\
& \le \sum_{i=1}^{[C \log t]} \P\big\{\xi_i\leq t^{2/3}x_1, \W^0_i > t^{5/3}x_2\big\} \le C \log t \P\{ \Wc_{[t^{2/3}x_1]} > t^{5/3}x_2\big\} \\
&\le C x_2^{-s} \log t \frac{\E(
\Wc_{[t^{2/3}x_1]})^s}{t^{4s/3}} \cdot
\frac{t^{4s/3}}{t^{5s/3}} = o(t^{-\beta}),\quad t\to\infty
\end{align*}
having utilized Markov's inequality for the last line. Since each
$\W^0_i$ for $i\not=T$ does not exceed $t^{5/3}x_2$, the variable
$\W^0_T=\sum_{i=1}^{T-1}\W_i^0+\W^0_T$ can only be larger than
$t^2x_2$ for large $t$ provided that the summand $\W^0_T$ is
larger than $(t-t^{3/4})^2 x_2$. This completes the proof of the
lemma.
\end{proof}
\begin{proof}[Proof of Proposition \ref{prop:wp1}]
Our proof consists of two steps. First we show that, for $x_1,
x_2>0$,
\begin{equation}\label{eq:59}
\P\{S_{\tau_1} > t x_1, \W^0 > t^2 x_2 \}~ \sim~ (\E \tau_1)
\E\big[ \min\big(x_1^{-\beta},
x_2^{-\beta/2}\vartheta^{\beta/2}\big)\big]
\ell(t)t^{-\beta},\quad t\to\infty
\end{equation}
and then that
\begin{equation}\label{eq:60}
\P\{S_{\tau_1} > t x_1, \overline{\W}_{\tau_1} - \W^0 > t^2 x_2
\}~ =~o\big( \ell(t)t^{-\beta}\big),\quad t\to\infty.
\end{equation}
{\sc Proof of \eqref{eq:59}.} Due to Lemma \ref{lem:k2} we are
left with investigating the asymptotic behavior of
\begin{multline*}
P(t) = \P\Big\{S_{\tau_1} > t x_1, T\le \tau_1 < C \log t,
\W^0_{T} > (t - t^{3/4})^2 x_2 \\  \mbox{ and } \xi_i\leq
t^{2/3}x_1, \W^0_i \leq t^{5/3}x_2 \mbox{ for all } i\not= T
\Big\},
\end{multline*}
where $C$ is a constant.

We need a more general version of the observation
made in the proof of Lemma \ref{future}: for each $n\in\N$, the
families $((\xi_k,\W^0_k)_{k\le n}, {\bf 1}_{\{\tau_1 \le n\}})$
and $(\xi_k,\W^0_k)_{k> n}$ are independent, that is, the random
variable $\tau_1$ does not depend on the future of the sequence
$(\xi_i,\W^0_i)_{i\in\N}$. This in combination with distributional
equality \eqref{eq:dist_equality_crit2} and Lemma \ref{lem:wp2}
yields
\begin{align*}
P(t) & \le \sum_{j\ge 1} \P\left\{\xi_j > (t - t^{3/4})x_1,
\W^0_{j} > (t - t^{3/4})^2 x_2,
\tau_1 \ge j \right\}\\
& = \sum_{j\ge 1} \P\left\{\xi_j > (t - t^{3/4})x_1,   \W^0_{j} > (t - t^{3/4})^2 x_2 \right\} \P \left\{ \tau_1 \ge j \right\}\\
& =  \P\left\{\xi_1 > (t - t^{3/4})x_1,   \Wc_{\xi_1-1} > (t - t^{3/4})^2 x_2 \right\}  \sum_{j\ge 1}  \P \left\{ \tau_1 \ge j \right\}\\
&\sim (\E \tau_1) \E\big[
\min\big(x_1^{-\beta}, x_2^{-\beta/2}\vartheta^{\beta/2}\big)\big]
t^{-\beta}\ell(t),\quad t\to\infty.
\end{align*}
To obtain a lower bound for $P(t)$ we shall use the following
inequality
\begin{equation}\label{eq:aux}
\P\{\xi_1\leq t^{2/3}x_1, \W^0_1 \leq t^{5/3}x_2\} \ge 1 -
\P\{\xi_1> t^{2/3}x_1\}-\P\{\W^0_1>t^{5/3}x_2\}\geq 1-t^{-\beta/3}
\end{equation}
which holds by virtue of \eqref{eq:reg_var_xi} and Lemma
\ref{lem:lu1} for $t$ large enough. Recalling
\eqref{eq:dist_equality_crit2} and appealing again to the fact
that $\tau_1$ does not depend on the future of the sequence
$\{(\xi_k,\W^0_k)\}_{k \in \NN}$ we obtain
\begin{align*}
P(t) &\ge \sum_{j\geq 1} \P\big\{ j \le \tau_1 < C \log t,  \xi_j > t x_1,   \W^0_{j} > t^2 x_2\\
&\hspace{4cm} \mbox{ and } \xi_i \leq t^{2/3}x_1, \W^0_i\leq t^{5/3}x_2  \mbox{ for all } i\not= j  \big\}\\
&\ge \sum_{j\geq 1} \P\big\{j\leq \tau_1<C\log t, \xi_i\leq t^{2/3}x_1, \W^0_i\leq t^{5/3}x_2\mbox{ for all } i < j \big\}\\
& \qquad \qquad \P\big\{\xi_j > t x_1, \W^0_{j} > t^2 x_2 \big\} \P\big\{\xi_i\leq t^{2/3}x_1, \W^0_i\leq t^{5/3}x_2
\mbox{ for all } j < i < C\log t \big\}\\
&\ge \P\big\{ \xi{_1} > t x_1,   \Wc_{\xi{_1}{-1}} > t^2 x_2 \big\} (1 -  t^{-\beta/3})^{C \log t}\\
&\qquad\cdot \sum_{j\geq 1} \P\big\{j\leq \tau_1<C\log t,\xi_i\leq
t^{2/3}x_1, \W^0_i\leq t^{5/3}x_2  \mbox{ for all } i < j \big\}.
\end{align*}
In view of Lemma \ref{lem:wp2} it remains to note that
$${\lim\inf}_{t\to\infty} \sum_{j\geq 1} \P\big\{j\leq \tau_1<C\log t,\xi_i\leq t^{2/3}x_1, \W^0_i\leq
t^{5/3}x_2  \mbox{ for all } i < j \big\}\geq \E \tau_1$$ by
Fatou's lemma.

\medskip

\noindent {\sc Proof of \eqref{eq:60}.} In the case (A) of Theorem
\ref{thm:main11T} relation \eqref{eq:60} is just a consequence of
Lemma \ref{lem:Wnu} and Markov's inequality:
\begin{align*}
&\P\big\{ S_{\tau_1} > t x_1, \overline{\W}_{\tau_1} - \W^0 > t^2
x_2 \big\}\\ & \le \P\bigg\{\sum_{i=1}^{\tau_1}(\Z_i +
\W^{\downarrow}_i) > t^2 x_2 \bigg\} \\
& \le x_2^{-(\beta+\gamma)/2} \E \bigg(\sum_{i=1}^{\tau_1}(\Z_i +
\W^{\downarrow}_i) \bigg)^{(\beta + \gamma)/2} \cdot
t^{-(\beta+\gamma)}\\&\leq
x_2^{-(\beta+\gamma)/2}\Big(\E
\bigg(\sum_{i=1}^{\tau_1}\Z_i\bigg)^{(\beta + \gamma)/2}+ \E
\bigg(\sum_{i=1}^{\tau_1}\W^{\downarrow}_i\bigg)^{(\beta+\gamma)/2}\Big)
\cdot t^{-(\beta+\gamma)},
\end{align*}
where $\gamma>0$ is small enough (in particular,
$\beta+\gamma<2$), and the last inequality is justified by
subadditivity of $s\mapsto s^{(\beta+\gamma)/2}$ for $s\geq 0$.

Assume from now on that either the case (B1) or (B2) of Theorem
\ref{thm:main11T} prevails. Arguing as in the proof of Lemma
\ref{lem:k2} one can check that it is sufficient to show that, for
any $x_1, x_2>0$ and a constant $C>0$,
\begin{equation}\label{eq:70}
\P\big\{ \xi_T > t x_1, T \leq \tau_1 \leq C\log t, \:
\overline{\W}_{\tau_1} - \W^0 > t^2 x_2 \big\}~ =~o\big( \ell(t)
t^{-\beta}\big),\quad t\to\infty.
\end{equation}
Observe that decomposition \eqref{eq:wp3} implies that on the
event $ \{ T \leq \tau_1 \}$
\begin{equation*}
\overline{\W}_{\tau_1} - \W^0 = \sum_{i=1}^{T-1}(\Z_i +
\W^{\downarrow}_i) + \W^{\downarrow}_T+
\Z_T+\sum_{j=1}^{\Z_T}
Y_{j,S_T}^\ast+ \sum_{k=S_T+1}^{S_{\tau_1}}Y_k,
\end{equation*}
where, for $j\in\N$, $j\leq \Z_T$, $Y_{j,S_T}^\ast$ denotes the
total progeny of the $j$th particle in the generation $S_T$. Thus,
to ensure \eqref{eq:70} it is sufficient to check that, as
$t\to\infty$,
\begin{align*}
I_1(t) & =  \P\bigg\{\xi_T > t x_1, T \leq \tau_1<C\log t , \: \sum_{i=1}^{T-1}(\Z_i + \W^{\downarrow}_i) > t^2 x_2 \bigg\}= o\big( \ell(t) t^{-\beta}\big),\\
I_2(t) & =  \P\big\{\xi_T > t x_1, T \leq \tau_1<C\log t, \: \W^{\downarrow}_T > t^2 x_2 \big\} = o\big( \ell(t) t^{-\beta}\big),\\
I_3(t) & =  \P\bigg\{\xi_T > t x_1, T \leq \tau_1<C\log t, \: \Z_T +\sum_{j=1}^{\Z_T}Y_{j,S_T}^\ast > t^2 x_2 \bigg\}=o\big( \ell(t) t^{-\beta}\big),\\
I_4(t) & =  \P\bigg\{\xi_T > t x_1, T \leq \tau_1<C\log t, \:
\sum_{k=S_T+1}^{S_{\tau_1}}Y_k > t^2 x_2 \bigg\}=o\big( \ell(t)
t^{-\beta}\big).
\end{align*}
To treat $I_1(t)$, $I_2(t)$ and $I_3(t)$ we use
Lemma \ref{lem:aux} with $b_1(t)=tx_1$, $b_2(t)=t^2x_2$ and
$b_3(t)=C\log t$. Recalling that $\alpha=\beta/2$ we obtain
\begin{align*}
I_1(t) & \leq \sum_{k=1}^{[C\log t]} \P\bigg\{\xi_k>tx_1, \:
\max_{1\leq i \leq k-1}\,\xi_i \leq (t-t^{3/4})x_1, k\leq \tau_1,
\sum_{i=1}^{k-1}(\Z_i + \W^{\downarrow}_i) > t^2 x_2\bigg\}\\
&=O\big(\P\{\xi>tx_1\}t^{-\beta}\log t\big)=o\big( \ell(t)
t^{-\beta}\big),\quad t\to\infty.
\end{align*}
Further, for any $\delta\in (0,\beta/2)$,
\begin{align*}
I_2(t)\leq \sum_{k=2}^{[C\log t]}  \P \big\{ \xi_k >tx_1,
\W^{\downarrow}_k  >t^2x_2 \big\}= O\big(\E\big[ \xi^\delta {\bf
1}_{\{\xi>tx_1\}}] t^{-2\delta}\log t\big),\quad t\to\infty.
\end{align*}
According to Karamata's theorem (Theorem 1.6.5 in
\cite{bingham1989regular}) the function $t\mapsto
\E\big[\xi^\delta {\bf 1}_{\{\xi
>tx_1\}}]\times t^{-2\delta}\log t$ is regularly varying at $\infty$ of index
$-\beta-\delta$, whence $I_2(t)=o(t^{-\beta}\ell(t))$ as
$t\to\infty$. Passing to $I_3(t)$ we infer
\begin{align*}
I_3(t) & \leq \sum_{k=1}^{[C\log t]} \P\bigg\{\xi_k > t x_1, \:
\Z_k +\sum_{j=1}^{\Z_k}Y_{j,S_k}^\ast > t^2 x_2
\bigg\}=O\big(\P\{\xi>tx_1\}^{\varepsilon/(\alpha+\varepsilon)}
t^{-\beta}(\log t)^2\big)\\&=o\big(t^{-\beta}\ell(t)\big),\quad
t\to\infty.
\end{align*}
Finally, the relation for $I_4(t)$ holds true just because $\xi_T$
and $\sum_{k=S_T+1}^{S_{\tau_1}}Y_k$ are independent.
\end{proof}
\begin{proof}[Proof of Proposition \ref{prop:w1}]
Recalling the asymptotic relation obtained in Lemma \ref{lem:lu1}
and arguing as in the proof of Lemma~\ref{future} we conclude that
\begin{equation*}
\P \Bigg\{\W^0=\sum_{k=1}^{\tau_1} \W_k^0 > t
\Bigg\}~ \sim~ (\E \tau_1) \P \{\W_1^0>t\},\quad t\to\infty
\end{equation*}
which implies
\begin{equation*}
\P \left\{\W^0>t^{1/\alpha} \right\}~ \sim~ (\E
\tau_1)(\E\vartheta^{\beta/2})\ell \big(t^{1/(2\alpha)}\big)
t^{-\beta/2\alpha} = o(t^{-1}),\quad t\to\infty.
\end{equation*}
Thus, it is sufficient to prove
\begin{equation*}
\P\big \{S_{\tau_1} > a(t), \overline{\W}_{\tau_1} - \W^0 >
t^{1/\alpha} \big\} = o(t^{-1}),\quad t\to\infty.
\end{equation*}

In view of $\PP \big\{ \max _{1\leq k \leq
\tau_1}\,\xi _k >t\big\} \sim \PP \{ S_{\tau_1} >t\}$ as
$t\to\infty$ (see \eqref{eq:k35}),
$$\P\{S_{\tau_1}>t, \max_{1\leq k\leq \tau_1}\,\xi_k\leq t\}=\P\{S_{\tau_1}>t\}-\P\{\max_{1\leq k\leq \tau_1}\,\xi_k>t\}=o(\P\{S_{\tau_1}>t\}),$$
whence
\begin{equation*}
\P\big \{ S_{\tau_1} > a(t), \; \max _{1\leq k \leq \tau_1}\, \xi
_k \leq a(t), \overline{\W}_{\tau_1} - \W^0 > t^{1/\alpha} \big\}
= o(t^{-1}),\quad t\to\infty.
\end{equation*}
As a consequence, we are left with showing that
\begin{equation*}
\P\big\{\max _{1\leq k \leq \tau_1}\, \xi _k > a(t),
\overline{\W}_{\tau_1} - \W^0 > t^{1/\alpha} \big\} =
o(t^{-1}),\quad t\to\infty.
\end{equation*}
By Lemma~\ref{lem:nu}, $\E\exp(r\tau_1)<\infty$
for some $r>0$. This implies that there exists $C>0$ such that
$\sum_{k > [C\log t]}\P\{\tau_1\geq k\}=o(t^{-1})$ as $t\to\infty$
and thereupon
\begin{multline*}
\P\big\{\max _{1\leq k \leq \tau_1} \xi _k > a(t),
\overline{\W}_{\tau_1} - \W^0 > t^{1/\alpha} \big\}\\
\leq \sum_{k=1}^{[C\log t]}\P\big\{ \xi _k > a(t),\: k\leq \tau_1,
\overline{\W}_{\tau_1} - \W^0 > t^{1/\alpha}\big\}+o(t^{-1}),\quad
t\to\infty.
\end{multline*}
To ensure that the first summand on the right-hand side is
$o(t^{-1})$ it is more than sufficient if we can check that, for
some $\gamma>0$,
\begin{equation*}
\P \{ \xi_k>a(t),\: k\leq \tau_1, \: \ \overline{\W}_{\tau_1} -
\W^0> t^{1/\alpha}\} = o(t^{-1-\gamma}),\quad t\to\infty
\end{equation*}
uniformly in $k=1,\ldots, [C\log t]$. The latter is accomplished
by making use of a decomposition similar to the one used in the
proof of Proposition~\ref{prop:wp1}, namely: on the event $\{k\leq
\tau_1\}$,
\begin{equation*}
\overline{\W}_{\tau_1} - \W^0 = \sum_{i=1}^{k-1}(\Z_i +
\W^{\downarrow}_i) + \W^{\downarrow}_k+
\Z_k+\sum_{j=1}^{\Z_k}Y_{j,S_k}^\ast +
\sum_{j=S_k+1}^{S_{\tau_1}}Y_j.
\end{equation*}
Summarizing, our task boils down to proving that, uniformly in
$k=1,\ldots, [C\log t]$, as $t\to\infty$,
\begin{align*}
J_1(k, t) & =  \P\bigg\{\xi_k > a(t), k \leq \tau_1, \: \sum_{i=1}^{k-1}(\Z_i + \W^{\downarrow}_i) > t^{1/\alpha} \bigg\}= o\big( t^{-1-\gamma}\big),\\
J_2(k, t) & =  \P\big\{ \xi_k > a(t), k \leq \tau_1, \: \W^{\downarrow}_k > t^{1/\alpha} \big\} = o\big( t^{-1-\gamma}\big),\\
J_3(k, t) & =  \P\bigg\{\xi_k > a(t), k \leq \tau_1, \: \Z_k +\sum_{{j=1}}^{\Z_k}Y_{j,S_k}^\ast > t^{1/\alpha} \bigg\}=o\big( t^{-1-\gamma}\big),\\
J_4(k, t) & =  \P\bigg\{\xi_k > a(t), k \leq \tau_1, \:
\sum_{j=S_k+1}^{S_{\tau_1}}{Y_j} > t^{1/\alpha}
\bigg\}=o\big(t^{-1-\gamma}\big).
\end{align*}
To prove the limit relations for $J_1(k, t)$, $J_2(k, t)$ and
$J_3(k, t)$ we apply Lemma \ref{lem:aux} with $b_1(t)=a(t)$,
$b_2(t)=t^{1/\alpha}$ and $b_3(t)=C\log t$. This enables us to
conclude that $J_1(k,t)=O\big(\P\{\xi>a(t)\}t^{-1}\big)=O(t^{-2})$
uniformly in $k\in\N$ as $t\to\infty$. Also, for any $\delta\in
(0,\alpha)$,
\begin{align*}
J_2(k, t)   \leq \P \big\{\xi_k > a(t),  \W^{\downarrow}_k  >
t^{1/\alpha} \big\}= O\big(t^{-\delta/\alpha } \E\big[ \xi^\delta
{\bf 1}_{\{\xi > a(t) \}}]\big),\quad t\to\infty
\end{align*}
uniformly in $k\in\N$. Invoking Karamata's theorem (Theorem 1.6.5
in \cite{bingham1989regular}) we infer that the function $t\mapsto
t^{-\delta/\alpha } \E\big[ \xi^\delta {\bf 1}_{\{\xi > a(t) \}}]$
is regularly varying at $\infty$ of index
$-1-\delta(\alpha^{-1}-\beta^{-1})<-1$, hence
$J_2(k,t)=o(t^{-1-\gamma})$ uniformly in $k\in\N$ as $t\to\infty$.
Further, as $t\to\infty$,
\begin{align*}
J_3(k,t)\leq \P\bigg\{\xi_k > a(t), \: \Z_k
+\sum_{j=1}^{\Z_k}Y_{j,S_k}^\ast > t^{1/\alpha}
\bigg\}=O(t^{-1-\varepsilon/(\alpha+\varepsilon)}\log t)=
o\big(t^{-1-\gamma}\big)
\end{align*}
uniformly in $k=1,2,\ldots, [C\log t]$. The asymptotic estimate
for $J_4(k, t)$ is justified by the independence of $\xi_k$ and
$\sum_{j=S_k+1}^{S_{\tau_1}}Y_j$.
\end{proof}

\section{The proofs}\label{sec:proof}

Recall the notation
\begin{equation*}
\nu(t) = \inf\{ n \in \N \: : \: S_n >t \},\quad t\geq 0.
\end{equation*}
Put
\begin{equation}\label{eq:RenFunDf}
U(t) = \E \nu(t) = \sum_{k\geq 0}\P\{ S_k \leq t \},\quad t\geq 0,
\end{equation}
so that $U$ is the renewal function. It is well-known (see, for
instance, formula (2.1) in \cite{Erickson:1970}) that
\eqref{eq:reg_var_xi} with $\beta\in (0,1]$ entails
\begin{equation}\label{ren1}
\lim_{t\to\infty}\frac{m(t)}{t}U(t)=(\Gamma(2-\beta)\Gamma(1+\beta))^{-1},
\end{equation}
where $\Gamma$ is the Euler gamma function and the function $m$ is
defined in \eqref{eq: mt}.

We start with several technical results. The relevance of the
random variables $Y_n$ defined in Lemma \ref{lem:quenchedY} is
justified by formula \eqref{decomp}.
\begin{lem}\label{lem:quenchedY}
Put
\begin{equation*}
Y_n =
\sum_{j=0}^{S_{\nu(n)-1}}\Big(U_j^{(n)}-U_j^{(S_{\nu(n)-1})}\Big),\quad
n\in\N.
\end{equation*}
Then
\begin{align*}
\E_\omega Y_n  =&  (n-S_{\nu(n)-1})\rho_1\rho_2\cdot\ldots\cdot
\rho_{\nu(n)}\\&+
(n-S_{\nu(n)-1})\rho_{\nu(n)}(\xi_1\rho_2\cdot\ldots\cdot\rho_{\nu(n)-1}+\ldots+\xi_{\nu(n)-1}).
\end{align*}
\end{lem}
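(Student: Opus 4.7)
By linearity, $\E_\omega Y_n=\sum_{j=0}^{m}(E_j-F_j)$, where I set $m:=S_{\nu(n)-1}$, $r:=n-m$, $\mu:=\nu(n)$, $E_j:=\E_\omega U_j^{(n)}$ and $F_j:=\E_\omega U_j^{(m)}$. The plan is to read off a first-order linear recursion for both $E_j$ and $F_j$ from the branching-process construction of Section~\ref{sec:br1}, subtract to get a clean recursion for the difference $D_j:=E_j-F_j$, and then sum by partitioning $\{0,1,\dots,m-1\}$ according to the blocks between consecutive marked sites.

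First I would recall that, by the description of the inhomogeneous branching process associated with $X$ and $T_n$, the offspring distribution at level $j$ is $\mathrm{Geom}(\omega_j)$ with mean $\rho_j^\ast:=(1-\omega_j)/\omega_j$. Together with the immigrant this gives, for any target level $N$ and $j<N$,
\begin{equation*}
\E_\omega U_j^{(N)}=\rho_j^\ast\bigl(\E_\omega U_{j+1}^{(N)}+1\bigr),\qquad \E_\omega U_N^{(N)}=0.
\end{equation*}
Under $\omega$ given by \eqref{eq:sparse} one has $\rho_j^\ast=1$ at every nonmarked site and $\rho_{S_k}^\ast=\rho_{k+1}$. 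Since $m<j<n$ are all nonmarked (by definition of $\nu(n)$), iterating the recursion for $E_j$ from level $n$ downwards yields $E_{m+1}=r-1$ and hence $E_m=r\rho_{\mu}$, while $F_m=0$ by definition. Subtracting the two recursions, the common inhomogeneous term $\rho_j^\ast$ cancels and one obtains the homogeneous recursion
\begin{equation*}
D_j=\rho_j^\ast\,D_{j+1},\qquad 0\le j<m,\qquad D_m=r\rho_{\mu},
\end{equation*}
so that $D_j=r\rho_\mu\prod_{k=j}^{m-1}\rho_k^\ast$ for $0\le j\le m-1$.

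The remaining task is to evaluate $\E_\omega Y_n=D_m+\sum_{j=0}^{m-1}D_j$. I would partition $\{0,\dots,m-1\}=\bigsqcup_{i=1}^{\mu-1}\{S_{i-1},\dots,S_i-1\}$: inside block $i$ the only marked site is $S_{i-1}$ (contributing $\rho_i$), while the other $\xi_i-1$ positions contribute $1$. A direct count then gives $\prod_{k=j}^{m-1}\rho_k^\ast=\rho_i\rho_{i+1}\cdots\rho_{\mu-1}$ when $j=S_{i-1}$, and $\rho_{i+1}\cdots\rho_{\mu-1}$ when $S_{i-1}<j\le S_i-1$. Hence
\begin{equation*}
\E_\omega Y_n=r\rho_\mu+r\rho_\mu\sum_{i=1}^{\mu-1}\bigl[\rho_i\rho_{i+1}\cdots\rho_{\mu-1}+(\xi_i-1)\rho_{i+1}\cdots\rho_{\mu-1}\bigr].
\end{equation*}

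Finally, using the telescoping identity $\rho_{i+1}\cdots\rho_{\mu-1}=\rho_{i+1}\rho_{i+2}\cdots\rho_{\mu-1}$ (which equals the $(i+1)$-st term of the sum $\sum_j\rho_j\cdots\rho_{\mu-1}$), the terms $\rho_i\rho_{i+1}\cdots\rho_{\mu-1}-\rho_{i+1}\cdots\rho_{\mu-1}$ collapse and the constant pieces $r\rho_\mu$ cancel, leaving exactly $r\rho_1\rho_2\cdots\rho_\mu$ from the $\rho_i$-terms and $r\rho_\mu\sum_{i=1}^{\mu-1}\xi_i\rho_{i+1}\cdots\rho_{\mu-1}$ from the $\xi_i$-terms, which is the claimed identity. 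The only genuinely delicate step is the block bookkeeping at step~(v); everything else is routine once the recursion $D_j=\rho_j^\ast D_{j+1}$ is in place.
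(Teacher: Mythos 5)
Your proof is correct and follows essentially the same route as the paper's: both evaluate $\E_\omega U_j^{(k)}$ from the quenched branching recursion (the paper writes the closed form $\E_\omega U_j^{(k)}=\sum_{i=j}^{k-1}\prod_{r=j}^i q_r$ with $q_r=(1-\omega_r)/\omega_r$), subtract the two stopping levels $k=n$ and $k=S_{\nu(n)-1}$, and sum over $j$ by partitioning $\{0,\dots,S_{\nu(n)-1}\}$ into the inter-mark blocks. Your recursive packaging $D_j=q_j D_{j+1}$ with $D_{S_{\nu(n)-1}}=(n-S_{\nu(n)-1})\rho_{\nu(n)}$ is a cosmetic reorganization of the paper's interchange of summation order and block-by-block evaluation.
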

\begin{proof}
Recurrence relation \eqref{eq: gr1} entails, for $k\in\N$ and
$j=0,1,\ldots, k-1$,
$$\E_\omega U^{(k)}_j=\sum_{i=j}^{k-1}\prod_{r=j}^i q_r\quad\text{a.s.},$$
where $q_r=(1-\omega_r)/\omega_r$ for $r\in\N_0$ and thereupon
\begin{eqnarray*}
\E_\omega
\sum_{j=0}^{S_{\nu(n)-1}}\Big(U_j^{(n)}-U_j^{(S_{\nu(n)-1})}\Big)&=&\sum_{j=0}^{S_{\nu(n)-1}}\bigg(\sum_{i=j}^{n-1}\prod_{r=j}^i
q_r- \sum_{i=j}^{S_{\nu(n)-1}-1}\prod_{r=j}^i q_r\bigg)\\
&=&\sum_{j=0}^{S_{\nu(n)-1}}\sum_{i=S_{\nu(n)-1}}^{n-1}\prod_{r=j}^i
q_r\\&=&\sum_{i=S_{\nu(n)-1}}^{n-1}\prod_{r=0}^i
q_r+\sum_{m=1}^{\nu(n)-1}
\sum_{j=S_{m-1}+1}^{S_m}\sum_{i=S_{\nu(n)-1}}^{n-1}\prod_{r=j}^i
q_r\\&=&(n-S_{\nu(n)-1})\rho_1\rho_2\cdot\ldots\cdot
\rho_{\nu(n)}\\&+&
(n-S_{\nu(n)-1})\rho_{\nu(n)}(\xi_1\rho_2\cdot\ldots\cdot\rho_{\nu(n)-1}+\ldots+\xi_{\nu(n)-1}).
\end{eqnarray*}
\end{proof}

In Lemma \ref{lem:rhonu} we show that the contribution of
$\rho_{\nu(n)}$ is negligible in an appropriate sense. Recall
that, for $i=1,2$, the functions $c_i$ were defined in the
conditions ($\xi\rho\, {\rm i}$) in Section \ref{beta<1} when
$\beta\in (0,1)$ and in \eqref{eq: cc} when $\beta=1$.
\begin{lem}\label{lem:rhonu}
\begin{itemize}
\item[(i)]  Assume that ($\xi$) and ($\xi\rho1$) hold {for
$\beta\in (0,1]$} and that $\E\rho^\gamma<\infty$ for some
$\gamma>\beta/2$. Then $\rho_{\nu(n)}/c_1(n)\topr 0$ as
$n\to\infty$.
\item[(ii)] Assume that ($\xi$) and ($\xi\rho2$) hold {for
$\beta\in (0,1]$ and} some $\alpha\leq \beta/2$ and
$\E\rho^\gamma<\infty$ for some $\gamma>\alpha$. Then
$\rho_{\nu(n)}/c_2(n)\topr 0$ as $n\to\infty$.
\end{itemize}
\end{lem}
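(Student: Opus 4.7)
The plan is to condition on where the walk $(S_k)$ first overshoots $n$. Because $\{\nu(n) \geq k\} = \{S_{k-1} \leq n\}$ is $\sigma(\xi_1, \ldots, \xi_{k-1})$-measurable and hence independent of the pair $(\xi_k, \rho_k)$, summing over $k$ yields the renewal-type identity
\begin{equation*}
\P\{\rho_{\nu(n)} > \varepsilon c_i(n)\} = \int_{[0,n]} U(\ud u)\, \P\{\xi > n - u,\ \rho > \varepsilon c_i(n)\},
\end{equation*}
with $U$ as in \eqref{eq:RenFunDf}. Splitting the integration interval at $u = n - h_i(n)$, where $h_1(n) = n^{1/2}$ in Part~(i) and $h_2(n) = n^{\alpha/\beta}$ in Part~(ii) (these being precisely the exponents appearing inside $(\xi\rho i)$), one obtains
\begin{equation*}
\P\{\rho_{\nu(n)} > \varepsilon c_i(n)\} \leq U(n)\, \P\{\xi > h_i(n),\ \rho > \varepsilon c_i(n)\} + \bigl(U(n) - U(n - h_i(n))\bigr)\,\P\{\rho > \varepsilon c_i(n)\}.
\end{equation*}

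For the first summand, the hypothesis $(\xi\rho i)$ says directly that $\P\{\xi > h_i(n),\ \rho > c_i(n)\} = o(\P\{\xi > n\})$; the arbitrary factor $\varepsilon$ is absorbed by the fact that $c_i$ is regularly varying of positive index, so $\varepsilon c_i(n) \geq c_i(\varepsilon' n)$ for an appropriate $\varepsilon' > 0$ when $\varepsilon < 1$, and by monotonicity when $\varepsilon \geq 1$. The renewal asymptotic \eqref{ren1} under $(\xi)$ guarantees that $U(n)\,\P\{\xi > n\} = O(1)$ (convergent to a positive constant for $\beta \in (0,1)$, and tending to $0$ via the slow variation argument $\ell(n)/m(n)\to 0$ for $\beta = 1$), so the first summand vanishes.

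For the second summand, Markov's inequality and the moment assumption $\E \rho^\gamma < \infty$ with $\gamma > \beta/2$ in Part~(i) or $\gamma > \alpha$ in Part~(ii) give $\P\{\rho > \varepsilon c_i(n)\} = O(c_i(n)^{-\gamma})$, which decays like a negative power of $n$ since $c_i$ is regularly varying of positive index. The renewal increment $U(n) - U(n - h_i(n))$ is controlled through a local renewal estimate, namely Erickson's strong renewal theorem when $\beta \in (0,1)$ or the differentiation of $U(t) \sim t/m(t)$ when $\beta = 1$, yielding a bound of order $h_i(n)\,U(n)/n$. A bookkeeping of the regular-variation indices shows that the product $(U(n) - U(n - h_i(n)))\,\P\{\rho > \varepsilon c_i(n)\}$ then vanishes provided $\gamma > \beta/2$ (Part~(i)) or $\gamma > \alpha$ (Part~(ii)).

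The main obstacle is to obtain a sufficiently sharp bound on $U(n) - U(n - h_i(n))$: the trivial estimate $U(n) - U(n - h_i(n)) \leq U(n)$ is insufficient when $\gamma$ is only marginally above $\beta/2$ (respectively $\alpha$), so a genuine local renewal-type estimate is needed. The boundary case $\beta = 1$, in which both $U$ and $m$ grow only slowly, is the most delicate but can be treated directly from the asymptotic $U(t) \sim t/m(t)$ recalled around \eqref{ren1}.
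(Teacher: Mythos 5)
Your proposal follows the paper's structure closely: the renewal decomposition, the split of the integral at $u = n - h_i(n)$ with the same thresholds $h_1(n)=n^{1/2}$, $h_2(n)=n^{\alpha/\beta}$, and the treatment of the first piece via $(\xi\rho i)$ together with the bound $U(n)\P\{\xi>n\}=O(1)$ from \eqref{ren1} are all exactly the paper's argument. The absorption of the factor $\varepsilon$ into the regularly varying $c_i$ is also the right idea, handled a little more carefully in the paper (where $\varepsilon c_1(t)$ is replaced by $c_1(\varepsilon t/2)$ and the whole expression is compared at the shifted argument $\varepsilon t/2$), but your sketch is on the right track.

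The gap is in your treatment of the second summand. You flag the need for a "genuine local renewal-type estimate" on $U(n)-U(n-h_i(n))$ and propose to use Erickson's strong renewal theorem, or else "differentiation of $U(t)\sim t/m(t)$," to get a bound of order $h_i(n)U(n)/n$. Neither route is safe here: the strong renewal theorem in the form you invoke holds without further assumptions only for $\beta>1/2$ (for $\beta\le 1/2$ it can fail and requires additional regularity conditions beyond $(\xi)$), while "differentiating" an asymptotic equivalence is not a valid operation. Since the lemma covers all $\beta\in(0,1]$, your argument does not close. More importantly, no such local estimate is needed. The paper instead uses the elementary subadditivity of the renewal function,
\begin{equation*}
U(n) - U(n-h_i(n)) \;\le\; U(h_i(n)),
\end{equation*}
which holds for every renewal function and requires no extra hypotheses. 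Then $U(h_1(n))=U(n^{1/2})$ is regularly varying of index $\beta/2$, and with $\P\{\rho>\delta c_1(n)\}=O(c_1(n)^{-\gamma})$ (index $-\gamma$) the product has index $\beta/2-\gamma<0$, precisely under the stated hypothesis $\gamma>\beta/2$; Part (ii) is analogous with index $\alpha-(\beta-\alpha)\gamma/\alpha<0$ secured by $\gamma>\alpha$ and $\beta\ge 2\alpha$. In other words, the "trivial bound" you dismiss as insufficient is not the only elementary option: subadditivity gives a sharper, unconditional bound that makes the regular-variation bookkeeping close without any strong renewal input.
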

\begin{proof}
We first check that ($\xi\rho1$) in combination with ($\xi$)
entails
\begin{equation}\label{epsil}
\lim_{t\to\infty}\frac{\P\{\xi>t^{1/2}, \rho>\varepsilon
c_1(t)\}}{\P\{\xi>t\}}=0
\end{equation}
for all $\varepsilon>0$. It suffices to prove this for
$\varepsilon\in (0,2)$. Fix any such an $\varepsilon$ and pick
$t_0$ so large to ensure that $c_1(\varepsilon t/2)/c_1(t)\leq
\varepsilon$ for $t\geq t_0$. This is possible because $c_1$ is
regularly varying at $\infty$ of index $1$. Then, for $t\geq t_0$,
\begin{align*}
\frac{\P\{\xi>t^{1/2},\,\rho>\varepsilon c_1(t)\}}{\P\{\xi>t\}} & \leq \frac{\P\{\xi>t^{1/2},\,\rho>c_1(\varepsilon t/2)\}}{\P\{\xi>t\}}\\
& \leq \frac{\P\{\xi>(\varepsilon
t/2)^{1/2},\,\rho>c_1(\varepsilon t/2)\}}{\P\{\xi>\varepsilon
t/2\}}\frac{\P\{\xi>\varepsilon t/2\}}{\P\{\xi>t\}}.
\end{align*}
The right-hand side tends to zero as $t\to\infty$ in view of
($\xi\rho1$) and the regular variation of $s\mapsto \P\{\xi>s\}$.
This completes the proof of \eqref{epsil}.

As a consequence of~\eqref{ren1} we have
\begin{equation}\label{ren}
\P\{\xi>t\}U(t)=O(1),\quad t\to\infty.
\end{equation}
For any $\delta>0$,
\begin{eqnarray*}
\P\{\rho_{\nu(n)}>\delta
c_1(n)\}&=&\int_{[0,\,n-n^{1/2}]}\P\{\xi>n-y,\,\rho>\delta
c_1(n)\}{\rm d}U(y)\\&+&\int_{(n-n^{1/2},\,
n]}\P\{\xi>n-y,\,\rho>\delta c_1(n)\}{\rm d}U(y)=:I_1(n)+I_2(n).
\end{eqnarray*}
Further, $$I_1(n)\leq \frac{\P\{\xi>n^{1/2},\,\rho>\delta
c_1(n)\}}{\P\{\xi>n\}}\big(\P\{\xi>n\}U(n)\big)~\to~0,\quad
n\to\infty$$ by \eqref{epsil} and \eqref{ren}. As for $I_2(n)$ we
have
\begin{align*}
I_2(n)& \leq \P\{\rho>\delta c_1(n)\}(U(n)-U(n-n^{1/2}))\leq
\P\{\rho>\delta c_1(n)\}U(n^{1/2})\\& \leq (\E \rho^\gamma)(\delta
c_1(n))^{-\gamma}U(n^{1/2})
\end{align*}
by subadditivity of $U$ and Markov's inequality. According to
\eqref{ren1} $U$ is regularly varying at $\infty$ of index
$\beta$. Also, $c_1$ is regularly varying of index $1$. Therefore,
the right-hand side of the last centered formula converges to zero
as $n\to\infty$. This completes the proof of part (i).
\medskip
The proof of part (ii) is analogous. Therefore, we only discuss
principal steps. Similarly to \eqref{epsil} we have
\begin{equation}\label{epsil1}
\lim_{t\to\infty}\frac{\P\{\xi>t^{\alpha/\beta}, \rho>\varepsilon
c_2(t)\}}{\P\{\xi>t\}}=0
\end{equation}
for all $\varepsilon>0$. Let $\delta>0$. Using a decomposition
\begin{eqnarray*}
\P\{\rho_{\nu(n)}>\delta
c_2(n)\}&=&\int_{[0,\,n-n^{\alpha/\beta}]}\P\{\xi>n-y,\,\rho>\delta
c_2(n)\}{\rm d}U(y)\\& & +\int_{(n-n^{\alpha/\beta},\,
n]}\P\{\xi>n-y,\,\rho>\delta c_2(n)\}{\rm d}U(y)=:J_1(n)+J_2(n)
\end{eqnarray*}
we further obtain
$$J_1(n)\leq \frac{\P\{\xi>n^{\alpha/\beta},\,\rho>\delta c_2(n)\}}{\P\{\xi>n\}}\big(\P\{\xi>n\}U(n)\big)~\to~0,\quad n\to\infty$$ by \eqref{epsil1} and
\eqref{ren} and
\begin{align*}
J_2(n) & \leq \P\{\rho>\delta
c_2(n)\}(U(n)-U(n-n^{\alpha/\beta}))\leq \P\{\rho>\delta
c_2(n)\}U(n^{\alpha/\beta})\\& \leq (\E \rho^{\gamma})(\delta
c_2(n))^{-\gamma} U(n^{\alpha/\beta}).
\end{align*}
The right-hand side of the last centered formula converges to zero
as $n\to\infty$ because $U(t^{\alpha/\beta})$ and $c_2(t)^\gamma$
are regularly varying at $\infty$ of indices $\alpha$ and
$(\beta-\alpha)\alpha^{-1}\gamma$, respectively, and
$\alpha<(\beta-\alpha)\alpha^{-1}\gamma$. The latter is secured by
$\gamma>\alpha$ and $\beta\geq 2\alpha$.
\end{proof}

\subsection{The case $\beta \in (0,1)$}
\begin{lem}\label{negligible(01)}
Assume that $\E\log\rho\in [-\infty, 0)$ and that ($\xi$) holds
{for $\beta\in (0,1)$}.
\begin{itemize}
\item[(i)]  If ($\xi\rho1$) holds, and $\E\rho^\gamma<\infty$ for some $\gamma>\beta/2$, then
\begin{equation}\label{inter}
n^{-2}Y_n ~\topr~0,\quad n\to\infty.
\end{equation}
\item[(ii)] If ($\xi\rho2$) holds for some $\alpha\leq \beta/2$, and $\E\rho^\gamma<\infty$ for some $\gamma>\alpha$, then
\begin{equation}\label{inter000}
\P\{\xi>n\}^{1/\alpha}Y_n~\topr~0,\quad n\to\infty.
\end{equation}
\end{itemize}
\end{lem}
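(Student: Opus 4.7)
The plan is to deduce both limits from Markov's inequality applied conditionally on the environment. Since $Y_n\geq 0$, for any $\varepsilon>0$ one has
$$\P\{b_nY_n>\varepsilon\}\leq \E\bigl[\min\bigl(1,\varepsilon^{-1}b_n\E_\omega Y_n\bigr)\bigr]$$
with $b_n=n^{-2}$ in part (i) and $b_n=\P\{\xi>n\}^{1/\alpha}$ in part (ii), so by bounded convergence both claims reduce to $b_n\E_\omega Y_n\topr 0$. Inserting the formula from Lemma~\ref{lem:quenchedY}, using $n-S_{\nu(n)-1}\leq n$ and factoring out $\rho_{\nu(n)}$, I would start from the deterministic estimate
$$\E_\omega Y_n\leq n\,\rho_{\nu(n)}\bigl(\Pi_n+\Sigma_n\bigr), \qquad \Pi_n:=\prod_{j=1}^{\nu(n)-1}\rho_j,\qquad \Sigma_n:=\sum_{k=1}^{\nu(n)-1}\xi_k\prod_{j=k+1}^{\nu(n)-1}\rho_j.$$

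Two of the three factors on the right are routine. Lemma~\ref{lem:rhonu}(i) yields $\rho_{\nu(n)}/n=\rho_{\nu(n)}/c_1(n)\topr 0$, while Lemma~\ref{lem:rhonu}(ii) yields $n\,\P\{\xi>n\}^{1/\alpha}\rho_{\nu(n)}=\rho_{\nu(n)}/c_2(n)\topr 0$; these handle the $\rho_{\nu(n)}$ factor in parts (i) and (ii) respectively. For $\Pi_n$, the assumption $\xi\in\N$ a.s.\ gives $\nu(n)\to\infty$ a.s., whence the SLLN applied to $\log\rho_j$ under $\E\log\rho<0$ yields $\Pi_n\to 0$ a.s. Both parts thus boil down to the single tightness statement $\Sigma_n=O_\P(1)$.

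The main obstacle is precisely this tightness: the random summation limit $\nu(n)$ is coupled with the summands $\xi_k\prod\rho_j$ through the heavy-tailed $\xi_j$'s, so one cannot directly bound $\Sigma_n$ by a stationary perpetuity. My plan is to break the coupling by an exchangeability/reversal argument. Conditionally on $\{\nu(n)=m\}$, the joint density of $(\xi_j,\rho_j)_{j=1}^{m-1}$ is the unconditional i.i.d.\ density reweighted by $\mathbf{1}_{\{S_{m-1}\leq n\}}\P\{\xi>n-S_{m-1}\}$, and since this reweighting is a function of the symmetric statistic $S_{m-1}$ only, the conditional law is exchangeable in $j$. Substituting $l=m-k$ inside $\Sigma_n$ and invoking exchangeability then furnishes, conditionally on $\nu(n)$, the distributional identity
$$\Sigma_n\,\od\,\sum_{l=1}^{\nu(n)-1}\xi_l\prod_{p=1}^{l-1}\rho_p,$$
whose right-hand side is pointwise dominated by the perpetuity $R_\infty:=\sum_{l\geq 1}\xi_l\prod_{p=1}^{l-1}\rho_p$. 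The finiteness $R_\infty<\infty$ a.s.\ follows from $\E\log\rho\in[-\infty,0)$ together with $\E\log^+\xi<\infty$ (immediate from ($\xi$)) by the root test applied to $\log\xi_l+\sum_{p=1}^{l-1}\log\rho_p$. Averaging the conditional bound over $\nu(n)$ produces $\P\{\Sigma_n>K\}\leq\P\{R_\infty>K\}$ uniformly in $n$, which is the required tightness.

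Assembling the estimates, part (i) follows from $b_n\E_\omega Y_n\leq(\rho_{\nu(n)}/n)(\Pi_n+\Sigma_n)=o_\P(1)\cdot O_\P(1)$, and part (ii) from $b_n\E_\omega Y_n\leq(\rho_{\nu(n)}/c_2(n))(\Pi_n+\Sigma_n)=o_\P(1)\cdot O_\P(1)$.
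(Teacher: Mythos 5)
Your proof is correct and follows essentially the same route as the paper's: both reduce to $b_n\E_\omega Y_n\topr 0$ by Markov's inequality and bounded convergence, invoke Lemma~\ref{lem:quenchedY} and Lemma~\ref{lem:rhonu} to control the $\rho_{\nu(n)}$ factor, and prove the tightness of the perpetuity-like sum via the same exchangeability/index-reversal argument (the paper carries it out through the sum $\sum_{k\ge 2}\P\{\cdot,\,\nu(n)=k\}$ with the inner reversal explicit) and domination by the a.s.\ convergent series $\xi_1+\xi_2\rho_1+\xi_3\rho_1\rho_2+\cdots$.
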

\begin{proof}
For part (i) it suffices to prove that
\begin{equation}\label{purp}
n^{-2}\E_\omega Y_n\topr 0,\quad n\to\infty
\end{equation}
because {while} $\P_\omega\{Y_n>\varepsilon
n^2\}\topr 0$ for all $\varepsilon>0$ as $n\to\infty$ then follows
by Markov's inequality, $\P\{Y_n>\varepsilon n^2\}=\E
\P_\omega\{Y_n>\varepsilon n^2\}\to 0$ for all $\varepsilon>0$ as
$n\to\infty$ is justified by the Lebesgue dominated convergence
theorem.

According to Lemma \ref{lem:quenchedY},
$$\E_\omega Y_n  = (n-S_{\nu(n)-1})\rho_1\cdot\ldots\cdot
\rho_{\nu(n)}+
(n-S_{\nu(n)-1})\rho_{\nu(n)}(\xi_1\rho_2\cdot\ldots\cdot\rho_{\nu(n)-1}+\ldots+\xi_{\nu(n)-1}).$$
In view of $\E\log\rho\in [-\infty, 0)$ we have $\lim_{n\to\infty}
\rho_1\cdot\ldots\cdot \rho_n=0$ a.s., whence $\lim_{n\to\infty}
\rho_1\cdot\ldots\cdot
\rho_{\nu(n)-1}=\lim_{n\to\infty}\rho_1\cdot\ldots\cdot
\rho_{\nu(n)}=0$ a.s.\ because $\lim_{n\to\infty}\nu(n)=\infty$
a.s. This in combination with $n-S_{\nu(n)-1}\leq n$ a.s.\ proves
that
$$(n-S_{\nu(n)-1})\rho_1\rho_2\cdot\ldots\cdot
\rho_{\nu(n)}=o(n),\quad n\to\infty~\text{a.s.}$$ By Lemma
\ref{lem:rhonu}, $n^{-1}\rho_{\nu(n)}\topr 0$ as $n\to\infty$.
Therefore, \eqref{purp} follows if we can show that
$\xi_1\rho_2\cdot\ldots\cdot\rho_{\nu(n)-1}+\ldots+\xi_{\nu(n)-1}$
is bounded in probability, that is, for any $a\in (0,1)$ there
exists $b=b(a)>0$ such that
\begin{equation}\label{bound}
\P\{\xi_1\rho_2\cdot\ldots\cdot\rho_{\nu(n)-1}+\ldots+\xi_{\nu(n)-1}>b\}\leq
a.
\end{equation}
Write, for any $x>0$,
\begin{eqnarray*}
&&\P\{\xi_1\rho_2\cdot\ldots\cdot\rho_{\nu(n)-1}+\ldots+\xi_{\nu(n)-1}>x\}\\&=&
\sum_{k\geq
2}\P\{\xi_1\rho_2\cdot\ldots\cdot\rho_{k-1}+\ldots+\xi_{k-1}>x,
S_{k-1}\leq n, S_{k-1}+\xi_k>n\}\\&=&\sum_{k\geq 2}\P\{\xi_1+
\xi_2\rho_1+\ldots+\xi_{k-1} \rho_1\cdot\ldots\cdot\rho_{k-2}>x,
S_{k-1}\leq n, S_{k-1}+\xi_k>n\}\\&=& \P\{\xi_1+
\xi_2\rho_1+\ldots+\xi_{\nu(n)-1}
\rho_1\cdot\ldots\cdot\rho_{\nu(n)-2}>x\}\\&\leq& \P\{\xi_1+
\xi_2\rho_1+\xi_3\rho_1\rho_2+\ldots>x\}.
\end{eqnarray*}
This proves \eqref{bound} (hence, \eqref{purp}) because $\E\log
\xi<\infty$ which is a consequence of ($\xi$) together with
$\E\log \rho\in [-\infty, 0)$ ensures that the series $\xi_1+
\xi_2\rho_1+\xi_3\rho_1\rho_2+\ldots$ converges a.s.

Part (ii) follows from \eqref{bound} and the observation
$$\frac{(n-S_{\nu(n)-1})\rho_{\nu(n)}}{(\P\{\xi>n\}^{-1/\alpha}}\leq \frac{\rho_{\nu(n)}}{n^{-1}\P\{\xi>n\}^{-1/\alpha}}=
\frac{\rho_{\nu(n)}}{c_2(n)}\topr 0, \quad n\to\infty,$$ where the
limit relation is guaranteed by Lemma \ref{lem:rhonu}.
\end{proof}
\begin{lem}\label{levymeasure}
Assume that $\beta\in (0,1)$. The measure $\mu$ defined in
\eqref{mu} satisfies
\begin{equation}\label{levy}
\int_{|{\bf x}|\neq 0}(|{\bf x}|\wedge 1)\mu({\rm d}{\bf
x})<\infty.
\end{equation}
In particular, $\mu$ is the L\'{e}vy measure of the
two-dimensional L\'{e}vy process ${\bf L}$ defined in \eqref{lt}.
\end{lem}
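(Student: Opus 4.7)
The plan is to reduce the two-dimensional integrability condition to the corresponding one-dimensional conditions for the two marginals of $\mu$, and then to observe that both marginals are L\'{e}vy measures of stable subordinators of indices in $(0,1)$, for which the desired integrability is classical.

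First I would identify the marginals of $\mu$ from the defining formula \eqref{mu}. Let $\mu_1$ and $\mu_2$ denote the images of $\mu$ under the two coordinate projections. Letting $x_2\to\infty$ in \eqref{mu}, the term $\mathcal{C}_\mu x_2^{-\beta/2}$ tends to $0$, and since $\min(x_1^{-\beta},x_2^{-\beta/2}\vartheta^{\beta/2})\leq x_2^{-\beta/2}\vartheta^{\beta/2}\to 0$ pointwise with $\vartheta<\infty$ a.s., dominated convergence gives $\mu_1((x_1,\infty])=x_1^{-\beta}$ for $x_1>0$. Analogously, letting $x_1\to\infty$ yields $\mu_2((x_2,\infty])=\mathcal{C}_\mu x_2^{-\beta/2}$ for $x_2>0$. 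These are exactly the tails of the L\'{e}vy measures of drift-free stable subordinators of indices $\beta$ and $\beta/2$ respectively, both lying in $(0,1)$ since $\beta\in(0,1)$.

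Next I would use the elementary inequality
\begin{equation*}
|\mathbf{x}|\wedge 1\leq (x_1\wedge 1)+(x_2\wedge 1),\quad \mathbf{x}=(x_1,x_2)\in\mathbb{K},
\end{equation*}
which is immediate by splitting into the cases $\max(x_1,x_2)\leq 1$ and $\max(x_1,x_2)>1$. This reduces \eqref{levy} to proving $\int (x_i\wedge 1)\,\mu(\mathrm{d}\mathbf{x})<\infty$ for $i=1,2$. By a layer-cake identity and Fubini,
\begin{equation*}
\int_{\mathbb{K}}(x_i\wedge 1)\,\mu(\mathrm{d}\mathbf{x})=\int_0^1 \mu_i((t,\infty])\,\mathrm{d}t,
\end{equation*}
so from the explicit forms of $\mu_1$ and $\mu_2$ the right-hand side equals $\int_0^1 t^{-\beta}\mathrm{d}t=(1-\beta)^{-1}$ for $i=1$ and $\mathcal{C}_\mu\int_0^1 t^{-\beta/2}\mathrm{d}t=2\mathcal{C}_\mu/(2-\beta)$ for $i=2$; both are finite because $\beta$ and $\beta/2$ belong to $(0,1)$. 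Summing yields \eqref{levy}.

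The only subtlety is making sure that the formula \eqref{mu} does define a bona fide measure on $\mathbb{K}$, so that the identifications of $\mu_1$ and $\mu_2$ as projection measures are legitimate. This I would handle by exhibiting $\mu$ concretely: writing the right-hand side of \eqref{mu} as $\mu_1((x_1,\infty])+\mu_2((x_2,\infty])-\E\min(x_1^{-\beta},x_2^{-\beta/2}\vartheta^{\beta/2})$ and recognising the subtracted term as the tail $\P\{U>x_1,\vartheta U^2>x_2\}$-type quantity allows one to decompose $\mu$ as the sum of (i) the pushforward under $u\mapsto(u,\vartheta u^2)$ of $u^{-\beta-1}\mathrm{d}u$ (averaged over an independent copy of $\vartheta$), which accounts for the $\E\vartheta^{\beta/2}x_2^{-\beta/2}$ portion of $\mu_2$ as well as all of $\mu_1$, and (ii) an independent contribution supported on $\{0\}\times(0,\infty]$ giving the remaining mass $(\mathcal{C}_\mu-\E\vartheta^{\beta/2})x_2^{-\beta/2}$. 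This representation shows consistency and at the same time makes the marginal computation transparent. The main (very mild) obstacle is verifying this consistency and the two limiting identifications rigorously; once that is settled, the integrability of $(|\mathbf{x}|\wedge 1)$ against $\mu$ is immediate from $\beta\in(0,1)$.
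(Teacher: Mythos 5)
Your proof is correct, and it takes a route that differs from the paper's in organization though not in underlying content. The paper argues by a dyadic decomposition of $(0,1]^2$ into annular sets $A_n\setminus A_{n-1}$ with $A_n=\{(u,v):u>2^{-n}\text{ or }v>2^{-n}\}$, uses the tail bound $\mu(A_n)\le 2^{n\beta}+\mathcal{C}_\mu 2^{n\beta/2}$ together with $|\mathbf{x}|\le\sqrt 2(x_1\vee x_2)$, and sums a geometric series that converges precisely because $\beta\in(0,1)$. You instead reduce to the two marginals via the pointwise inequality $|\mathbf{x}|\wedge 1\le(x_1\wedge 1)+(x_2\wedge 1)$, read off $\mu_1((x_1,\infty])=x_1^{-\beta}$ and $\mu_2((x_2,\infty])=\mathcal{C}_\mu x_2^{-\beta/2}$ by letting the other coordinate tend to $\infty$ in \eqref{mu}, and then apply the layer-cake identity to get $\int_0^1 t^{-\beta}\,\mathrm{d}t$ and $\mathcal{C}_\mu\int_0^1 t^{-\beta/2}\,\mathrm{d}t$, both finite since $\beta,\beta/2\in(0,1)$. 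The two proofs rest on the same arithmetic (the annulus bound in the paper is exactly the marginal tail evaluated at $x_1=x_2=2^{-n}$), but your reduction is cleaner, avoids the geometric-series bookkeeping, and makes explicit that finiteness is driven by the marginal stability indices being below $1$. Your remark about verifying that \eqref{mu} actually defines a measure is a reasonable precaution; the paper does not address this inside the lemma (it is implicit from the vague-convergence argument in the proof of Theorem~\ref{thm:main11T}), and the decomposition of $\mu$ you sketch via a pushforward of $u^{-\beta-1}\,\mathrm{d}u$ under $u\mapsto(u,\vartheta u^2)$ plus a residual contribution on $\{0\}\times(0,\infty]$ is a correct way to exhibit it.
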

\begin{proof}
One part of \eqref{levy} is trivial: $$\int_{|{\bf x}|>1} \mu({\rm
d}{\bf x})= \mu\{(u,v)\in \mathbb{K}: u^2+v^2>1\}\leq
\mu\{(u,v)\in \mathbb{K}: u>2^{-1/2}~\text{or}~ v>
2^{-1/2}\}<\infty.$$ To prove the other part of \eqref{levy}, set,
for $n\in\N_0$, $A_n:=\{(u,v)\in\mathbb{K}:
u>2^{-n}~\text{or}~v>2^{-n}\}$. Now observe that
$(0,1]^2=\bigcup_{n\geq 1}(A_n\backslash A_{n-1})$ and that
$\mu(A_n)\leq \mathcal{C}_\mu 2^{n\beta/2}+2^{n\beta}$ for
$n\in\N_0$. Using these in combination with the inequality
$\sqrt{x_1^2+x_2^2}\leq \sqrt{2} (x_1\vee x_2)$ which holds for
nonnegative $x_1$ and $x_2$ we obtain
\begin{eqnarray*}
2^{-1/2}\int_{0<|{\bf x}|\leq 1}|{\bf x}|\mu({\rm d}{\bf x})&\leq&
2^{-1/2}\int_{(0,\,1]^2}|{\bf x}|\mu({\rm d}{\bf x})\leq
\sum_{n\geq 1}\int_{A_n\backslash A_{n-1}}(x_1\vee x_2)\mu({\rm
d}{\bf x})\\&\leq& \sum_{n\geq 1}2^{-(n-1)}\mu(A_n)\leq
\sum_{n\geq 1}2^{-(n-1)}(\mathcal{C}_\mu
2^{n\beta/2}+2^{n\beta})<\infty.
\end{eqnarray*}
To justify the last inequality we recall that $\beta\in (0,1)$.
\end{proof}

We are ready to prove the main results.
\begin{proof}[Proof of Theorem~\ref{thm:main11T}]
The transition from \eqref{Tn} to \eqref{Xn} is straightforward.
Hence, we only prove \eqref{Tn}. While either of the conditions
imposed on the distribution of $\rho$ ensures that $\E\log\rho\in
[-\infty, 0)$, condition ($\xi$) guarantees $\E\log\xi<\infty$.
This means that~\eqref{eq:right_transience} holds. Starting
with~\eqref{basic} we obtain a decomposition: for $n\in\N$,
\begin{eqnarray}
T_n&=&
S_{\nu(n)-1}+2\sum_{i=0}^{S_{\nu(n)-1}}U_i^{(S_{\nu(n)-1})}+(n-S_{\nu(n)-1})+2\sum_{i=S_{\nu(n)-1}+1}^n
U_i^{(n)}\notag\\&+&2\sum_{i=0}^{S_{\nu(n)-1}}
\big(U_i^{(n)}-U_i^{(S_{\nu(n)-1})}\big)+2\sum_{i<0}U_i^{(n)}.\label{decomp}
\end{eqnarray}
Since the random walk $X$ is transient to the right
(recall~\eqref{eq:right_transience}) the last summand is bounded
in probability as $n\to\infty$.

If condition $(\rho1)$ holds with $\alpha=\beta/2$, then part (i)
of Lemma~\ref{negligible(01)} applies with $\gamma>\beta/2$ as
defined in $(\rho1)$. If condition $(\rho2)$ holds with
$\beta/2\in \mathcal{I}$, then part (i) of
Lemma~\ref{negligible(01)} applies with any $\gamma>\beta/2$ such
that $\gamma\in \mathcal{I}$. In any event, we conclude that
relation \eqref{inter} holds. Thus, \eqref{Tn} is a consequence of
\begin{equation}\label{weak10}
n^{-2}\Big(S_{\nu(n)-1}+2\sum_{i=0}^{S_{\nu(n)-1}}U_i^{(S_{\nu(n)-1})}+(n-S_{\nu(n)-1})+2\sum_{i=S_{\nu(n)-1}+1}^n
U_i^{(n)}\Big)~\dod~2\chi, \quad n\to\infty.
\end{equation}
In view of $S_{\nu(n)-1}\leq n$ we have
\begin{equation}\label{under}
n^{-2} S_{\nu(n)-1}\topr 0,\quad n\to\infty.
\end{equation}
Recall the definition of $T^\prime_n$ given in \eqref{eq:tprime}.
We claim that, for $n\in\N$,
\begin{equation}\label{equ}
\begin{split}
\varrho_n&:=2\sum_{i=0}^{S_{\nu(n)-1}}{U_i^{(S_{\nu(n)-1})}}+(n-S_{\nu(n)-1})+2\sum_{i=S_{\nu(n)-1}+1}^n
U_i^{(n)}\\ &\od
2\sum_{k=1}^{S_{\nu(n)-1}}Z_k+T^\prime_{n-S_{\nu(n)-1}}:=\varpi_n
\end{split}
\end{equation}
which shows it is enough to prove
\begin{equation}\label{weak9}
n^{-2}\Big(2\sum_{k=1}^{S_{\nu(n)-1}}Z_k+T^\prime_{n-S_{\nu(n)-1}}\Big)~\dod~2\chi,\quad
n\to\infty.
\end{equation}
To check \eqref{equ}, we write, for $n\in\N$ and $x\geq 0$,
\begin{eqnarray*}
\P\{\varrho_n\leq x\} &=&\sum_{k\geq
1}\E\bigg[\P\bigg\{2\sum_{i=0}^{S_{k-1}}{U_i^{(S_{k-1})}}+(n-S_{k-1})+2\sum_{i=S_{k-1}+1}^n
U_i^{(n)}\leq x,\, \\ && S_{k-1}\leq n, S_k>n\big|(\xi_j,
\rho_j)_{1\leq j\leq k-1}\bigg\}\bigg]\\&=& \sum_{k\geq
1}\E\bigg[\P\bigg\{2\sum_{i=1}^{S_{k-1}}Z_i+T^\prime_{n-S_{k-1}}\leq
x,\, S_{k-1}\leq n, S_k>n|(\xi_j, \rho_j)_{1\leq j\leq
k-1}\bigg\}\bigg]\\ &=&\P\{\varpi_n\leq x\},
\end{eqnarray*}
where for the second equality we have used formula
\eqref{eq:cond}, the conditional independence of
$\sum_{i=0}^{S_{k-1}}U_i^{(S_{k-1})}$ and $\sum_{i=S_{k-1}+1}^n
U_i^{(n)}$, given $(\xi_j, \rho_j)_{1\leq j\leq k-1}$, and the
fact that, given $(\xi_j, \rho_j)_{1\leq j\leq k-1}$, the random
variable $n-S_{k-1}+2\sum_{i=S_{k-1}+1}^n U_i^{(n)}$ has the same
distribution as $T^{\prime}_{n-S_{k-1}}$, see \eqref{eq:dec2}.
Passing to the proof of~\eqref{weak9} we note that an appeal
to~\eqref{eq:main_two_sided_estimate} yields
\begin{equation}\label{eq:upper_and_lower_bounds}
\sum_{k=1}^{\tau^{\ast}_{\nu(n)-1}}\overline{\W}_{\tau_k}\leq
\sum_{k=1}^{S_{\nu(n)-1}} Z_k\leq
\sum_{k=1}^{\tau^{\ast}_{\nu(n)-1}+1} \overline{ \W}_{\tau_k}\quad
\P-\text{a.s.}
\end{equation}
Formula \eqref{weak9} holds provided that
\begin{equation}\label{weak11}
n^{-2}\Big(2\sum_{k=1}^{\tau^{\ast}_{\nu(n)-1}}\overline{\W}_{\tau_k}+T^\prime_{n-S_{\nu(n)-1}}\Big)~\dod~2\chi,\quad
n\to\infty
\end{equation}
and
\begin{equation*}
n^{-2}\Big(2\sum_{k=1}^{\tau^{\ast}_{\nu(n)-1}+1}\overline{\W}_{\tau_k}+T^\prime_{n-S_{\nu(n)-1}}\Big)~\dod~2\chi,\quad
n\to\infty
\end{equation*}
We shall only check \eqref{weak11}. The proof of the other limit
relation is analogous.

Recall that $a$ is a positive function satisfying
$\lim_{t\to\infty}t\P\{\xi_1>a(t)\}=1$. By Lemma \ref{future},
$\E\tau_1<\infty$ and $$\lim_{t\to\infty}
t\P\{S_{\tau_1}>a(t)x_1\}=(\E\tau_1)x_1^{-\beta},\quad x_1>0.$$
Further, parts (C2), (C3) and (C4) of Lemma \ref{prop:tail_main}
ensure
$$\lim_{t\to\infty}t\P\{\overline{\W}_{\tau_1}>a(t)^2x_2\}=(\E\tau_1)\mathcal{C}_\mu
x_2^{-\beta/2},\quad x_2>0.$$ These limit relations in combination
with Proposition \ref{prop:wp1} demonstrate that
$$\lim_{t\to\infty}t\P\{S_{\tau_1}>a(t)x_1
~\text{or}~\overline{\W}_{\tau_1}>a(t)^2x_2
\}=(\E\tau_1)\mu\{(u,v)\in\mathbb{K}: u>x_1~\text{or}~v>x_2\}$$
for all $x_1, x_2>0$, where $\mu$ is a measure defined in
\eqref{mu}. By Lemma 6.1 in \cite{Resnick:2007}, the latter
implies that
$$n\P\Big\{\Big(\frac{S_{\tau_1}}{a(n)}, \frac{\overline{\W}_{\tau_1}}{a(n)^2} \Big)\in \cdot\Big\}~\tov~ (\E\tau_1)\mu(\cdot),\quad
n\to\infty,$$ where $\tov$ denotes vague convergence in the set of
locally finite (Radon) measures on $\mathbb{K}$. By Theorem 4 in
\cite{Resnick+Greenwood:1979},
\begin{equation}\label{weak}
\Big(\frac{\sum_{k=1}^{[n\cdot]}(S_{\tau_k}-S_{\tau_{k-1}})}{a(n)},\frac{\sum_{k=1}^{[n\cdot]}\overline{\W}_{\tau_k}}{a(n)^2}
\Big)~\Rightarrow~ {\bf L}(h(\cdot)),\quad n\to\infty
\end{equation}
in the $J_1$-topology on $D^2$, where $h(t)=(\E\tau_1)t$ for
$t\geq 0$.

In view of $\E\tau_1<\infty$, $(\tau_n^\ast)_{n\in\N}$ is the
renewal process which corresponds to the finite mean standard
random walk $(\tau_k)_{k\in\N_0}$. According to the weak law of
large numbers for renewal processes $n^{-1}\tau_n^\ast\topr
(\E\tau_1)^{-1}$ as $n\to\infty$. It is well-known that this can
be strengthened to
\begin{equation}\label{weak3}
\sup_{t\in [0,T]}|n^{-1}\tau_{[nt]}^\ast-(\E\tau_1)^{-1}t|~\topr~
0,\quad n\to\infty
\end{equation}
for all $T>0$ or equivalently
\begin{equation}\label{weak2}
n^{-1}\tau^\ast_{[n\cdot]}~{\overset{{\rm
J_1}}{\Rightarrow}}~ g(\cdot),\quad n\to\infty,
\end{equation}
where $g(t)=(\E\tau_1)^{-1}t$ for $t\geq 0$. Since the limit is
deterministic, \eqref{weak} and \eqref{weak2} can be combined into
the joint convergence
\begin{equation}\label{inter2}
\Big(\Big(\frac{\sum_{k=1}^{[n\cdot]}(S_{\tau_k}-S_{\tau_{k-1}})}{a(n)},\frac{\sum_{k=1}^{[n\cdot]}\overline{\W}_{\tau_k}}{a(n)^2}\Big),
\frac{\tau^\ast_{[n\cdot]}}{n} \Big)~\Rightarrow~ \big({\bf
L}(h(\cdot)), g(\cdot)\big),\quad n\to\infty
\end{equation}
in the product $J_1$-topology on $D^2\times D$. Furthermore, since
the convergence in \eqref{weak3} is uniform, then passing to
versions of
$\Big(\frac{\sum_{k=1}^{[n\cdot]}(S_{\tau_k}-S_{\tau_{k-1}})}{a(n)},\frac{\sum_{k=1}^{[n\cdot]}\overline{\W}_{\tau_k}}{a(n)^2}\Big)$
and $\frac{\tau^\ast_{[n\cdot]}}{n}$ which converge $\P$-a.s.\ we
infer that we can use the same homeomorphisms $\lambda_n(t)$ which
appear in the definition of the $J_1$-convergence for both terms.
This shows that \eqref{inter2} holds in the $J_1$-topology on
$D^3$. An application of Lemma \ref{whi} together with the
continuous mapping theorem yields
\begin{equation}\label{weak4}
\Big(\frac{\sum_{k=1}^{\tau^\ast_{[n\cdot]}}(S_{\tau_k}-S_{\tau_{k-1}})}{a(n)},\frac{\sum_{k=1}^{\tau^\ast_{[n\cdot]}}\overline{\W}_{\tau_k}}{a(n)^2}\Big)~\Rightarrow~
{\bf L}(\cdot),\quad n\to\infty
\end{equation}
and
\begin{equation}\label{weak415}
\Big(\frac{\sum_{k=1}^{\tau^\ast_{[n\cdot]}+1}(S_{\tau_k}-S_{\tau_{k-1}})}{a(n)},\frac{\sum_{k=1}^{\tau^\ast_{[n\cdot]}}\overline{\W}_{\tau_k}}{a(n)^2}\Big)~\Rightarrow~
{\bf L}(\cdot),\quad n\to\infty
\end{equation}
in the $J_1$-topology on $D^2$. Since the random walk
$(S_n)_{n\in\N_0}$ is $\P$-a.s.\ nondecreasing and
$\tau_{\tau^\ast_n}\leq n\leq \tau_{\tau^\ast_n+1}$ $\P$-a.s.,
relations \eqref{weak4} and \eqref{weak415} entail
\begin{equation}\label{weak100}
\Big(\frac{\sum_{k=1}^{[n\cdot]}\xi_k}{a(n)},\frac{\sum_{k=1}^{\tau^\ast_{[n\cdot]}}\overline{\W}_{\tau_k}}{a(n)^2}\Big)~\Rightarrow~
{\bf L}(\cdot),\quad n\to\infty
\end{equation}
in the $J_1$-topology on $D^2$.

An argument leading to Theorem 3.6 in\footnote{The independence
assumption imposed in the cited result is only made to ensure a
limit relation like \eqref{weak100}. The passage from
\eqref{weak100} to \eqref{weak5} is justified by the continuous
mapping theorem and as such does not require the aforementioned
independence.} \cite{Straka+Henry:2011} enables us to conclude
that the last limit relation entails
\begin{equation}\label{weak5}
\Big(\frac{\sum_{k=1}^{\nu([n\cdot])-1} \xi_k}{n},
\frac{\sum_{k=1}^{\tau^\ast_{\nu([n\cdot])-1}}\overline{\W}_{\tau_k}}{n^2}\Big)~\Rightarrow~\big(((L_1\circ
L_1^\leftarrow)(\cdot)-)^+, (((L_2\circ L_1^\leftarrow)
(\cdot)-))^+\big),\quad n\to\infty
\end{equation}
in the $J_1$-topology on $D^2$, where we write $(X(t)^+)$ for
$(X(t+))$.

By Lemma 4.2 (iv) in \cite{Straka+Henry:2011} the limit process in
\eqref{weak5} admits no fixed discontinuities. In view of this we
obtain
\begin{equation*}
\Big(\frac{\sum_{k=1}^{\nu(n)-1}\xi_k}{n},
\frac{\sum_{k=1}^{\tau^\ast_{\nu(n)-1}}\overline{\W}_{\tau_k}}{n^2}\Big)~\dod~(L_1(L_1^\leftarrow(1)-),
L_2(L_1^\leftarrow(1)-)),\quad n\to\infty
\end{equation*}
as a consequence of \eqref{weak5}. An application of \eqref{weak8}
yields
\begin{equation}\label{weak7}
\Big(\frac{S_{\nu(n)-1}}{n},
\frac{\sum_{k=1}^{\tau^\ast_{\nu(n)-1}}\overline{\W}_{\tau_k}}{n^2},
\frac{T_n^\prime}{n^2}\Big)~\dod~\big(L_1(L_1^\leftarrow(1)-),
L_2(L_1^\leftarrow(1)-), M(1)\big),\quad n\to\infty
\end{equation}
having utilized the fact that $T_n^\prime$ is independent of the
other components on the left-hand side. In view of
$n-S_{\nu(n)-1}\topr \infty$ as $n\to\infty$ this implies that, as
$n\to\infty$,
\begin{equation}\label{weak234}
\Big(\frac{n-S_{\nu(n)-1}}{n},
\frac{\sum_{k=1}^{\tau^\ast_{\nu(n)-1}}\overline{\W}_{\tau_k}}{n^2},
\frac{T_{n-S_{\nu(n)-1}}^\prime}{(n-S_{\nu(n)-1})^2}\Big)~\dod~\big(1-L_1(L_1^\leftarrow(1)-),
L_2(L_1^\leftarrow(1)-), M(1)\big)
\end{equation}
and thereupon
$$\frac{2\sum_{k=1}^{\tau^\ast_{\nu(n)-1}}\overline{\W}_{\tau_k}+
T^\prime_{n-S_{\nu(n)-1}}}{n^2}~\dod~
2L_2(L_1^\leftarrow(1)-)+M(1)(1-L_1(L_1^\leftarrow(1)-))^2\od
2\chi,\;\; n\to\infty.$$ Thus, relation \eqref{weak11} holds true.
\end{proof}

\begin{proof}[Proof of Theorem~\ref{thm:main2T}]
Relation \eqref{Xn2} is an immediate consequence of \eqref{Tn2}.
Therefore, we only focus on \eqref{Tn2}. Its proof proceeds along
the lines of the proof of Theorem \ref{thm:main11T} but is much
simpler. In view of this, we only give a sketch.

We shall use decomposition \eqref{decomp}. We already know from
the proof of Theorem \ref{thm:main11T} that the last summand in
\eqref{decomp} is bounded in probability. Further, note that under
the assumptions of Theorem \ref{thm:main2T} all the conditions of
part (ii) Lemma \ref{negligible(01)} are met. Thus,
\begin{equation}\label{inter1}
\P\{\xi>n\}^{1/\alpha}\sum_{j=0}^{S_{\nu(n)-1}}\Big(U_j^{(n)}-U_j^{(S_{\nu(n)-1})}\Big)~\topr~0,\quad
n\to\infty.
\end{equation}
Also,
$$\frac{S_{\nu(n)-1}}{\P\{\xi>n\}^{-1/\alpha}}~\topr~0,\quad n\to\infty$$
because $S_{\nu(n)-1}\leq n$ and the denominator varies regularly
of index $\beta/\alpha\geq 2$. In view of these limit relations,
\eqref{Tn2} is a consequence of
\begin{equation*}
\P\{\xi>n\}^{1/\alpha}\Big(2\sum_{i=0}^{S_{\nu(n)-1}}U_i^{(S_{\nu(n)-1})}+(n-S_{\nu(n)-1})+2\sum_{i=S_{\nu(n)-1}+1}^n
U_i^{(n)}\Big)~\dod~2\widehat{L}_2(\widehat{L}_1^\leftarrow(1))
\end{equation*}
as $n\to\infty$, which in its turn is implied by
$$\P\{\xi>n\}^{1/\alpha}\Big(2\sum_{k=1}^{\tau^\ast_{\nu(n)-1}}\overline{\W}_{\tau_k}+ T^\prime_{n-S_{\nu(n)-1}}\Big)~\dod~2\widehat{L}_2(\widehat{L}_1^\leftarrow(1)-)\od
2\widehat{L}_2(\widehat{L}_1^\leftarrow(1)),\quad n\to\infty$$ by
the same reasoning as given in the proof of
Theorem~\ref{thm:main11T}. Condition $(\rho1)$ ensures that
$\E\log\rho\in [-\infty, 0)$. By Lemma \ref{future},
$\E\tau_1<\infty$ and $$\lim_{t\to\infty}
t\P\{S_{\tau_1}>a(t)x_1\}=(\E\tau_1)x_1^{-\beta},\quad x_1>0.$$
According to part (C1) of Lemma
\ref{prop:tail_main},$$\lim_{t\to\infty}t\P\{\overline{\W}_{\tau_1}>t^{1/\alpha}x_2\}=(\E\tau_1)\mathcal{C}_Z(\alpha)
x_2^{-\alpha},\quad x_2>0.$$ Observe that the cited result applies
in the case $\alpha\in (0, \beta/2)$ in view of
$\E\xi^{2\alpha}<\infty$ which is secured by $(\xi)$. These limit
relations in combination with Proposition \ref{prop:w1}
demonstrate that $$\lim_{t\to\infty}t\P\{S_{\tau_1}>a(t)x_1
~\text{or}~\overline{\W}_{\tau_1}>t^{1/\alpha} x_2
\}=(\E\tau_1)(x_1^{-\beta}+{\mathcal{C}_Z(\alpha)}
x_2^{-\alpha})$$ for all $x_1, x_2>0$. Arguing as in the proof of
Theorem \ref{thm:main11T} we arrive at counterparts of
\eqref{weak}, \eqref{weak100} and \eqref{weak234}, respectively,
\begin{equation*}
\Big(\frac{\sum_{k=1}^{[n\cdot]}(S_{\tau_k}-S_{\tau_{k-1}})}{a(n)},\frac{\sum_{k=1}^{[n\cdot]}\overline{\W}_{\tau_k}}{n^{1/\alpha}}
\Big)~\Rightarrow~ (\widehat{L}_1(h(\cdot),
\widehat{L}_2(h(\cdot))),\quad n\to\infty
\end{equation*}
in the $J_1$-topology on $D^2$, where, as before,
$h(t)=(\E\tau_1)t$ for $t\geq 0$;
\begin{equation}\label{weak100000}
\Big(\frac{\sum_{k=1}^{[n\cdot]}\xi_k}{a(n)},\frac{\sum_{k=1}^{\tau^\ast_{[n\cdot]}}\overline{\W}_{\tau_k}}{n^{1/\alpha}}\Big)~\Rightarrow~
(\widehat{L}_1(\cdot), \widehat{L}_2(\cdot)),\quad n\to\infty
\end{equation}
in the $J_1$-topology on $D^2$; as $n\to\infty$,
$$\Big(\frac{n-S_{\nu(n)-1}}{n}, \frac{\sum_{k=1}^{\tau^\ast_{\nu(n)-1}}\overline{\W}_{\tau_k}}{\P\{\xi>n\}^{-1/\alpha}},
\frac{T_{n-S_{\nu(n)-1}}^\prime}{(n-S_{\nu(n)-1})^2}\Big)\dod\big(1-\widehat{L}_1(\widehat{L}_1^\leftarrow(1)-),
\widehat{L}_2(\widehat{L}_1^\leftarrow(1)-), M(1)\big).$$ Since
$\P\{\xi>n\}^{-1/\alpha}\sim n^{\beta/\alpha}\ell(n)^{-1/\alpha}$
we infer $\lim_{n\to\infty}n^{-2}\P\{\xi>n\}^{-1/\alpha}=\infty$
and thereupon
$$\P\{\xi>n\}^{1/\alpha}\Big(2\sum_{k=1}^{\tau^\ast_{\nu(n)-1}}\overline{\W}_{\tau_k}+
T^\prime_{n-S_{\nu(n)-1}}\Big)~\dod~
2\widehat{L}_2(\widehat{L}_1^\leftarrow(1)-)\od
2\widehat{L}_2(\widehat{L}_1^\leftarrow(1)),\quad n\to\infty,$$
where the last distributional equality is implied by the
independence.
\end{proof}

\subsection{The case $\beta=1$}

We start by proving several auxiliary results. {The
functions $a$, $\pi$ and $\pi^\ast$ appearing below are defined in
Section \ref{beta1}.}
\begin{lem}\label{lem:lln_renewal_if_beta=1}
Assume that ($\xi$) holds for $\beta=1$. Then, for every $T>0$,
$$\sup_{u\in
[0,\,T]}\left|\frac{\nu(tu)}{t\pi^{\ast}(t)}-u\right|~\topr~
0,\quad t\to\infty.$$ In particular,
\begin{equation}\label{eq:s_nu_beta_1}
\frac{S_{\nu(t)}}{t}~\topr~ 1,\quad t\to\infty.
\end{equation}
\end{lem}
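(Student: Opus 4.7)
The plan is a two-step reduction: first establish the weak law of large numbers for the partial sums, $S_n/(n\pi(n))\topr 1$, via Feller's classical WLLN for iid non-negative summands with regularly varying tail of index $-1$ and infinite mean; then invert to the renewal process and upgrade to uniform convergence by monotonicity.

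For the first step, Feller's theorem gives a normalization $b_n$ satisfying the self-consistency $b_n\sim nm(b_n)$ with $S_n/b_n\topr 1$. I would verify that $b_n=n\pi(n)=nm(a(n))$ is admissible using slow variation of $m$ together with the log-comparability of $n\pi(n)$ and $a(n)$ (both are regularly varying of index $1$ up to slowly varying factors). The argument is routine given the definitions of $a$, $m$ and $\pi$ from Section \ref{beta1}.

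For the inversion, fix $\varepsilon>0$ and use the identity $\{\nu(tu)>n\}=\{S_n\leq tu\}$ with $n_\pm=[(u\pm\varepsilon)t\pi^*(t)]$. The defining relation $\pi^*(s)\pi(s\pi^*(s))\to 1$ of the de~Bruijn conjugate yields $n_\pm\pi(n_\pm)\to(u\pm\varepsilon)t$, after noting that $\pi^*(tu)/\pi^*(t)\to 1$ by slow variation. Combined with the $S_n$-WLLN, this gives $\P\{\nu(tu)>n_+\}\to 0$ and $\P\{\nu(tu)\leq n_-\}\to 0$ for each fixed $u>0$. To upgrade to uniform convergence on $[0,T]$, I would discretize with a finite $\varepsilon$-grid and use monotonicity of $u\mapsto\nu(tu)$ against the continuous identity limit (a Polya--Dini-type argument); values of $u$ near $0$ are handled trivially since both sides are then small.

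For the ``in particular'' consequence $S_{\nu(t)}/t\topr 1$, the lower bound is immediate from $S_{\nu(t)}\geq t$. The upper bound uses $S_{\nu(t)}\leq t+\max_{k\leq\nu(t)}\xi_k$ together with the union bound
$$\P\Big\{\max_{k\leq\nu(t)}\xi_k>\varepsilon t\Big\}\leq\P\{\nu(t)>2t\pi^*(t)\}+2t\pi^*(t)\P\{\xi>\varepsilon t\}\sim 2\varepsilon^{-1}\pi^*(t)\ell(t),$$
the first term vanishing by the WLLN for $\nu$ just established. The main obstacle is showing $\pi^*(t)\ell(t)\to 0$; this reduces via the identification $\pi^*(t)\sim 1/m(t)$ (a consequence of the defining relation for $\pi^*$ and slow variation of $m$) to the Karamata-theoretic fact that divergence of $m(t)=\int_0^t u^{-1}\ell(u)\,du$ forces $\ell(t)/m(t)\to 0$. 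The overall proof is routine modulo careful bookkeeping with slowly varying functions and the de Bruijn conjugate.
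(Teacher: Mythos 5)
Your proof takes a genuinely different route from the paper's. The paper invokes the functional limit theorem of de~Haan and Resnick for the partial-sum process, $\bigl(S_{[t\cdot]}-t(\cdot)\pi(t)\bigr)/a(t)\overset{{\rm J_1}}{\Rightarrow}Y_1(\cdot)$, deduces $S_{[t\cdot]}/(t\pi(t))\overset{{\rm J_1}}{\Rightarrow}\mathrm{id}$, passes to $\nu$ via Whitt's continuity of first-passage inversion in the $M_1$ topology, and gets local uniformity ``for free'' from the continuity of the (deterministic) limit; the ``in particular'' assertion then falls out of the composition Lemma~\ref{whi}. You instead start from Feller's elementary one-dimensional WLLN for $S_n$, invert through the dual identity $\{\nu(tu)>n\}=\{S_n\leq tu\}$ with a two-sided $\varepsilon$-cushion, and upgrade to uniformity on $[0,T]$ by the standard monotonicity/discretization (P\'olya--Dini) argument; your ``in particular'' step uses the overshoot bound $0\leq S_{\nu(t)}-t\leq\max_{k\leq\nu(t)}\xi_k$ plus a union bound. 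Both routes are sound. The paper's buys brevity (all the uniformity and composition machinery is pre-packaged in cited results) at the cost of importing heavier functional-analytic technology; yours is more elementary and self-contained, but shifts the work into Karamata-theoretic bookkeeping.

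Two of the steps you label ``routine'' deserve more care than you give them. First, verifying that $b_n=n\pi(n)=nm(a(n))$ is an admissible normalizer in Feller's WLLN amounts to $m(n\pi(n))\sim m(a(n))$, and since $n\pi(n)/a(n)\sim m(a(n))/\ell(a(n))\to\infty$, slow variation alone does not close this; one needs the Karamata representation $m(x)=c(x)\exp\bigl(\int_1^x\eta(u)u^{-1}\,du\bigr)$ with $\eta(u)\sim\ell(u)/m(u)$ and the estimate $\tfrac{\ell}{m}\cdot\log\tfrac{m}{\ell}\to 0$, which does hold (being of the form $x\log(1/x)$ as $x\to 0$) but is not immediate. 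Second, the relation $\pi^{*}(t)\sim 1/m(t)$ is correct in the end but does not follow from ``the defining relation for $\pi^{*}$ and slow variation of $m$'' alone; the paper's own Lemma~\ref{lem:overshot_if_beta=1} establishes the one-sided bound $\limsup_t\pi^{*}(t)m(t)\leq 1$ by a probabilistic argument (Fatou applied to $\nu(n)/(n\pi^{*}(n))$ together with the renewal relation \eqref{ren1}), which already uses the main part of the present lemma. That one-sided bound is all you need for $\pi^{*}(t)\ell(t)\to 0$ (since $\ell/m\to 0$), so I would phrase your argument in terms of the $\limsup$ bound and cite a Fatou/renewal argument rather than assert the stronger two-sided equivalence by pure Karamata theory. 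With these two points tightened, your proof is a valid and more elementary alternative to the paper's.
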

\begin{proof}
It is known {(see, for instance, Example 2 on
p.~1034 in \cite{Haan+Resnick:1979})} that if $t\mapsto
\P\{\xi>t\}$ is regularly varying at $\infty$ of index $-1$, then
$$\frac{S_{[t\cdot]}-t(\cdot)\pi(t)}{a(t)}~{\overset{{\rm
J_1}}{\Rightarrow}}~ Y_1(\cdot),\quad t\to\infty$$ in the
$J_1$-topology on $D$, where $(Y_1(u))_{u\geq 0}$ is a $1$-stable
spectrally positive L\'{e}vy process. This yields
\begin{equation}\label{eq:inter}
\frac{S_{[t\cdot ]}}{t\pi(t)}~{\overset{{\rm
J_1}}{\Rightarrow}}~ f(\cdot),\quad t\to\infty,
\end{equation}
where $f(u)=u$ for $u\geq 0$. Using continuity of the inversion
(see \cite{Whitt:1971}) we deduce
$$\frac{\nu(t(\cdot)\pi(t))}{t}=\inf\left\{s\geq
0:\frac{S_{[ts]}}{t\pi(t)}>(\cdot)\right\}~{\overset{{\rm
M_1}}{\Rightarrow}}~ f(\cdot),\quad t\to\infty.$$ Since the limit
is continuous, the convergence is actually locally uniform, that
is, for every $T>0$,
$$\sup_{u\in [0,\,T]}\left|\frac{\nu(tu
\pi(t))}{t}-u\right|~\topr~0,\quad t\to\infty.$$ Replacing $t$
with $t\pi^{\ast}(t)$ and using
$\lim_{t\to\infty}\pi^{\ast}(t)\pi(t\pi^{\ast}(t))=1$ we obtain
the first claim of the lemma.

Relation \eqref{eq:s_nu_beta_1} follows from \eqref{eq:inter} with
$t\pi^\ast(t)$ replacing $t$ in combination with the first claim
and Lemma \ref{whi} with $k=1$.
\end{proof}

\begin{lem}\label{lem:overshot_if_beta=1}
Assume that ($\xi$) holds for $\beta=1$. Then
$$\frac{n-S_{\nu(n)-1}}{a(n\pi^{\ast}(n))}~\topr~ 0,\quad n\to\infty.$$
\end{lem}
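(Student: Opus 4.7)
I would estimate directly the tail $\P\{n-S_{\nu(n)-1}>\varepsilon\, a(n\pi^\ast(n))\}$ for arbitrary $\varepsilon>0$ and show it tends to zero. Writing $r_n:=a(n\pi^\ast(n))$ and $F(y):=\P\{\xi>y\}$, the usual first-passage decomposition yields
\[
\P\{n-S_{\nu(n)-1}>\varepsilon r_n\}=\int_{[0,\,n-\varepsilon r_n]}F(n-u)\,dU(u),
\]
where $U$ is the renewal function from~\eqref{eq:RenFunDf}; the whole task is to bound this integral.

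The ingredients at hand are: regular variation of $F$ of index $-1$ combined with $n\pi^\ast(n)F(r_n)\to 1$, giving $F(xr_n)\sim x^{-1}/(n\pi^\ast(n))$ for each fixed $x>0$; formula~\eqref{ren1} specialised to $\beta=1$, giving $U(t)\sim t/m(t)$; and the defining relation $\pi^\ast(n)\pi(n\pi^\ast(n))\to 1$ together with $\pi(t)=m(a(t))$, yielding the key identity $\pi^\ast(n)m(r_n)\to 1$. From the latter and the fact that $m(t)/\ell(t)\to\infty$ (a consequence of $\E\xi=\infty$), one obtains $r_n/n=\pi^\ast(n)\ell(r_n)\sim \ell(r_n)/m(r_n)\to 0$, so $r_n=o(n)$; in particular $r_n\leq n$ eventually.

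For a cutoff $L=L(n)\to\infty$ to be chosen, I would split the integral at $u=n-Lr_n$. On $[0,n-Lr_n]$ the integrand is at most $F(Lr_n)$, so the contribution is bounded by $F(Lr_n)U(n)\sim L^{-1}m(r_n)/m(n)\leq L^{-1}$, using monotonicity of $m$ and $r_n\le n$. On $(n-Lr_n,\,n-\varepsilon r_n]$ the integrand is at most $F(\varepsilon r_n)$, while subadditivity of $U$ gives $U(n)-U(n-Lr_n)\le U(Lr_n)\sim Lr_n/m(r_n)$; thus the contribution is of order $\varepsilon^{-1}Lr_n/(n\pi^\ast(n)m(r_n))\sim \varepsilon^{-1}Lr_n/n$, by $\pi^\ast(n)m(r_n)\to 1$. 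Taking for example $L(n)=(n/r_n)^{1/2}$ makes $L(n)\to\infty$ while $L(n)r_n/n=(r_n/n)^{1/2}\to 0$, so both pieces vanish.

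The main technical obstacle is keeping track of the cascade of slowly varying functions $\ell,m,\pi,\pi^\ast$ and verifying the auxiliary identities $\pi^\ast(n)m(r_n)\to 1$ and $r_n=o(n)$; once these are in hand, the tail estimate reduces to the routine two-region bound above, with no appeal to the branching-process machinery.
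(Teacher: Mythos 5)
Your proposal is correct in outline and takes a genuinely different route from the paper. The paper avoids direct tail computation: it applies the monotone slowly varying function $m$ to the undershoot, invokes Theorem~6 of Erickson (1970) to get $m(n-S_{\nu(n)-1})/m(n)\dod U(0,1)$, and then reduces the claim to the single deterministic limit $\liminf_{n\to\infty} m(\delta a(n\pi^\ast(n)))/m(n)\geq 1$, which it proves via Fatou's lemma and Lemma~\ref{lem:lln_renewal_if_beta=1}. Your proof is more elementary and self-contained: it writes the undershoot tail as $\int_{[0,n-\varepsilon r_n)}F(n-u)\,\ud U(u)$ and estimates it directly via \eqref{ren1}, the defining identity $\pi^\ast(n)m(r_n)\to 1$, and a two-region split. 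The price for elementarity is that you must juggle the cascade $\ell,m,\pi,\pi^\ast$ by hand rather than outsourcing it to Erickson's theorem.

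One technical point needs tightening: you use $\sim$ in two places where the argument of the slowly or regularly varying function involves the cutoff $L$, and then you take $L=L(n)\to\infty$. Regular variation gives $F(Lr_n)\sim L^{-1}F(r_n)$ and $U(Lr_n)\sim Lr_n/m(Lr_n)$ only uniformly for $L$ in compact subsets of $(0,\infty)$; with $L(n)\to\infty$ these asymptotics are not automatic. The fix is routine: replace them by Potter bounds (e.g.\ $F(Lr_n)\leq C\,L^{-1+\delta}F(r_n)$ for fixed $\delta\in(0,1)$ and $n$ large), and use the monotonicity $m(Lr_n)\geq m(r_n)$ to get $U(Lr_n)\leq (1+o(1))\,Lr_n/m(r_n)$. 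The first piece is then $\lesssim L^{-1+\delta}$ and the second $\lesssim \varepsilon^{-1}Lr_n/n$, and the choice $L=(n/r_n)^\theta$ with $\theta$ small enough makes both vanish. Your auxiliary identities $\pi^\ast(n)m(r_n)\to1$ and $r_n=o(n)$ (via $m/\ell\to\infty$, a consequence of Proposition~1.5.9a in \cite{bingham1989regular}) are correct and are exactly what is needed. With the Potter-bound patch, this is a complete, valid alternative proof.
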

\begin{proof}
Fix $\delta>0$ and write $$\P\{n-S_{\nu(n)-1}\geq \delta
a(n\pi^{\ast}(n))\}\leq
\P\left\{\frac{m(n-S_{\nu(n)-1})}{m(n)}\geq \frac{m(\delta
a(n\pi^{\ast}(n)))}{m(n)}\right\}.$$ By Theorem 6 in
\cite{Erickson:1970}, $$\lim_{n\to\infty}
\P\left\{\frac{m(n-S_{\nu(n)-1})}{m(n)}\leq x\right\}=x,\quad x\in
[0,1].$$ Thus, it is enough to show that
$$\liminf_{n\to\infty}\frac{m(\delta a(n\pi^{\ast}(n)))}{m(n)}\geq
1.$$ Since $m$ is slowly varying, this limit relation is
equivalent to
$$\liminf_{n\to\infty}\frac{m(a(n\pi^{\ast}(n)))}{m(n)}=
\liminf_{n\to\infty}\frac{\pi(n\pi^{\ast}(n))}{m(n)}=\liminf_{n\to\infty}\frac{1}{\pi^{\ast}(n)m(n)}\geq
1.$$ The last inequality follows by an application of Fatou's
lemma together with Lemma \ref{lem:lln_renewal_if_beta=1}:
$$1=\E \lim_{n\to\infty}\frac{\nu(n)}{n\pi^{\ast}(n)}\leq
\liminf_{n\to\infty}\frac{U(n)}{n\pi^{\ast}(n)}=\liminf_{n\to\infty}\frac{U(n)m(n)}{n}\frac{1}{m(n)\pi^{\ast}(n)}
=\liminf_{n\to\infty}\frac{1}{m(n)\pi^{\ast}(n)},$$ where the
first limit is understood as the limit in probability, $U$ denotes
the renewal function (see \eqref{eq:RenFunDf}), and the last
equality follows from \eqref{ren1}.
\end{proof}

Lemma \ref{negligible1} given next is a counterpart of Lemma
\ref{negligible(01)}.
\begin{lem}\label{negligible1}
Assume that $\E\log\rho\in [-\infty, 0)$ and that ($\xi$) holds
{for $\beta=1$}.
\begin{itemize}
\item[(i)] If  ($\xi\rho 1$) holds, and $\E\rho^\gamma<\infty$ for some $\gamma>1/2$, then
\begin{equation}\label{inter_beta_1}
a(n\pi^{\ast}(n))^{-2}Y_n ~\topr~0,\quad n\to\infty.
\end{equation}
\item[(ii)] If  ($\xi\rho2$) holds for some $\alpha\leq 1/2$ and $\E\rho^\gamma<\infty$ for some $\gamma>\alpha$, then
\begin{equation}\label{inter001}
(n\pi^{\ast}(n))^{-1/\alpha}Y_n~\topr~0,\quad n\to\infty.
\end{equation}
\end{itemize}
\end{lem}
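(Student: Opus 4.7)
The plan is to mirror the structure of the proof of Lemma~\ref{negligible(01)}, substituting the $\beta=1$ normalizing sequences $A_n=a(n\pi^{\ast}(n))^2$ in part~(i) and $A_n=(n\pi^{\ast}(n))^{1/\alpha}$ in part~(ii). In both cases I will first argue that it suffices to prove $A_n^{-1}\E_\omega Y_n\topr 0$ under $P$: Markov's quenched inequality yields $\P_\omega\{Y_n>\varepsilon A_n\}\leq \varepsilon^{-1}A_n^{-1}\E_\omega Y_n$, and the bounded convergence theorem then gives $\P\{Y_n>\varepsilon A_n\}=\E\P_\omega\{Y_n>\varepsilon A_n\}\to 0$, exactly as in the proof of Lemma~\ref{negligible(01)}. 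Lemma~\ref{lem:quenchedY} will then split $\E_\omega Y_n$ into the two summands $(n-S_{\nu(n)-1})\rho_1\cdots\rho_{\nu(n)}$ and $(n-S_{\nu(n)-1})\rho_{\nu(n)}(\xi_1\rho_2\cdots\rho_{\nu(n)-1}+\cdots+\xi_{\nu(n)-1})$, which I will handle separately.

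For the first summand I will use the crude bound $n-S_{\nu(n)-1}\leq n$ together with the observation that $A_n/n\to\infty$ by regular variation: $a(n\pi^{\ast}(n))^2$ is regularly varying of index $2$ in $n$ (since $a$ has index $1/\beta=1$ and $\pi^{\ast}$ is slowly varying), while $(n\pi^{\ast}(n))^{1/\alpha}$ has index $1/\alpha\geq 2$ under the hypothesis $\alpha\leq 1/2$. The factor $\rho_1\cdots\rho_{\nu(n)}$ tends to $0$ almost surely because $\E\log\rho<0$ and $\nu(n)\to\infty$ a.s., so the first summand divided by $A_n$ vanishes almost surely in both parts.

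For the second summand I will factor it as
\[
\frac{n-S_{\nu(n)-1}}{B_n}\cdot \frac{\rho_{\nu(n)}}{C_n}\cdot\bigl(\xi_1\rho_2\cdots\rho_{\nu(n)-1}+\cdots+\xi_{\nu(n)-1}\bigr),
\]
choosing $(B_n,C_n)=(a(n\pi^{\ast}(n)),a(n\pi^{\ast}(n)))$ in part~(i) and $(B_n,C_n)=(n,c_2(n))$ in part~(ii), so that $B_nC_n=A_n$ in each case. In part~(i) the first factor is $o_{\P}(1)$ by Lemma~\ref{lem:overshot_if_beta=1} and the second by Lemma~\ref{lem:rhonu}(i) (whose hypothesis $\gamma>\beta/2=1/2$ is exactly what is assumed); in part~(ii) the first factor is bounded by $1$ and the second is $o_{\P}(1)$ by Lemma~\ref{lem:rhonu}(ii). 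For the third factor I will reproduce the exchangeability trick from the proof of Lemma~\ref{negligible(01)}: decomposing on $\{\nu(n)=k\}$ and reversing the order of $(\xi_1,\ldots,\xi_{k-1})$ jointly with $(\rho_1,\ldots,\rho_{k-1})$ shows that the sum is stochastically dominated by the series $\sum_{k\geq 1}\xi_k\rho_1\cdots\rho_{k-1}$, which is $P$-a.s.\ finite because $\E\log\rho<0$ and $\E\log\xi<\infty$ force its terms to decay geometrically.

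The chief technical point is therefore just verifying that each normalizer $A_n$ dominates the crude overshoot bound $n$; once this is in hand, the argument is a direct transcription of the $\beta\in(0,1)$ case. No new inputs are needed beyond Lemmas~\ref{lem:quenchedY}, \ref{lem:rhonu} (whose statement already covers $\beta=1$ through the definition of $c_i$ in \eqref{eq: cc}) and~\ref{lem:overshot_if_beta=1}.
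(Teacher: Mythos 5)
Your proposal is correct and spells out precisely what the paper intends when it omits the proof with the remark that it follows the same lines as Lemma~\ref{negligible(01)}. Your choice of factorizations is apt; in particular you correctly identify that for $\beta=1$ in part~(i) the crude bound $n-S_{\nu(n)-1}\leq n$ no longer suffices on its own (since $a(n\pi^\ast(n))/n$ is merely slowly varying and need not diverge), and that one must instead invoke Lemma~\ref{lem:overshot_if_beta=1} to make the overshoot factor $o_\P(1)$ against $a(n\pi^\ast(n))$ — this is the one genuinely new ingredient compared with the $\beta\in(0,1)$ case, and you supply it. The remaining pieces (the quenched Markov reduction, Lemma~\ref{lem:quenchedY}, the a.s.\ vanishing of $\rho_1\cdots\rho_{\nu(n)}$, the stochastic domination of the series by $\sum_{k\ge1}\xi_k\rho_1\cdots\rho_{k-1}$, and Lemma~\ref{lem:rhonu} with $c_1,c_2$ as in \eqref{eq: cc}) are transcribed faithfully and with the correct bookkeeping $B_nC_n=A_n$ in both parts.
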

The proof is omitted, for it follows along the same lines as the
proof of Lemma~\ref{negligible(01)}.

\begin{proof}[Proof of Theorem \ref{thm:main3T}]
Recall that $\beta=1$. If condition $(\rho1)$ holds with
$\alpha=\beta/2=1/2$, then Lemma~\ref{negligible1}(i) applies with
$\gamma>1/2$ as defined in $(\rho1)$. If condition $(\rho2)$ holds
with $1/2\in \mathcal{I}$, then  Lemma \ref{negligible1}(i)
applies with any $\gamma>1/2$ such that $\gamma\in \mathcal{I}$.
Thus, in any event, \eqref{inter_beta_1} holds.

Using once again decomposition \eqref{decomp} we conclude that
\eqref{Tn_beta_1} is a consequence of
\begin{equation}\label{weak10_beta_1}
\frac{\Big(S_{\nu(n)-1}+2\sum_{i=0}^{S_{\nu(n)-1}}U_i^{(S_{\nu(n)-1})}+(n-S_{\nu(n)-1})+2\sum_{i=S_{\nu(n)-1}+1}^n
U_i^{(n)}\Big)}{a(n\pi^{\ast}(n))^2}~\dod~ 2L_2(1)
\end{equation}
as $n\to\infty$. In view of $S_{\nu(n)-1}\leq n$ $\P$-a.s.,
\begin{equation}\label{under_beta_1}
\frac{S_{\nu(n)-1}}{a(n\pi^{\ast}(n))^2}\topr 0,\quad n\to\infty
\end{equation}
because the denominator is regularly varying at $\infty$ of index
$2$.

From the proof of Theorem \ref{thm:main11T} we know that
distributional equality \eqref{equ} holds. Hence, it is enough to
show that
\begin{equation}\label{weak9_beta_1}
\frac{\sum_{k=1}^{S_{\nu(n)-1}}Z_k}{a(n\pi^{\ast}(n))^2}~\dod~L_2(1),\quad
n\to\infty
\end{equation}
and
\begin{equation}\label{weak9_beta_11}
\frac{T^\prime_{n-S_{\nu(n)-1}}}{a(n\pi^{\ast}(n))^2}~\topr~0,\quad
n\to\infty.
\end{equation}
We first prove~\eqref{weak9_beta_11}. Using \eqref{weak8} in
combination with $n-S_{\nu(n)-1} \topr +\infty$ as $n\to\infty$
and the independence of $(T_k^\prime)_{k\in\N_0}$ and
$n-S_{\nu(n)-1}$ we infer
$$\frac{T^{\prime}_{n-S_{\nu(n)-1}}}{(n-S_{\nu(n)-1})^2}~\dod~ {2} \vartheta,\quad n\to\infty.$$
With this at hand, \eqref{weak9_beta_11} follows from Lemma
\ref{lem:overshot_if_beta=1}.

In order to prove \eqref{weak9_beta_1} note that in formula
\eqref{weak4} we still have convergence of the second components,
that is,
$$\frac{\sum_{k=1}^{\tau^\ast_{[n\cdot]}}\overline{\W}_{\tau_k}}{a(n)^2}~{\overset{{\rm
J_1}}{\Rightarrow}}~ L_2(\cdot),\quad n\to\infty$$ or,
equivalently,
$$\frac{\sum_{k=1}^{\tau^\ast_{[n\pi^{\ast}(n)(\cdot)]}}\overline{\W}_{\tau_k}}{a(n\pi^{\ast}(n))^2}~{\overset{{\rm
J_1}}{\Rightarrow}}~ L_2(\cdot),\quad n\to\infty.$$ By Lemma
\ref{lem:lln_renewal_if_beta=1},
$$\frac{\nu(n\cdot)-1}{n\pi^{\ast}(n)}~{\overset{{\rm
J_1}}{\Rightarrow}}~ f(\cdot),\quad n\to\infty,$$ where $f(t)=t$
for $t\geq 0$. Using once again Lemma \ref{whi} with $k=1$ we
infer
$$\frac{\sum_{k=1}^{\tau^\ast_{\nu(n)-1}}\overline{\W}_{\tau_k}}{a(n\pi^{\ast}(n))^2}~\dod~L_2(1),\quad
n\to\infty.$$ The same limit relation holds with $\nu(n)$
replacing $\nu(n)-1$. In view of \eqref{eq:upper_and_lower_bounds}
we arrive at \eqref{weak9_beta_1}.
\end{proof}

\begin{proof}[Proof of Theorem~\ref{thm:main3T1} ]
As before, we only focus on the formula involving $T_n$, that is,
\eqref{Tn2_beta_1}. The proof {of \eqref{Tn2_beta_1}} is similar
to but much simpler than the proof of Theorem \ref{thm:main2T}. In
view of this, we only give a sketch.

According to the proof of Theorem \ref{thm:main2T} the last
summand in \eqref{decomp} is bounded in probability. Under the
assumptions of Theorem \ref{thm:main3T1} the conditions of Lemma
\ref{negligible1}(ii) are satisfied, whence
\begin{equation}\label{inter1_beta_1}
(n\pi^{\ast}(n))^{-1/\alpha}\sum_{j=0}^{S_{\nu(n)-1}}\Big(U_j^{(n)}-U_j^{(S_{\nu(n)-1})}\Big)~\topr~0,\quad
n\to\infty
\end{equation}
Further,
$$\frac{S_{\nu(n)-1}}{(n\pi^{\ast}(n))^{1/\alpha}}~\topr~0,\quad n\to\infty$$
because $S_{\nu(n)-1}\leq n$ $\P$-a.s.\ and the denominator is
regularly varying at $\infty$ of index $1/\alpha \geq 2$. In view
of \eqref{weak100000}
\begin{equation*}
\frac{\sum_{k=1}^{\tau^\ast_{[n\pi^{\ast}(n)(\cdot)]}}\overline{\W}_{\tau_k}}{(n\pi^{\ast}(n))^{1/\alpha}}~{\overset{{\rm
J_1}}{\Rightarrow}}~L_2(\cdot), \quad n\to\infty.
\end{equation*}
As in the proof of Theorem \ref{thm:main3T}, an appeal to
$J_1$-continuity of the composition and Lemma
\ref{lem:lln_renewal_if_beta=1} enables us to conclude that
$$\frac{\sum_{k=1}^{\tau^\ast_{\nu(n)-1}}\overline{\W}_{\tau_k}}{(n\pi^{\ast}(n))^{1/\alpha}}~{\overset{{\rm
J_1}}{\Rightarrow}}~L_2(\cdot), \quad n\to\infty,$$ and that its
counterpart holds with $\nu(n)$ replacing $\nu(n)-1$. Finally, we
claim that
$$\frac{T^{\prime}_{n-S_{\nu(n)-1}}}{(n\pi^{\ast}(n))^{1/\alpha}}~\topr~0,\quad n\to\infty.$$ Indeed, this is a consequence of
\eqref{weak8}, Lemma \ref{lem:overshot_if_beta=1} and the limit
relation
$$\lim_{n\to\infty}\frac{a(n\pi^{\ast}(n))^2}{(n\pi^{\ast}(n))^{1/\alpha}}=0$$ that we are now going to prove. When $\alpha<1/2$, the latter holds,
for the function $a$ is then regularly varying at $\infty$ of
index $1<1/(2\alpha)$. When $\alpha=1/2$, we have
$\lim_{t\to\infty}\ell(t)=0$ by assumption. This entails
$a(t)=o(t)$ as $t\to\infty$, and the limit relation follows.
\end{proof}

\section*{Acknowledgment}

The authors thank Vitali Wachtel for bringing the article
\cite{Korshunov:2009} to their attention. D. Buraczewski and P.
Dyszewski were partially supported by the National Science Center,
Poland (Sonata Bis, grant number DEC-2014/14/E/ST1/00588).

\addcontentsline{toc}{section}{References}
\bibliographystyle{plain}

\vspace{1cm}

\footnotesize

\textsc{Dariusz Buraczewski and Piotr Dyszewski:} Mathematical
Institute, University of Wroclaw, 50-384 Wroclaw, Poland\\
\textit{E-mail}: \texttt{dbura@math.uni.wroc.pl;\
pdysz@math.uni.wroc.pl}

\bigskip

\textsc{Alexander Iksanov and Alexander Marynych:} Faculty of Computer Science and Cybernetics, Taras Shev\-chen\-ko National University of Kyiv,
01601 Kyiv, Ukraine\\
\textit{E-mails}: \texttt{iksan@univ.kiev.ua; \
marynych@unicyb.kiev.ua}

\end{document}